\theoremstyle{plain} 
    \newtheorem{theorem}{Theorem}[section]
    \newtheorem*{theorem*}{Theorem}
    \newtheorem{corollary}[theorem]{Corollary}
    \newtheorem{lemma}[theorem]{Lemma}
    \newtheorem{proposition}[theorem]{Proposition}
\theoremstyle{definition}
    \newtheorem{definition}[theorem]{Definition}
\theoremstyle{remark}    
    \newtheorem{remark}[theorem]{Remark}
    \newtheorem{claim}{Claim}[theorem]
\newcommand{\range}{\operatorname{range}}
\newcommand{\satisfies}{\models}
\newcommand{\crit}{\operatorname{crit}}
\newcommand{\rank}{\operatorname{rank}}
\newcommand{\ZFC}{\mathrm{ZFC}}
\newcommand{\ZF}{\mathrm{ZF}}
\newcommand{\rBC}{\mathrm{rBC}}
\newcommand{\BC}{\mathrm{BC}}
\newcommand{\OR}{\mathrm{OR}}
\newcommand{\ot}{\operatorname{ot}}
\newcommand{\dbar}[1]{\bar{\bar #1}}
\newcommand{\cof}{\operatorname{cof}}
\newcommand{\VP}{\mathrm{VP}}
\newcommand{\boldVP}{\mathrm{\mathbb{VP}}}
\newcommand{\con}[1]{\operatorname{Con}(#1)}
\newcommand{\AC}{\mathrm{AC}}
\setlist[enumerate]{label={\upshape(\roman*)},noitemsep}
\newcolumntype{C}{>{\centering\arraybackslash$}X<{$}}
\newcolumntype{M}{>{\centering\arraybackslash$}p{.35cm}<{$}}
\DeclareFontFamily{OMX}{MnSymbolE}{}
\DeclareSymbolFont{MnLargeSymbols}{OMX}{MnSymbolE}{m}{n}
\DeclareFontShape{OMX}{MnSymbolE}{m}{n}{
    <-6>  MnSymbolE5
   <6-7>  MnSymbolE6
   <7-8>  MnSymbolE7
   <8-9>  MnSymbolE8
   <9-10> MnSymbolE9
  <10-12> MnSymbolE10
  <12->   MnSymbolE12
}{}
\DeclareFontShape{OMX}{MnSymbolE}{b}{n}{
    <-6>  MnSymbolE-Bold5
   <6-7>  MnSymbolE-Bold6
   <7-8>  MnSymbolE-Bold7
   <8-9>  MnSymbolE-Bold8
   <9-10> MnSymbolE-Bold9
  <10-12> MnSymbolE-Bold10
  <12->   MnSymbolE-Bold12
}{}
\DeclareMathDelimiter{\ulcorner}
    {\mathopen}{MnLargeSymbols}{'036}{MnLargeSymbols}{'036}
\DeclareMathDelimiter{\urcorner}
    {\mathclose}{MnLargeSymbols}{'043}{MnLargeSymbols}{'043}
\title{Berkeley Cardinals and Vop\v enka's Principle}
\author{Marwan Salam Mohammd
%\thanks{The author thanks Joan Bagaria, M. Catalina Torres, and David Asper\'o for many helpful discussions and feedback on initial drafts of this paper.}
\\
    \texttt{marwan.mizuri@gmail.com}
    }
\begin{document}

\maketitle
%\tableofcontents

\begin{abstract}
    We introduce ``$n$-choiceless'' supercompact and extendible cardinals in Zermelo-Fraenkel set theory without the Axiom of Choice. We prove relations between these cardinals and Vop\v enka's Principle similar to those of Bagaria's work in ``$C^{(n)}$-Cardinals'' and ``More on the Preservation of Large Cardinals Under Class Forcing.'' We use these relations to characterize Berkeley cardinals in terms of a restricted form of Vop\v enka's Principle. Finally, we determine the consistency strength of some relevant theories that arise.
    %\blfootnote{This paper forms part of the authors PhD thesis under the supervision of Professor Joan Bagaria at the University of Barcelona.}
    %\blfootnote{The author thanks Joan Bagaria, M. Catalina Torres, and David Asper\'o for many helpful discussions and feedback on initial drafts of this paper.}
    %\blfootnote{The author acknowledges partial support from the Generalitat de Catalunya (Catalan Government) under grant 2021 SGR 00348 and from the Spanish Government under grants MTM-PID2020-116773GBI00, PID2023-147428NB-I00, and EUR2022-134032.}
\end{abstract}

%%%%%%%%%%%%%%%%%%%%%%%%%%%%%%  BODY  %%%%%%%%%%%%%%%%%%%%%%%%%%%%%%%%%%%%%%%%%%%%%%%%%%%%%

\section{Introduction}\label{section_intro}

In \citeyear{reinhardt}, William N. Reinhardt introduced in his PhD dissertation the large cardinal notion now called a Reinhardt cardinal \cite{reinhardt}. A cardinal is called a \emph{Reinhardt cardinal} iff it is the critical point of an elementary embedding $j:V\rightarrow V.$ Soon after, Kenneth Kunen \cite{kunen} found an inconsistency of that notion with $\ZFC,$ using the Axiom of Choice ($\AC$) in his proof. The question remains open to this day whether $\AC$ is absolutely necessary for this refutation.

\begin{theorem}[\citeauthor{kunen} \cite{kunen}]
There is no nontrivial elementary embedding $j:V\rightarrow V.$
\end{theorem}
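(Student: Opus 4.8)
The plan is to argue by contradiction and to isolate the one place where the Axiom of Choice enters, namely the existence of a suitable Jónsson-type function. So suppose toward a contradiction that $j : V \to V$ is a nontrivial elementary embedding, and let $\kappa = \crit(j)$ be its critical point. First I would build the \emph{critical sequence} by setting $\kappa_0 = \kappa$ and $\kappa_{n+1} = j(\kappa_n)$, then put $\lambda = \sup_n \kappa_n$. Two facts get recorded immediately: $\lambda$ has cofinality $\omega$ (the $\kappa_n$ strictly increase), and $j(\lambda) = \sup_n j(\kappa_n) = \sup_n \kappa_{n+1} = \lambda$, so $j$ fixes $\lambda$. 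I would also note that $\kappa \notin j''\lambda$: if $\kappa = j(\alpha)$ then $\alpha < \kappa$ is impossible (since $j(\alpha)=\alpha$), while $\alpha \ge \kappa$ forces $j(\alpha) \ge j(\kappa) > \kappa$.

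The combinatorial heart of the argument — and the only step that uses $\AC$ — is the Erdős–Hajnal theorem, which furnishes an $\omega$-\emph{Jónsson function} $f : [\lambda]^\omega \to \lambda$, meaning a function whose restriction to $[A]^\omega$ is surjective onto $\lambda$ for every $A \subseteq \lambda$ with $|A| = \lambda$. I expect this to be the main obstacle, both because it requires a genuine choice-theoretic construction and because it is precisely the ingredient whose eliminability is the open question flagged above.

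With $f$ in hand, I would invoke elementarity: since ``$f$ is an $\omega$-Jónsson function for $\lambda$'' is first-order in the parameters $f$ and $\lambda$, and $j(\lambda)=\lambda$, the image $j(f)$ is an $\omega$-Jónsson function for $\lambda$ as well. Now set $A = j''\lambda$, so that $|A| = \lambda$ because $j$ is injective. The key observation is that every countable subset $s$ of $A$ already lies in $\range(j)$: writing $s = \{\, j(\alpha) : \alpha \in t \,\}$ for the countable set $t = \{\, j^{-1}(\xi) : \xi \in s \,\} \subseteq \lambda$, and using $|t| = \omega < \kappa = \crit(j)$, one gets $j(t) = j''t = s$. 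Hence for every $s \in [A]^\omega$,
\[
 j(f)(s) = j(f)\bigl(j(t)\bigr) = j\bigl(f(t)\bigr) \in j''\lambda = A .
\]

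Finally I would extract the contradiction. Every value of $j(f)$ on $[A]^\omega$ lands in $A$, yet $\kappa \notin A$, so the restriction of $j(f)$ to $[A]^\omega$ cannot be surjective onto $\lambda$. But $A \subseteq \lambda$ has size $\lambda$, and the Jónsson property of $j(f)$ demands exactly that surjectivity. This contradiction shows that no nontrivial elementary embedding $j : V \to V$ can exist.
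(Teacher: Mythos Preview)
Your proof is correct and follows the standard textbook argument via $\omega$-J\'onsson functions (essentially the presentation in Kanamori's \emph{The Higher Infinite}). Note, however, that the paper does not supply its own proof of this theorem: it appears in the introduction purely as a cited background result of Kunen, with no argument given, so there is no in-paper proof to compare your proposal against.
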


In his Berkeley set theory graduate course around 1990, W. Hugh Woodin introduced the concept of a Berkeley cardinal (Definition~\ref{def:berkeley}), a notion stronger than a Reinhardt cardinal, as an exercise for his students to explore potential inconsistencies. But, despite the passage of almost half a century, no inconsistencies have been found. Reinhardt cardinals, Berkeley cardinals, and a few other variations on these two large cardinal notions are the topics of the article ``Large Cardinals Beyond Choice'' \cite{lcbc} by Joan Bagaria, Peter Koellner, and Woodin.

In this paper, we relate Berkeley cardinals to a very well known large cardinal notion called Vop\v enka's Principle. Vop\v enka's Principle, $\boldVP,$ states that for any proper class of structures of the same type, there exist two distinct members in the class such that one is elementarily embeddable into the other. The precise formulation of this notion will be given in Section~\ref{section_vopenka_and_choiceless_extendibles}.

Since we are working with Berkeley cardinals, our background theory will be $\ZF$ without $\AC,$ unless otherwise stated. Let $\boldVP^{\omega}$ denote $\boldVP$ restricted to proper classes of structures of the same finite type. The main results of this paper are facts in $\ZF + \neg\boldVP:$

\theoremstyle{plain}
\newtheorem*{thm:vp_to_berkeley_levels}{Theorem~\ref{vp_to_berkeley_levels}}
\begin{thm:vp_to_berkeley_levels}[$\ZF + \neg\boldVP$]
    If $\boldVP^\omega$ holds, then there is an ordinal $\delta>\omega$ for which $\boldVP^\delta$ holds while $\boldVP^{\delta+1}$ fails, and moreover, $\delta$ is a Berkeley cardinal.
\end{thm:vp_to_berkeley_levels}

\newtheorem*{cor:vp_to_berkeley_levels_sup}{Corollary~\ref{cor:vp_to_berkeley_levels_sup}}
\begin{cor:vp_to_berkeley_levels_sup}[$\ZF + \neg\boldVP$]
    If $\boldVP^\omega$ holds, then
    $$
    \sup\{\mu\mid \boldVP^\mu\textrm{ holds}\}=\sup\{\delta\mid \delta \text{ is Berkeley}\}.
    $$
\end{cor:vp_to_berkeley_levels_sup}

These results raise questions about the consistency strength of $\ZF+ \neg\boldVP+\boldVP^{\omega}.$ In regards to this, we prove the following:

\newtheorem*{thm:equiconsistency_theories_berkeley_and_finite_vp}{Theorem~\ref{equiconsistency_theories_berkeley_and_finite_vp}}
\begin{thm:equiconsistency_theories_berkeley_and_finite_vp}
    The theory $\ZF+\BC$ is equiconsistent with $\ZF+\boldVP^\omega+\neg\boldVP.$
\end{thm:equiconsistency_theories_berkeley_and_finite_vp}

Rank-Berkeley cardinals (Definition~\ref{def:rank_berkeley}) are a natural weakening of Berkeley cardinals first discovered by Farmer Schlutzenberg and Woodin, independently, when they realized that their existence follows from the existence of a Reinhardt cardinal. We will establish analogues of theorems \ref{vp_to_berkeley_levels}, \ref{cor:vp_to_berkeley_levels_sup}, and \ref{equiconsistency_theories_berkeley_and_finite_vp} for rank-Berkeley cardinals as well. As an application of that, we get the corollary below.
\newtheorem*{cor:weakest_vp_to_strongest_vp}{Corollary~\ref{weakest_vp_to_strongest_vp}}
\begin{cor:weakest_vp_to_strongest_vp}[$\ZFC$]
    $\boldVP$ restricted to definable, without parameters, classes of structures of the same finite type implies (and hence is equivalent to) $\boldVP.$
\end{cor:weakest_vp_to_strongest_vp}
\noindent That is, under $\AC,$ the weakest possible form of $\boldVP$ is equivalent to its strongest form. We are not aware if there is a more direct proof of this result.

In \citetitle{cn-cardinals} \cite{cn-cardinals}, Bagaria establishes an exact relation between $\boldVP$ and what he calls $C^{(n)}$\-/extendible cardinals (see Definition~\ref{cn_extendibles} for an equivalent definition).
\begin{theorem}[\citeauthor{cn-cardinals} \cite{cn-cardinals}]\label{bagaria_cn_extendibles_and_vp}
    Assuming $\AC,$ for $n\geq 1,$ the following are equivalent:
    \begin{enumerate}
        \item $\boldVP$ restricted to $\Pi_{n+1}$-definable, with parameters, classes of structures.
        \item There is a proper class of $C^{(n)}$\-/extendible cardinals.
    \end{enumerate}
\end{theorem}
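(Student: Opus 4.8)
The plan is to prove the two implications separately, with the L\'evy reflection theorem for the classes $C^{(n)} = \{\alpha : V_\alpha \prec_{\Sigma_n} V\}$ as the main technical tool. The two facts I will lean on are: a $\Pi_{n+1}$ formula is downward absolute from $V$ to $V_\alpha$ for every $\alpha \in C^{(n)}$, and a $\Sigma_{n+1}$ statement with parameters in $V_\alpha$ holds in $V$ if and only if it holds in some (equivalently, every large enough) $V_\alpha$ with $\alpha \in C^{(n)}$. Throughout I use the embedding formulation of $C^{(n)}$-extendibility: $\kappa$ is $C^{(n)}$-extendible iff for every $\lambda > \kappa$ there is an elementary $j : V_\lambda \to V_\mu$ with $\crit(j) = \kappa$, $j(\kappa) > \lambda$, and $j(\kappa) \in C^{(n)}$.

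For the direction from (ii) to (i), let $\mathcal C$ be a proper class of structures of a fixed type, defined by a $\Pi_{n+1}$ formula $\psi(x,p)$, and fix a $C^{(n)}$-extendible cardinal $\kappa$ with $\rank(p) < \kappa$. Since $\mathcal C$ is proper I may choose $B \in \mathcal C$ with $\rank(B) > \kappa$ and some $\lambda \in C^{(n)}$ with $\lambda > \rank(B)$; downward absoluteness then gives $V_\lambda \models \psi(B,p)$. Applying $C^{(n)}$-extendibility I obtain $j : V_\lambda \to V_\mu$ with $\crit(j) = \kappa$ and $j(\kappa) \in C^{(n)}$, so that $j\restriction B : B \to j(B)$ is an elementary embedding of structures, $B \neq j(B)$ because $\rank(B) > \crit(j)$, and $V_\mu \models \psi(j(B),p)$ by elementarity. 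It then remains to transfer $\psi(j(B),p)$ back up to $V$, so that $j(B)$ is a genuine member of $\mathcal C$ and the pair $B, j(B)$ witnesses the conclusion; this transfer is exactly where the pairing of the complexity $\Pi_{n+1}$ with the class $C^{(n)}$ is used, and where I will have to choose $\lambda$ and arrange the target $V_\mu$ (via $j(\kappa) \in C^{(n)}$) carefully enough for the defining formula to reflect back.

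For the direction from (i) to (ii) I argue by contraposition: suppose the $C^{(n)}$-extendible cardinals are bounded by an ordinal $\rho$. For each $\alpha \in C^{(n)}$ with $\alpha > \rho$ I form the structure $\mathfrak A_\alpha = \langle V_\alpha, \in, C^{(n)} \cap \alpha\rangle$ and let $\mathcal K = \{\mathfrak A_\alpha : \alpha \in C^{(n)},\ \alpha > \rho\}$. Since ``$\alpha \in C^{(n)}$'' is $\Pi_n$ and demanding that the distinguished predicate be exactly $C^{(n)} \cap \alpha$ contributes one more universal quantifier, $\mathcal K$ is definable by a $\Pi_{n+1}$ formula with parameter $\rho$, and it is visibly a proper class of structures of the same type. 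Applying $\boldVP$ for $\Pi_{n+1}$-definable classes yields distinct $\alpha < \alpha'$ and an elementary $e : \mathfrak A_\alpha \to \mathfrak A_{\alpha'}$, which reads off as an elementary $j : V_\alpha \to V_{\alpha'}$ that preserves the $C^{(n)}$-predicate. I then expect to show that $\kappa_0 := \crit(j)$ may be taken above $\rho$ and that preservation of the predicate forces $j(\kappa_0) \in C^{(n)}$; letting $\alpha$ range over the proper class $C^{(n)} \setminus (\rho + 1)$ and collecting the resulting embeddings witnesses that $\kappa_0$ is $C^{(n)}$-extendible, contradicting $\kappa_0 > \rho$.

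The step I expect to be the main obstacle is the upward transfer in the first direction: a $\Pi_{n+1}$ property is not in general preserved from $V_\mu$ to $V$, so establishing $V \models \psi(j(B),p)$ is delicate and is precisely what fixes the index $n+1$ opposite $C^{(n)}$. I anticipate handling it either through the reformulation of $\boldVP$ for $\Pi_{n+1}$ classes as $\boldVP$ for $\Sigma_{n+2}$ classes, whose defining statements admit a L\'evy normal form that is upward absolute from any sufficiently correct $V_\mu$, or by strengthening the extendibility embedding so that its target rank lies in $C^{(n+1)}$; a secondary difficulty is verifying in the second direction that the reflected embeddings genuinely assemble into full $C^{(n)}$-extendibility of a single cardinal above $\rho$.
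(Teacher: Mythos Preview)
The paper does not give its own proof of this statement: Theorem~\ref{bagaria_cn_extendibles_and_vp} is quoted from Bagaria's \cite{cn-cardinals} as background. The closest thing in the paper is the $\ZF$-analogue, Theorem~\ref{equivalence_of_vp_and_n_choiceless_extendibles}, for $n$-choiceless extendibles. Comparing your sketch with that proof is still instructive, and it exposes two genuine gaps in your outline.

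For (ii)$\Rightarrow$(i), your worry about the upward transfer of $\psi(j(B),p)$ from $V_\mu$ to $V$ is exactly the obstruction, and neither of your proposed fixes is what the paper does. In the proof of Theorem~\ref{equivalence_of_vp_and_n_choiceless_extendibles}(ii)$\Rightarrow$(iii) the paper passes through the supercompactness reformulation (Theorem~\ref{equivalence_of_choiceless_extendibles_and_choiceless_supercompacts}): instead of pushing $B$ forward and worrying about upward absoluteness, one reflects $B$ \emph{downward} via $j:V_{\bar\lambda}\to V_\lambda$ with $\bar\lambda\in C^{(n+1)}$ and $j(\bar B)=B$. Then $\bar B\in\mathcal C$ follows by downward absoluteness of $\Pi_{n+1}$ at $\bar\lambda$, and $j\vert_{\bar B}:\bar B\to B$ is the desired embedding. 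This completely sidesteps the issue you flag.

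For (i)$\Rightarrow$(ii), your argument has a more serious gap. A single application of $\boldVP$ to your class $\mathcal K$ yields \emph{one} pair $\alpha<\alpha'$ and one embedding $j:V_\alpha\to V_{\alpha'}$; you do not get to choose $\alpha$, and there is no mechanism for ``letting $\alpha$ range'' or ``collecting the resulting embeddings'' with a common critical point. (You also have not added constants for ordinals below $\rho$, so $\crit j>\rho$ is not forced; and ``preservation of the predicate forces $j(\kappa_0)\in C^{(n)}$'' only works once you know $\kappa_0\in C^{(n)}$, which you have not argued.) The paper's construction in Theorem~\ref{equivalence_of_vp_and_n_choiceless_extendibles}(i)$\Rightarrow$(ii) is designed precisely to avoid this: one first assumes no $\gamma>\alpha$ is $\alpha$-$n$-choiceless extendible, encodes that failure into a club class $D(\alpha,n)$, and then takes structures $(V_\beta,D(\alpha,n)\cap\beta,\xi)_{\xi\leq\alpha}$ with $\beta\in D(\alpha,n)$ a non-limit point of $D(\alpha,n)$. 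A \emph{single} embedding produced by $\boldVP$ then already contradicts the recorded failure, because the ordinal $\sigma_1=\sup(D(\alpha,n)\cap\beta_1)$ is moved past $\mu$ for some $\mu\in C^{(n)}$ at which extendibility was supposed to fail. In Bagaria's original $\ZFC$ proof the same issue is handled differently (Kunen's inconsistency is used to pin down a single critical point), but either way your class $\mathcal K$ as written is not rich enough to finish the argument from one embedding.
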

\noindent The proof uses the following alternative form of Kunen's inconsistency:
\begin{theorem}[{\cite[Corollary 23.14(a)]{kanamori}}]\label{kunen_inconsistency_alternare_form}
    Assuming $\AC,$ for any $\delta,$ there is no nontrivial elementary embedding $j:V_{\delta+2}\rightarrow V_{\delta+2}.$
\end{theorem}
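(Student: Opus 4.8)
The plan is to refute a hypothetical nontrivial elementary embedding $j\colon V_{\delta+2}\to V_{\delta+2}$ by the $\omega$\-/J\'onsson function argument of Erd\H{o}s--Hajnal and Kunen. First I would locate the critical point $\kappa=\crit(j)$; since $j$ fixes every natural number and hence $\omega$, automatically $\kappa>\omega$. Forming the critical sequence $\kappa_0=\kappa$, $\kappa_{n+1}=j(\kappa_n)$ and $\lambda=\sup_n\kappa_n$, a routine computation (each $\kappa_n$ is inaccessible as computed in $V_{\delta+2}$, so its $j$\-/image can be neither a successor nor $\delta$ itself) gives $\lambda\le\delta$, $j(\lambda)=\lambda$, and $\cof(\lambda)=\omega$. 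The elementary fact I will need later is a continuity lemma: for every countable $b\subseteq\lambda$ one has $j(b)=j''b$. This follows by applying $j$ to an enumeration $g\colon\omega\to b$ and using that $j$ fixes $\omega$ and every natural number, so that $j(g)$ is a surjection of $\omega$ onto $j(b)$ whose range is exactly $\{j(\beta):\beta\in b\}$.

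Next I invoke $\AC$ through the Erd\H{o}s--Hajnal theorem: there is an $\omega$\-/J\'onsson function $F\colon[\lambda]^\omega\to\lambda$, meaning $F''[X]^\omega=\lambda$ for every $X\subseteq\lambda$ with $|X|=\lambda$. The subtlety specific to the $V_{\delta+2}$ formulation (absent in the $j\colon V\to V$ case) is that $F$, with Kuratowski pairs, has rank about $\lambda+4$ and so need not be a member of $V_{\delta+2}$. I would remove this obstruction by coding $F$ low in rank: fixing a bijection $h\colon\lambda\times\{0,1\}\to\lambda$, set
$$
S=\bigl\{\,\{h(\gamma,0):\gamma\in a\}\cup\{h(F(a),1)\}\ \bigm|\ a\in[\lambda]^\omega\,\bigr\}.
$$
Each listed element is a countable subset of $\lambda$, of rank $\le\lambda$, so $S\subseteq V_{\lambda+1}$ and therefore $S\in V_{\lambda+2}\subseteq V_{\delta+2}$. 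Moreover ``$S$ codes an $\omega$\-/J\'onsson function for $\lambda$'' quantifies only over subsets of $\lambda$ and their countable subsets, all lying in $V_{\lambda+1}$, so it is expressible in and correctly evaluated by $V_{\delta+2}$.

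Then I apply elementarity. Since $j(\lambda)=\lambda$, the set $j(S)\in V_{\delta+2}$ codes an $\omega$\-/J\'onsson function $G$ for $\lambda$, and from the code of $(b,F(b))$ in $S$ one reads off $G(j(b))=j(F(b))$ for every countable $b\subseteq\lambda$. Now examine $j''\lambda\subseteq\lambda$: via $j\restriction\lambda\in V_{\delta+2}$ it has size $\lambda$, yet $\kappa\notin j''\lambda$, because $j(\xi)=\kappa$ is impossible both for $\xi<\kappa$ (then $j(\xi)=\xi$) and for $\xi\ge\kappa$ (then $j(\xi)\ge j(\kappa)>\kappa$); hence $j''\lambda$ is a proper subset of $\lambda$ of size $\lambda$. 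Given any countable $a\subseteq j''\lambda$, the continuity lemma gives $a=j''b=j(b)$ for $b=\{\xi<\lambda:j(\xi)\in a\}$, whence $G(a)=G(j(b))=j(F(b))\in j''\lambda$. Thus $G$ maps $[j''\lambda]^\omega$ into the proper subset $j''\lambda$, so $G$ is not $\omega$\-/J\'onsson for $\lambda$ after all; as everything witnessing this lies in $V_{\delta+2}$, this contradicts elementarity and finishes the proof.

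I expect the genuine difficulty to be the coding step, namely arranging that an $\omega$\-/J\'onsson function together with the witness $j''\lambda$ be honest elements of $V_{\delta+2}$ so that $j$ may be applied to them and the failure of the J\'onsson property be detected internally. This is exactly the point at which both $\AC$ (for the existence of $F$) and the second power set---the ``$+2$'' rather than ``$+1$''---are indispensable: over $V_{\delta+1}$ alone the code $S$ could not be held as a single object, which is the structural reason the analogue for $V_{\delta+1}$ is not refuted by this method.
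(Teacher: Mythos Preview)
The paper does not supply its own proof of this statement; it is quoted from Kanamori as background for the discussion of Bagaria's Theorem~\ref{bagaria_cn_extendibles_and_vp}, so there is no in-paper argument to compare against. Your proposal is the standard Kunen proof via an $\omega$\-/J\'onsson function, carried out with the extra bookkeeping the $V_{\delta+2}$ formulation demands, and it is correct.

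A few remarks on the details. Your justification that $\lambda\le\delta$ appeals to inaccessibility of the $\kappa_n$, but all you actually use (and all that is needed) is that each $\kappa_n$ is a limit ordinal: this already forces $j(\kappa_n)\le\delta$ and, by iterating, $\kappa_{n+1}<\delta$. The continuity of $j$ at $\lambda$ then follows exactly as you say, using that the cofinal $\omega$\-/sequence $\langle\kappa_n\rangle$ lies in $V_{\delta+2}$. The coding of $F$ as $S\in V_{\lambda+2}\subseteq V_{\delta+2}$ via a bijection $h:\lambda\times 2\to\lambda$ is the right move; note that the decoding uses $h$ as a parameter, so the first-order statement to which you apply elementarity is ``$(S,h)$ codes an $\omega$\-/J\'onsson function for $\lambda$,'' and its image under $j$ involves $j(h)$ rather than $h$---your computation of $G(j(b))=j(F(b))$ implicitly (and correctly) tracks this. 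Finally, the verification that $G''[j''\lambda]^\omega\subseteq j''\lambda$ is carried out externally using $j$, but since $G$, $j''\lambda$, and each countable $a\subseteq j''\lambda$ all lie in $V_{\delta+2}$, the resulting $\Delta_0$ facts are reflected there, yielding the contradiction with elementarity. Your closing observation that the ``$+2$'' is exactly what allows $S$ (and $j\!\restriction\!\lambda$) to be held as single objects is the correct structural explanation for why the method does not refute nontrivial $j:V_{\delta+1}\to V_{\delta+1}$.
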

\noindent
Theorem~\ref{vp_to_berkeley_levels} comes from the struggle of bringing Theorem~\ref{bagaria_cn_extendibles_and_vp} into the Choiceless context. Thus, we introduce the $n$-choiceless extendible cardinals to play the role of $C^{(n)}$\-/extendible cardinals in Bagaria's work, without relying on $\AC.$ We establish a characterization parallel to Theorem~\ref{bagaria_cn_extendibles_and_vp}, namely:

\newtheorem*{thm:equivalence_of_vp_and_n_choiceless_extendibles}{Theorem~\ref{equivalence_of_vp_and_n_choiceless_extendibles}}
\begin{thm:equivalence_of_vp_and_n_choiceless_extendibles}
    For $n\geq1,$ the following are equivalent:
    \begin{enumerate}
        \item $\boldVP$ restricted to $\Pi_{n+1}$-definable, with ordinal parameters, classes of structures.
        \item There is a proper class of $n$-choiceless extendible cardinals.
        \item $\boldVP$ restricted to $\Pi_{n+1}$-definable, with parameters, classes of structures.
    \end{enumerate}
\end{thm:equivalence_of_vp_and_n_choiceless_extendibles}

We also consider the consistency strength of the failure of Theorem~\ref{bagaria_cn_extendibles_and_vp} in the Choiceless context. Let $\VP(\mathbf{\Pi_{n+1}})$ denote $\boldVP$ restricted to $\Pi_{n+1}$-definable, with parameters, classes of structures.

\newtheorem*{cor:equiconsistency_vp_no_extendibles_with_many_rank_berkeleys}{Corollary~\ref{equiconsistency_vp_no_extendibles_with_many_rank_berkeleys}}
\begin{cor:equiconsistency_vp_no_extendibles_with_many_rank_berkeleys}
    For $n\geq1,$ the following theories are equiconsistent:
    \begin{enumerate}
        \item $\ZF + \boldVP + ``\forall \kappa (\kappa \textrm{\textup{ is not $C^{(0)}$\-/extendible}})"$
        \item $\ZF + \VP(\mathbf{\Pi_{n+1}}) + ``\forall \kappa (\kappa \textrm{\textup{ is not $C^{(n)}$\-/extendible}})"$
        \item $\ZF + \VP(\mathbf{\Pi_{n+1}}) + ``\exists \xi \forall \kappa>\xi (\kappa \textrm{\textup{ is not $C^{(n)}$\-/extendible}})"$
        \item $\ZF + ``\textrm{\textup{There are unboundedly many rank-Berkeley cardinals}}"$
    \end{enumerate}
\end{cor:equiconsistency_vp_no_extendibles_with_many_rank_berkeleys}

Another interesting theory is $\ZF + \boldVP + ``\OR \textrm{\textup{ is not Mahlo}}"$ (Definition~\ref{definition_or_is_mahlo}). Assuming $\AC,$ by using Theorem~\ref{bagaria_cn_extendibles_and_vp}, one can show that $\boldVP$ implies $``\OR \textrm{\textup{ is Mahlo}}".$ In the choiceless context, we have the following:

\newtheorem*{cor:equiconsistency_theories_or_is_not_mahlo}{Corollary~\ref{equiconsistency_theories_or_is_not_mahlo}}
\begin{cor:equiconsistency_theories_or_is_not_mahlo}
    The following theories are equiconsistent:
    \begin{enumerate}
        \item $\ZF + \boldVP + ``\OR \textrm{\textup{ is not Mahlo}}"$
        \item $\ZF + ``\textrm{\textup{There are unboundedly many rank-Berkeley cardinals}}"$
    \end{enumerate}
\end{cor:equiconsistency_theories_or_is_not_mahlo}

The outline of the paper is as follows. In Section~\ref{section_choiceless_extendibles_and_supercompacts}, we introduce ``$n$-choiceless'' extendible and supercompact cardinals, and prove relations between them similar to those of Bagaria's $C^{(n)}$\-/extendible and $\Sigma_n$\-/supercompact cardinals in \cite{cn-cardinals} and \cite{more_on_preservation}. In Section~\ref{section_vopenka_and_choiceless_extendibles}, we prove Theorem~\ref{equivalence_of_vp_and_n_choiceless_extendibles}. In Sections \ref{section_berkeley_and_rank_berkeley_cardinals} and \ref{section_vp_and_berkeley_cardinals}, we define the various notions of Berkeley cardinals, prove some results about them, and show how they relate to Vop\v enka's Principle. Finally, in the last section, we prove Corollaries \ref{equiconsistency_vp_no_extendibles_with_many_rank_berkeleys} and \ref{equiconsistency_theories_or_is_not_mahlo}.

%%%%%%%%%%%%%%%%%%%%%%%%%%%%%%%%%%%%%%%%%%%%%%%%%%
%%%%%%%%%%%%%%%%%%%%%%%%%%%%%%%%%%%%%%%%%%%%%%%%%%
%%%%%%%%%%%%%%%%%%%%%%%%%%%%%%%%%%%%%%%%%%%%%%%%%%

\section{The Choiceless Cardinals}\label{section_choiceless_extendibles_and_supercompacts}

Recall that $C^{(n)}$ is the class of ordinals $\alpha$ that are $\Sigma_n$-correct, i.e., $V_\alpha \prec_{\Sigma_n} V.$ Given any set $X,$ the statement $``X \prec_{\Sigma_n} V"$ is given by the following formula:
\begin{equation}
    \forall \varphi \in \Sigma_n \forall x \in X^{<\omega} (V\satisfies_n \varphi[x] \implies X \satisfies \varphi[x]). \label{prec_sigma_n_formula}
\end{equation}
Now, the satisfaction relation $\satisfies$ for sets is $\Delta_1,$ and the global satisfaction relation $\satisfies_n$ for $\Sigma_n$ formulas is $\Sigma_n,$ for $n\geq 1$ (see \cite{kanamori}, Section 0). Hence, \eqref{prec_sigma_n_formula} is $\Pi_n,$ for $n\geq1.$

The class $C^{(0)}$ is clearly the entire class $\OR$ of ordinals, and is therefore $\Delta_0$-definable. For $n\geq1,$ the class $C^{(n)}$ is defined by:
\begin{equation*}
    \alpha\in C^{(n)} \iff \forall X (X=V_\alpha \implies X\prec_{\Sigma_n} V).%\label{C_n_formula}
\end{equation*}
Since $``X=V_\alpha"$ is $\Pi_1,$ the defining formula for $C^{(n)}$ is $\Pi_2$ if $n=1,$ and $\Pi_n$ if $n\geq2.$ But, $C^{(1)}$ is also definable using the following:
\begin{equation*}
    \alpha\in C^{(1)} \iff \exists X (X=V_\alpha \wedge X\prec_{\Sigma_1} V),
\end{equation*}
which is $\Sigma_2.$ As a result, we have that $C^{(n)}$ is $\Delta_0$ for $n=0,$ $\Delta_2$ for $n=1,$ and $\Pi_n$ for $n\geq2.$ We remark that if $\AC$ holds, $C^{(1)}$ becomes $\Pi_1$
%since then we can say that $\alpha\in C^{(1)}$ iff $\alpha$ is an uncountable cardinal such that for all transitive $M$ with $\alpha\in M$ satisfying a sufficiently strong finite fragment of $\ZFC$ we have $M\satisfies V_\alpha=H_\alpha$
\cite[Section 1]{cn-cardinals}.

\begin{definition}
    For each $n\geq0,$ given ordinals $\alpha<\gamma<\mu$ with $\mu$ in $C^{(n)},$ we say that $\gamma$ is \emph{$(\alpha,\mu,n)$-choiceless extendible} iff there is $\nu$ in $C^{(n)}$ and an elementary embedding $j:V_\mu \rightarrow V_\nu$ such that $\crit j>\alpha$ and $j(\gamma)>\mu.$
    We say that $\gamma$ is \emph{$\alpha$-$n$-choiceless extendible} iff $\gamma$ is $(\alpha,\mu,n)$-choiceless extendible for all $\mu>\gamma$ in $C^{(n)},$ and we simply say that $\gamma$ is \emph{$n$-choiceless extendible} iff it is $\alpha$-$n$-choiceless extendible for all $\alpha<\gamma.$
\end{definition}

We shall say that $\gamma$ is $(<\!\alpha,\mu,n)$-(respectively $(\leq\!\alpha,\mu,n)$-)choiceless extendible iff it is $(\beta,\mu,n)$-choiceless extendible for all $\beta<\alpha$ (respectively $\beta\leq\alpha).$ Similar remarks hold for $\mu.$ Furthermore, we will allow the occurrence of $\OR$ in the second coordinate. Hence, for example, a cardinal $\gamma$ is $n$-choiceless extendible iff it is $(<\!\gamma,<\!\OR,n)$-choiceless extendible.

The definition above stems from Bagaria's $C^{(n)}$\-/extendible cardinals \cite[12]{cn-cardinals}. Notions somewhat similar to $\alpha$-$0$-choiceless extendibility appear in the works of David Asper\'o \cite{aspero} and Gabriel Goldberg \cite{goldberg_measurable_choiceless}. Notice that if $\gamma$ is $\alpha$-$n$-choiceless extendible, then every ordinal $\delta>\gamma$ is $\alpha$-$n$-choiceless extendible. On the other hand, if $\gamma$ is $n$-choiceless extendible, then it must be a cardinal.

\begin{comment}
\begin{remark}
    From now on, we will use the term $C^{(n)}$\-/extendible to mean precisely what Bagaria calls $C^{(n)}$\-/extendible in his papers. See Definition~\ref{cn_extendibles} below for reference. Bagaria shows that the two notions are equivalent for $n\geq1.$ Since we will only be concerned with $n\geq1,$ and since this equivalence can be proven without $\AC,$ this decision should lead to no confusion as one goes back and forth between this paper and \cite{cn-cardinals}.
\end{remark}
\end{comment}

In the context of $\AC,$ one can use the fact that there is no elementary embedding $j:V_{\delta+2}\rightarrow V_{\delta+2}$ to show that an $n$-choiceless extendible cardinal $\gamma$ is either $C^{(n)}$\-/extendible or a limit of $C^{(n)}$\-/extendible cardinals. However, if there is such an embedding, then it is possible that this fails for $\gamma.$ And when it fails, we can argue that the majority of the witnessing $j$s (meaning all except for set many) have critical points strictly between $\alpha$ and $\gamma$ and are such that $\{\beta\mid j(\beta)=\beta\}\cap(\gamma\setminus\crit{j})\neq \emptyset.$ This will be clear in the final section.

A lot of the important properties of $C^{(n)}$\-/extendible cardinals are still provable under this new more general definition, albeit sometimes with slightly more technical difficulties and restrictions. For example, the following is generalized from the case of $C^{(n)}$\-/extendible cardinals in \cite{cn-cardinals}.

\begin{proposition}\label{correctness_of_choiceless_extendibles}
    For each $n\geq 0,$ every $n$-choiceless extendible cardinal is $\Sigma_{n+2}$-correct.
\end{proposition}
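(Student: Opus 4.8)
The plan is to prove, by induction on $\ell$, that $\gamma\in C^{(\ell)}$ for every $\ell\le n+2$; the case $\ell=n+2$ is exactly the assertion. The base case $\ell=0$ is trivial since $C^{(0)}=\OR$. For the inductive step, assuming $\gamma\in C^{(m)}$ for all $m\le\ell$ (with $\ell\le n+1$), I would verify the two halves of $V_\gamma\prec_{\Sigma_{\ell+1}}V$ separately. The upward half is essentially free: if $V_\gamma\models\exists x\,\psi(x,a)$ with $\psi\in\Pi_\ell$ and $a\in V_\gamma$, a witness $b\in V_\gamma$ satisfies $V_\gamma\models\psi[b,a]$, and the $\Pi_\ell$-correctness already contained in $\gamma\in C^{(\ell)}$ lifts this to $V\models\psi[b,a]$, so $V\models\exists x\,\psi(x,a)$. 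All the real work is in the downward (reflection) half, and this is where the choiceless extendibility of $\gamma$ enters.

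For the downward half, suppose $V\models\exists x\,\psi(x,a)$ with $\psi\in\Pi_\ell$ and $a\in V_\gamma$, and fix a witness $b$ with $V\models\psi[b,a]$. Using that $C^{(n)}$ is closed and unbounded (provable in $\ZF$), I would choose $\mu\in C^{(n)}$ with $\mu>\gamma$ and $b\in V_\mu$, and pick $\alpha$ with $\rank(a)\le\alpha<\gamma$. Applying $\alpha$-$n$-choiceless extendibility of $\gamma$ to this $\mu$ yields $\nu\in C^{(n)}$ and an elementary $j\colon V_\mu\to V_\nu$ with $\crit j>\alpha\ge\rank(a)$ and $j(\gamma)>\mu$. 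Two observations then close the step. First, $j(\gamma)\in C^{(\ell)}$: since $\mu,\nu\in C^{(n)}\subseteq C^{(\ell)}$ (using $\ell\le n$), $\Sigma_\ell$-elementarity composes to show that, for ordinals below $\mu$ (resp.\ $\nu$), the relativized predicate ``$\cdot\in C^{(\ell)}$'' agrees with the real one; thus $\gamma\in C^{(\ell)}$ gives $V_\mu\models{}$``$\gamma\in C^{(\ell)}$'', elementarity of $j$ gives $V_\nu\models{}$``$j(\gamma)\in C^{(\ell)}$'', and agreement in $V_\nu$ yields $j(\gamma)\in C^{(\ell)}$ genuinely. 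Since $\psi$, being $\Pi_\ell$, transfers downward to the $C^{(\ell)}$-level $j(\gamma)$ and $b,a\in V_\mu\subseteq V_{j(\gamma)}$, we get $V_{j(\gamma)}\models\psi[b,a]$, hence $V_{j(\gamma)}\models\exists x\,\psi(x,a)$.

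The second observation is the transfer principle that, for any $a$ with $\rank(a)<\crit j$ and any formula $\varphi$,
\[
V_\gamma\models\varphi[a]\iff V_{j(\gamma)}\models\varphi[a].
\]
This holds because the satisfaction relation for set models is $\Delta_1$ and hence absolute: ``$V_\gamma\models\varphi[a]$'' is computed inside $V_\mu$ (as $\gamma<\mu$), applying $j$ turns it into ``$V_{j(\gamma)}\models\varphi[a]$'' as read inside $V_\nu$ (using $j(a)=a$, which holds since $\rank(a)<\crit j$, and that codes of formulas are fixed), and the latter is absolute. Applying this with $\varphi=\exists x\,\psi$ converts $V_{j(\gamma)}\models\exists x\,\psi(x,a)$ into $V_\gamma\models\exists x\,\psi(x,a)$, completing the reflection and the inductive step, so that $\gamma\in C^{(n+2)}$.

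The step I expect to be the main obstacle is the downward half, and specifically the absence of $\AC$. In Bagaria's setting an extendible-type embedding can be arranged with critical point exactly $\gamma$, so one argues directly that $\gamma=\crit j$ inherits correctness from the target; here $\crit j$ may fall strictly below $\gamma$, and that identification is unavailable. The transfer principle relating $V_\gamma$ to $V_{j(\gamma)}$, combined with the correctness of $j(\gamma)$, is what replaces it, at the cost of the bookkeeping in the first observation. That same bookkeeping pins down the exact bound: certifying $j(\gamma)\in C^{(\ell)}$ needs $\ell\le n$ so that the $C^{(n)}$-targets $\mu,\nu$ compute the predicate correctly, which drives the reflection up to $\Sigma_{n+1}$ and, via $\Pi_{n+1}$-downward-absoluteness at the $C^{(n)}$-level $j(\gamma)$, one further level to $\Sigma_{n+2}$; pushing to $\Sigma_{n+3}$ would require $j(\gamma)\in C^{(n+1)}$, which $\nu\in C^{(n)}$ cannot guarantee.
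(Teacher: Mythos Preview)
Your argument is correct. The paper does not supply a proof of this proposition; it only remarks that the result is ``generalized from the case of $C^{(n)}$-extendible cardinals'' in Bagaria's work, so there is nothing to compare against beyond that hint. Your proof is precisely the expected explicit realization of that generalization: the induction on $\ell$, the verification that $j(\gamma)\in C^{(\min(\ell,n))}$ via the chain $V_\gamma\prec_{\Sigma_{\min(\ell,n)}}V_\mu$, elementarity, and $V_{j(\gamma)}\prec_{\Sigma_{\min(\ell,n)}}V_\nu\prec_{\Sigma_{\min(\ell,n)}}V$, the reflection of the witness $b$ into $V_{j(\gamma)}$, and finally the transfer $V_{j(\gamma)}\models\varphi[a]\iff V_\gamma\models\varphi[a]$ by absoluteness of set-model satisfaction, are all sound. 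The one point that genuinely goes beyond Bagaria's $\ZFC$ argument, and which you identify correctly, is that here $\crit j$ may lie strictly below $\gamma$, so one cannot simply argue that $\gamma=\crit j$ inherits correctness; your transfer principle is exactly the right substitute. Your analysis of why the bound is $\Sigma_{n+2}$ and not higher (namely that $\nu\in C^{(n)}$ certifies $j(\gamma)\in C^{(n)}$ but not $C^{(n+1)}$, so one gets $\Pi_{n+1}$-downward-absoluteness but no further) is also accurate.
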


Joan Bagaria and Alejandro Poveda \cite{more_on_preservation} prove an equivalence between the notions of $C^{(n)}$\-/extendibility and $\Sigma_{n+1}$\-/supercompactness. An analogous equivalence will be important for our purposes. We are therefore led to the following definitions.

\begin{definition}
    For each $n\geq 0,$ given ordinals $\alpha<\gamma<\lambda$ with $\lambda$ in $C^{(n)},$ and given a set $a\in V_\lambda,$ we say that $\gamma$ is \emph{$(\alpha,\lambda, a, n)$-choiceless supercompact} iff there exists $\bar \lambda<\gamma$ in $C^{(n)}$ and $\bar a \in V_{\bar \lambda}$ for which there is an elementary embedding $j:V_{\bar\lambda}\rightarrow V_\lambda$ with $\crit j>\alpha$ and $j(\bar a)=a.$
    We say that $\gamma$ is \emph{$\alpha$-$n$-choiceless supercompact} iff $\gamma$ is $(\alpha,\lambda, a, n)$-choiceless supercompact for all $\lambda>\gamma$ in $C^{(n)}$ and for all $a\in V_\lambda,$ and we simply say that $\gamma$ is $n$-choiceless supercompact iff it is $\alpha$-$n$-choiceless supercompact for all $\alpha<\gamma.$
\end{definition}

Again, similar to the case of choiceless extendible cardinals, we allow the use of inequality symbols and $\OR$ in our notation for choiceless supercompact cardinals. Here, for the third coordinate, we will also use the notation $<\!X,$ where it will mean that the set $a$ can be any member of the set or class $X.$

\begin{definition}
    For each $n\geq0,$ given ordinals $\alpha<\gamma<\mu,$ we say that $\gamma$ is \emph{$(\alpha,\mu,n)$-choiceless extendible$^*$} iff there is a $\nu>\mu$ and an elementary embedding $j:V_\mu \rightarrow V_\nu$ such that $\crit j>\alpha,\ j(\gamma)>\mu,$ and $j(\gamma)\in C^{(n)}.$
    We say that $\gamma$ is \emph{$\alpha$-$n$-choiceless extendible$^*$} iff $\gamma$ is $(\alpha,\mu,n)$-choiceless extendible$^*$ for all $\mu>\gamma$ in $C^{(n)},$ and we simply say that $\gamma$ is \emph{$n$-choiceless extendible$^*$} iff it is $(\alpha,\mu,n)$-choiceless extendible$^*$ for all $\alpha<\gamma.$
\end{definition}

Remarks similar to those following the previous two definitions about the use of symbols such as $\leq$ and $\OR$ apply for the above definition as well. The following is easy to show.

\begin{proposition}\label{correctness_of_star_choiceless_extendibles}
    For each $n\geq 0,$ every $n$-choiceless extendible$^*$ cardinal is $\Sigma_{n+2}$-correct.
\end{proposition}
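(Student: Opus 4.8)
The plan is to prove Proposition~\ref{correctness_of_star_choiceless_extendibles} by mimicking the argument behind Proposition~\ref{correctness_of_choiceless_extendibles}, with the extra information $j(\gamma) \in C^{(n)}$ playing the role that $\nu \in C^{(n)}$ played in the ordinary choiceless-extendible case. Fix an $n$-choiceless extendible$^*$ cardinal $\gamma$; I want to show $V_\gamma \prec_{\Sigma_{n+2}} V$. So suppose $\exists y\, \psi(y,a)$ is a $\Sigma_{n+2}$ statement with parameters $a \in V_\gamma$, where $\psi$ is $\Pi_{n+1}$, and assume it holds in $V$. I would first pick a witness $y$, then choose an ordinal $\mu \in C^{(n)}$ large enough that $\gamma < \mu$, $a,y \in V_\mu$, and $V_\mu$ reflects the $\Pi_{n+1}$ matrix $\psi(y,a)$ correctly — this is exactly the kind of reflection that $C^{(n)}$ membership buys us, since $V_\mu \prec_{\Sigma_n} V$ lets us pull down $\Pi_{n+1}$ truth to $\Sigma_{n+1}$-over-$V_\mu$ truth. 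Then $V_\mu \models \exists y\,\psi(y,a)$.

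Next I would invoke $(\alpha,\mu,n)$-choiceless extendibility$^*$ for a suitable $\alpha < \gamma$ with $a \in V_{\alpha+1}$ (so that $\crit j > \alpha$ forces $j(a) = a$), obtaining $j : V_\mu \to V_\nu$ with $\crit j > \alpha$, $j(\gamma) > \mu$, and crucially $j(\gamma) \in C^{(n)}$. By elementarity, $V_\nu \models \exists y\,\psi(y,j(a)) = \exists y\,\psi(y,a)$. The key move is then to transfer this back down to $V_{j(\gamma)}$: since $j(\gamma) \in C^{(n)}$ and $\mu < j(\gamma) \le \nu$, one argues that $V_{j(\gamma)}$ correctly reflects the relevant $\Sigma_{n+1}$/$\Pi_{n+1}$ facts that hold in $V$ (and hence the witness and its $\Pi_{n+1}$ verification transfer), so that $V_{j(\gamma)} \models \exists y\,\psi(y,a)$. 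Finally, because $a \in V_{\crit j} \cap V_\gamma$ and $\mu$ (hence the position of $a$ relative to $\crit j$) sits below $j$'s critical point, I pull the witness back through $j$ — or equivalently use that $j$ restricted appropriately is elementary and fixes $a$ — to conclude $V_\gamma \models \exists y\,\psi(y,a)$, as desired. Conversely, the downward direction $V_\gamma \models \varphi \Rightarrow V \models \varphi$ for $\Sigma_{n+2}$ follows from the $\Pi_{n+1}$/$\Sigma_{n+1}$ half of correctness once the upward direction is in place, or is handled symmetrically.

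The main obstacle I anticipate is the bookkeeping around the two distinct $C^{(n)}$ conditions available here versus in the unstarred case: in Proposition~\ref{correctness_of_choiceless_extendibles} the target ordinal $\nu$ lives in $C^{(n)}$, whereas here it is $j(\gamma)$ that is guaranteed $\Sigma_n$-correct, and $\nu$ itself need not be. So the reflection has to be staged through $V_{j(\gamma)}$ rather than through $V_\nu$, and I must verify that $j(\gamma) \in C^{(n)}$ together with $j(\gamma) \le \nu$ genuinely suffices to recover the $\Sigma_{n+1}$ witness — this is where the $\Sigma_n$-correctness of $j(\gamma)$ is leveraged to promote $\Sigma_n$-correctness to the $\Sigma_{n+1}$ reflection needed to close the gap to $\Sigma_{n+2}$. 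A secondary technical point is ensuring the parameter $a$ is fixed by $j$, which is arranged by choosing $\alpha$ with $a \in V_{\alpha+1}$ and using $\crit j > \alpha$; since $\gamma$ is $n$-choiceless extendible$^*$ this choice of $\alpha$ is available. Because these are exactly the maneuvers already established for the unstarred proposition, I expect the proof to be short, which matches the paper's assertion that ``the following is easy to show.''
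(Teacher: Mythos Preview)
Your proposal is essentially correct and follows the standard reflection argument that the paper has in mind (the paper gives no proof beyond ``easy to show''). Two small cleanups: the phrase ``$\mu$ \dots\ sits below $j$'s critical point'' is a slip --- it is $\alpha$ (and hence $a\in V_{\alpha+1}$) that lies below $\crit j$, while $\mu>\gamma>\crit j$; and the detour through $V_\nu$ is unnecessary --- one goes directly from $V\models\psi(y,a)$ with the original witness $y\in V_\mu\subseteq V_{j(\gamma)}$ to $V_{j(\gamma)}\models\psi(y,a)$ using $j(\gamma)\in C^{(n)}$, and then pulls back via the elementarity of $j\upharpoonright V_\gamma:V_\gamma\to V_{j(\gamma)}$ (not by ``pulling the witness through $j$'', since $y$ need not be in $\range j$). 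Your handling of the converse direction is also fine: having $V\models\varphi\Rightarrow V_\gamma\models\varphi$ for all $\Sigma_{n+2}$ formulas yields it in particular for both $\Sigma_{n+1}$ and $\Pi_{n+1}$ formulas, which together give $\gamma\in C^{(n+1)}$, and that in turn supplies the missing implication $V_\gamma\models\varphi\Rightarrow V\models\varphi$ for $\Sigma_{n+2}$.
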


\begin{lemma}\label{star_choiceless_extendible_to_choiceless_supercompact}
    For $n\geq0,$ if $\gamma$ is $\alpha$-$n$-choiceless extendible$^*$ for some fixed $\alpha<\gamma,$ then it is also $\alpha$-$n+1$-choiceless supercompact.
\end{lemma}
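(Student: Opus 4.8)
The plan is to run a Magidor-style reflection off an extendibility$^*$ embedding. Fix $\lambda \in C^{(n+1)}$ with $\lambda > \gamma$ and $a \in V_\lambda$; the goal is to produce $\bar\lambda < \gamma$ in $C^{(n+1)}$, a set $\bar a \in V_{\bar\lambda}$, and an elementary $h \colon V_{\bar\lambda} \to V_\lambda$ with $\crit h > \alpha$ and $h(\bar a)=a$. First I would pick $\mu \in C^{(n+1)}$ with $\mu > \lambda$ (such $\mu$ exist by Lévy reflection). Since $C^{(n+1)} \subseteq C^{(n)}$, the hypothesis that $\gamma$ is $\alpha$-$n$-choiceless extendible$^*$ applies at $\mu$, yielding $\nu > \mu$ and an elementary $j \colon V_\mu \to V_\nu$ with $\crit j > \alpha$, $j(\gamma) > \mu$, and $j(\gamma) \in C^{(n)}$. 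Observe that $j(\gamma) > \gamma$ forces $\crit j \le \gamma$, so $\alpha < \crit j \le \gamma < \lambda < \mu < j(\gamma) < \nu$, $j(\alpha)=\alpha$, and in particular $\lambda < j(\gamma)$.

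The core object is $k := j\restriction V_\lambda$. Because $V_\mu$ and $V_\nu$ both contain $V_\lambda$ and $V_{j(\lambda)}$ as sets and compute the ($\Delta_1$) satisfaction relation correctly, elementarity of $j$ descends to $k$: the map $k \colon V_\lambda \to V_{j(\lambda)}$ is elementary with $\crit k = \crit j > \alpha$, $k(a)=j(a)$, and $k \in V_\nu$ (here $j(\lambda) < \nu$ and $\nu$ is a limit). Granting for the moment that $V_\nu \models \lambda \in C^{(n+1)}$, the triple $(\lambda,a,k)$ then witnesses in $V_\nu$ the assertion that there are $\bar\lambda < j(\gamma)$ in $C^{(n+1)}$, $\bar a \in V_{\bar\lambda}$, and elementary $h \colon V_{\bar\lambda} \to V_{j(\lambda)}$ with $\crit h > \alpha$ and $h(\bar a)=j(a)$. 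This is precisely the $j$-image of the corresponding statement with parameters $\gamma,\lambda,a$ (all matched by $j$, with $j(\alpha)=\alpha$), so pulling back along $j$ gives that statement in $V_\mu$. Since $\mu \in C^{(n+1)}$ and every witness has rank below $\mu$, both ``$\bar\lambda \in C^{(n+1)}$'' and the elementarity of $h$ are computed correctly in $V_\mu$ and hence hold in $V$, completing the proof.

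The hard part is the bracketed claim $V_\nu \models \lambda \in C^{(n+1)}$ — certifying the witness ordinal in the target model, where $\nu$ is \emph{not} assumed to lie in any $C^{(m)}$. For $n=0$ this is immediate: from $V_\lambda \prec_{\Sigma_1} V$ together with $V_\lambda \subseteq V_\nu \subseteq V$, upward $\Sigma_1$-absoluteness yields $V_\lambda \prec_{\Sigma_1} V_\nu$, i.e. $V_\nu \models \lambda \in C^{(1)}$. For $n\ge 1$ I would exploit that, by Lévy reflection inside $V_\nu$, membership in $C^{(n+1)}$ admits the $\Sigma_{n+2}$ form ``$\exists\beta\,(\beta \in C^{(n+1)} \wedge V_\beta \models \lambda \in C^{(n+1)})$'' and take $\beta=j(\lambda)$: elementarity (from $V_\mu \models \lambda \in C^{(n+1)}$) gives $V_\nu \models j(\lambda)\in C^{(n+1)}$, reducing the claim to $V_{j(\lambda)}\models \lambda \in C^{(n+1)}$, that is, to $V_\lambda \prec_{\Sigma_{n+1}} V_{j(\lambda)}$.

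I expect establishing this last elementarity to be the main obstacle. The tools available are the genuine correctness $V_\lambda \prec_{\Sigma_{n+1}} V$, the $\Sigma_n$-correctness $V_{j(\gamma)} \prec_{\Sigma_n} V$ furnished by $j(\gamma)\in C^{(n)}$ (which controls the $\Pi_n$-kernels of $\Sigma_{n+1}$-formulas with parameters drawn from $V_\lambda \subseteq V_{j(\gamma)}$), the $\Sigma_{n+2}$-correctness of $\gamma$ from Proposition~\ref{correctness_of_star_choiceless_extendibles}, and the elementary map $k$ on parameters of rank below $\crit k$. The delicate point is that $k$ moves parameters of rank above its critical point, so correctness cannot simply be transported along $k$; the argument must instead sandwich these correctness facts to push $\Sigma_{n+1}$-correctness about $\lambda$ into the codomain. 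Once that bookkeeping is in place, everything else is routine reflection.
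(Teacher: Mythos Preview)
Your approach is essentially the paper's: fix $\mu\in C^{(n+1)}$ above $\lambda$, take an extendibility$^*$ embedding $j:V_\mu\to V_\nu$, observe that $j\vert_{V_\lambda}$ witnesses $(\alpha,j(\lambda),j(a),n+1)$-choiceless supercompactness of $j(\gamma)$ in $V_\nu$, and pull back along $j$. The only substantive task, as you identify, is to certify $V_\nu\models\lambda\in C^{(n+1)}$, i.e.\ $V_\lambda\prec_{\Sigma_{n+1}}V_\nu$.

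Your detour through $j(\lambda)$ is unnecessary, however, and leaves the argument unfinished where it should be the cleanest part. The paper sandwiches directly through $j(\gamma)$: from $\lambda\in C^{(n+1)}$ and $j(\gamma)\in C^{(n)}$ one gets $V_\lambda\prec_{\Sigma_{n+1}}V_{j(\gamma)}$ (your ``$\Pi_n$-kernel'' remark is exactly this step); and from $V_\gamma\prec_{\Sigma_{n+1}}V_\mu$ (using the $\Sigma_{n+2}$-correctness of $\gamma$ from Proposition~\ref{correctness_of_star_choiceless_extendibles} together with $\mu\in C^{(n+1)}$), elementarity of $j$ gives $V_{j(\gamma)}\prec_{\Sigma_{n+1}}V_\nu$. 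Concatenating yields $V_\lambda\prec_{\Sigma_{n+1}}V_\nu$ outright. So the ``bookkeeping'' you anticipate is two lines, and the tools you list are precisely the ones used---you just needed to assemble them rather than reducing to yet another target model $V_{j(\lambda)}$.
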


\begin{proof}
    Suppose $\gamma$ is $\alpha$-$n$-choiceless extendible$^*.$ Fix $\lambda>\gamma$ in $C^{(n+1)}$ and a set $a\in V_\lambda,$ and let us show that $\gamma$ is $(\alpha,\lambda,a,n+1)$-choiceless supercompact. Let $\mu>\lambda$ be in $C^{(n+1)}$ and, using the fact that $\gamma$ is $\alpha$-$n$-choiceless extendible$^*,$ let $j:V_\mu\rightarrow V_\nu$ be such that $j(\gamma)>\mu,$ where $j(\gamma)$ is in $C^{(n)},$ and $\crit j>\alpha.$ Notice now that $j\vert_{V_\lambda}$ belongs to $V_\nu.$ 
    
    \begin{claim}
        $V_\lambda\prec_{\Sigma_{n+1}}V_\nu.$
    \end{claim}
    
    \begin{proof}[Proof of claim]
        On the one hand, $\lambda\in C^{(n+1)}$ and $j(\gamma)\in C^{(n)}$ imply $V_\lambda\prec_{\Sigma_{n+1}}V_{j(\gamma)}.$ On the other hand, $V_\gamma\prec_{\Sigma_{n+1}}V_\mu$ by Proposition~\ref{correctness_of_star_choiceless_extendibles}, hence elementarity of $j$ gives $V_{j(\gamma)}\prec_{\Sigma_{n+1}}V_\nu.$ Putting both together, the claim follows.
    \end{proof}
    
    Thus, $j\vert_{V_\lambda}$ witnesses in $V_\nu$ the $(\alpha,j(\lambda),j(a),n+1)$-choiceless supercompactness of $j(\gamma).$ By elementarity of $j,$ there must be some $k$ witnessing the $(\alpha,\lambda,a,n+1)$-choiceless supercompactness of $\gamma$ in $V_\mu.$ But, since $\mu$ is correct enough, any such $k$ will be a real witness for the $(\alpha,\lambda,a,n+1)$-choiceless supercompactness of $\gamma.$ Since $\lambda$ and $a$ were arbitrary, we are done.
\end{proof}

\begin{lemma}\label{choiceless_supercompact_to_choiceless_extendible}
    For $n\geq1,$ if $\gamma$ is $\alpha$-$n+1$-choiceless supercompact for some fixed $\alpha<\gamma,$ then it is also $\alpha$-$n$-choiceless extendible.
\end{lemma}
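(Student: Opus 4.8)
The plan is to run a Magidor\-/style reflection argument, mirroring the level\-/matching of Bagaria--Poveda: the extra degree of correctness packed into $(n{+}1)$\-/choiceless supercompactness is exactly what will let $\Sigma_{n+1}$\-/reflection through $C^{(n+1)}$ manufacture an $n$\-/choiceless extendibility embedding. So fix an arbitrary $\mu>\gamma$ in $C^{(n)}$; it suffices to produce $\nu\in C^{(n)}$ and an elementary $j\colon V_\mu\to V_\nu$ with $\crit j>\alpha$ and $j(\gamma)>\mu$. First I would choose $\lambda>\mu$ in $C^{(n+1)}$ (possible since $C^{(n+1)}$ is closed unbounded, hence unbounded, in $\ZF$ by reflection) and apply the $(\alpha,\lambda,a,n+1)$\-/choiceless supercompactness of $\gamma$ to the target $a=\langle\gamma,\mu\rangle\in V_\lambda$. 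This yields $\bar\lambda<\gamma$ in $C^{(n+1)}$, a pair $\bar a=\langle\bar\gamma,\bar\mu\rangle\in V_{\bar\lambda}$, and an elementary $k\colon V_{\bar\lambda}\to V_\lambda$ with $\crit k>\alpha$ and $k(\langle\bar\gamma,\bar\mu\rangle)=\langle\gamma,\mu\rangle$, so that $k(\bar\gamma)=\gamma$ and $k(\bar\mu)=\mu$.

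Next I would record the order relations. Since $k$ moves both $\bar\gamma$ and $\bar\mu$, we get $\alpha<\crit k\le\bar\gamma<\bar\mu<\bar\lambda<\gamma<\mu$; in particular $k(\alpha)=\alpha$, and all of $\alpha,\bar\gamma,\bar\mu$ lie in $V_{\bar\lambda}$. Restricting $k$ gives an elementary embedding $e:=k\vert_{V_{\bar\mu}}\colon V_{\bar\mu}\to V_\mu$ (elementary because $V_{\bar\mu}$ is definable in $V_{\bar\lambda}$ from $\bar\mu$ while $k(\bar\mu)=\mu$), with $\crit e=\crit k>\alpha$ and $e(\bar\gamma)=\gamma>\bar\mu$. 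As $\mu\in C^{(n)}$, the embedding $e$ witnesses that $\bar\gamma$ is $(\alpha,\bar\mu,n)$\-/choiceless extendible. I would also note $\bar\mu\in C^{(n)}$, which transfers down along $k$ from $\mu\in C^{(n)}$ using that $\lambda,\bar\lambda\in C^{(n+1)}$ compute $C^{(n)}$ correctly below themselves. Thus, writing $\Phi(x,y,\alpha)$ for the formula ``$x$ is $(\alpha,y,n)$\-/choiceless extendible'', we have established $\Phi(\bar\gamma,\bar\mu,\alpha)$ in $V$.

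The final move is a reflection sandwich. Because $\bar\lambda\in C^{(n+1)}$ gives $V_{\bar\lambda}\prec_{\Sigma_{n+1}}V$, the statement $\Phi(\bar\gamma,\bar\mu,\alpha)$ descends to hold in $V_{\bar\lambda}$ (with a new, bounded witness there, not the original $e$); applying the elementarity of $k$, which fixes $\alpha$ and sends $\bar\gamma,\bar\mu$ to $\gamma,\mu$, transports this to $\Phi(\gamma,\mu,\alpha)$ in $V_\lambda$; and since $\lambda\in C^{(n+1)}$ gives $V_\lambda\prec_{\Sigma_{n+1}}V$, this reflects back up to $\Phi(\gamma,\mu,\alpha)$ in $V$, which is precisely the $(\alpha,\mu,n)$\-/choiceless extendibility of $\gamma$. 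As $\mu$ was an arbitrary element of $C^{(n)}$ above $\gamma$, it follows that $\gamma$ is $\alpha$-$n$\-/choiceless extendible.

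The main obstacle, and the step on which the whole sandwich rests, is verifying that $\Phi$ is genuinely $\Sigma_{n+1}$, since only then does $\Sigma_{n+1}$\-/correctness (membership in $C^{(n+1)}$) justify both reflection steps. The key observation is that once a correct $\nu\in C^{(n)}$ is guessed, the witnessing embedding $j\colon V_\mu\to V_\nu$ can be bounded (it lies in $V_{\nu+\omega}$), so the inner assertion ``there is an elementary $j$ with $\crit j>\alpha$ and $j(\gamma)>\mu$'' is $\Sigma_1$; combining this with the defining formula for $C^{(n)}$ under the leading $\exists\nu$, and using that $C^{(n)}$ is $\Pi_n$ for $n\ge 2$ and $\Delta_2$ for $n=1$, the whole formula collapses to $\Sigma_{n+1}$. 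This is exactly the computation that enforces the one\-/level gap between supercompactness at $n{+}1$ and extendibility at $n$, and it is where I would spend the most care.
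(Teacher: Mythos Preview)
Your argument is correct and follows essentially the same route as the paper: pick $\lambda\in C^{(n+1)}$ above $\mu$, pull back $\langle\gamma,\mu\rangle$ along a supercompactness embedding $k:V_{\bar\lambda}\to V_\lambda$, observe that $k\vert_{V_{\bar\mu}}$ witnesses $(\alpha,\bar\mu,n)$-choiceless extendibility of $\bar\gamma$, and then use $\Sigma_{n+1}$-correctness of $\bar\lambda$ and $\lambda$ together with elementarity of $k$ to transport this to $(\alpha,\mu,n)$-choiceless extendibility of $\gamma$. The paper presents the reflection step more tersely (it passes the parameter $V_{\bar\mu}$ directly rather than bounding the embedding in $V_{\nu+\omega}$), but the complexity computation and the overall structure are the same.
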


\begin{proof}
    Suppose $\gamma$ is $\alpha$-$n+1$-choiceless supercompact, and fix $\mu>\gamma$ in $C^{(n)}.$ We want to show that $\gamma$ is $(\alpha,\mu,n)$-choiceless extendible. Let $\lambda>\mu$ be in $C^{(n+1)}$ and using the $\alpha$-$n+1$-choiceless supercompactness of $\gamma,$ let $j:V_{\bar\lambda}\rightarrow V_{\lambda}$ be an elementary embedding such that $\crit j>\alpha,\ j(\bar \gamma)=\gamma, \ j(\bar \mu)=\mu,$ and $\bar\lambda\in C^{(n+1)}.$ Noticing, by elementarity, that $\bar \mu$ is in fact in $C^{(n)},$ we conclude that $j\vert_{V_{\bar\mu}}$ witnesses the $(\alpha,\bar\mu,n)$-choiceless extendibility of $\bar \gamma.$ The existence of such witness is a $\Sigma_{n+1}$ statement in the parameters $\alpha, \bar \mu, V_{\bar \mu},$ and $\bar\gamma,$ as seen in the formula
    \begin{equation}
        \exists k, Y, \nu (Y=V_\nu \wedge Y\prec_{\Sigma_n} V \wedge k:V_{\bar\mu}\prec Y \wedge \crit k>\alpha \wedge k(\bar\gamma)>\bar\mu).\label{choiceless_super_to_ext_displayed_formula}
    \end{equation}
    Since $\bar\lambda\in C^{(n+1)},$ we must have $k\in V_{\bar\lambda}$ that witnesses the $(\alpha,\bar\mu,n)$-choiceless extendibility of $\bar \gamma$ in $V_{\bar \lambda}.$ Now elementarity of $j$ tells us that $j(k)$ witnesses the $(\alpha,\mu,n)$-choiceless extendibility of $\gamma$ in $V_{\lambda}.$ Since $\lambda$ is correct enough, this last statement is also true in $V;$ and as $\lambda$ was arbitrary, we are done.
\end{proof}

%\todo{this paragraph}
Note that, for $n=0,$ the proof above fails because the complexity of \eqref{choiceless_super_to_ext_displayed_formula} is $\Sigma_2$ rather than $\Sigma_1.$ In fact, it is easy to show that proving the case $n=0$ will render supercompact cardinals inconsistent in $\ZFC.$

%Here is why: Work in $\ZFC$ and assume there is a supercompact cardinal. By Magidor \cite{magidor}, a supercompact cardinal is just a $\Sigma_0$\-/supercompact cardinal, which falls under the definition of $0$-choiceless supercompact cardinals. Let $\gamma$ be the least $0$-choiceless supercompact cardinal. Under $\AC,$ being $0$-choiceless supercompact is the same as being $1$-choiceless supercompact. This is because $\lambda\in C^{(1)}$ iff $\lambda>\omega$ and $V_\lambda=H_\lambda,$ which can be checked internally in $V_\lambda,$ and therefore is preserved by elementarity. Proving the previous lemma for $n=0$ would prove the next theorem for $n=0.$ Hence, $\gamma$ would be a $0$-choiceless extendible. But, one can easily show that $0$-choiceless extendible cardinals are limits of $0$-choiceless supercompact cardinals, and so $\gamma$ cannot be the least $0$-choiceless supercompact cardinal while also being $0$-choiceless extendible.

\begin{theorem}\label{equivalence_of_choiceless_extendibles_and_choiceless_supercompacts}
    For $n\geq 1,$ the following are equivalent:
    \begin{enumerate}
        \item $\gamma$ is $n$-choiceless extendible.
        \item $\gamma$ is $n$-choiceless extendible$^*.$
        \item $\gamma$ is $n+1$-choiceless supercompact.
    \end{enumerate}
\end{theorem}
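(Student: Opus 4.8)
The plan is to prove the three implications cyclically, as $(1)\Rightarrow(2)\Rightarrow(3)\Rightarrow(1)$. Two of the three arrows are already available: the implication $(2)\Rightarrow(3)$ is exactly Lemma~\ref{star_choiceless_extendible_to_choiceless_supercompact} applied for each $\alpha<\gamma$, and $(3)\Rightarrow(1)$ is Lemma~\ref{choiceless_supercompact_to_choiceless_extendible} applied for each $\alpha<\gamma$ (recall that $n$-choiceless extendibility and its variants are just the conjunction over all $\alpha<\gamma$ of the corresponding $\alpha$-relativized notions). So the only genuinely new work is $(1)\Rightarrow(2)$, namely upgrading an $n$-choiceless extendible cardinal to an $n$-choiceless extendible$^*$ one.

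For $(1)\Rightarrow(2)$, I would fix $\alpha<\gamma$ and $\mu>\gamma$ in $C^{(n)}$ and take a witness $j\colon V_\mu\to V_\nu$ to $(\alpha,\mu,n)$-choiceless extendibility, so that $\nu\in C^{(n)}$, $\crit j>\alpha$, and $j(\gamma)>\mu$. First I would note that the target condition $\nu>\mu$ demanded by extendibility$^*$ comes for free: since $\gamma<\mu$ we have $j(\gamma)<\nu$, and as $j(\gamma)>\mu$ this forces $\nu>j(\gamma)>\mu$. Hence the only thing separating this $j$ from a witness to $(\alpha,\mu,n)$-choiceless extendibility$^*$ is the extra requirement $j(\gamma)\in C^{(n)}$, and the whole point is to extract this from the fact that $\nu\in C^{(n)}$.

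To do so I would invoke Proposition~\ref{correctness_of_choiceless_extendibles}: since $\gamma$ is $\Sigma_{n+2}$-correct it is in particular $\Sigma_n$-correct, so $\gamma\in C^{(n)}$. The key tool is then the standard reflection property of $C^{(n)}$: whenever $\rho\in C^{(n)}$ one has $(C^{(n)})^{V_\rho}=C^{(n)}\cap\rho$, a fact provable in $\ZF$ alone from $V_\rho\prec_{\Sigma_n}V$. Applying this with $\rho=\mu$ together with $\gamma\in C^{(n)}\cap\mu$ gives $V_\mu\models$ ``$\gamma\in C^{(n)}$''. Elementarity of $j$ then yields $V_\nu\models$ ``$j(\gamma)\in C^{(n)}$'', that is, $j(\gamma)\in (C^{(n)})^{V_\nu}$; and applying the reflection property once more with $\rho=\nu$ gives $(C^{(n)})^{V_\nu}=C^{(n)}\cap\nu$, whence $j(\gamma)\in C^{(n)}$. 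Thus $j$ witnesses $(\alpha,\mu,n)$-choiceless extendibility$^*$, and since $\alpha$ and $\mu$ were arbitrary, $\gamma$ is $n$-choiceless extendible$^*$.

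The main obstacle is the careful justification of the reflection identity $(C^{(n)})^{V_\rho}=C^{(n)}\cap\rho$ for $\rho\in C^{(n)}$ and its interaction with the complexity of the defining formula of $C^{(n)}$ discussed above — recall that without $\AC$ the class $C^{(1)}$ is only $\Delta_2$ rather than $\Pi_1$. One must verify that the internal computation of $C^{(n)}$ inside $V_\mu$ and $V_\nu$, carried out with the relativized satisfaction relation, genuinely agrees with the true $C^{(n)}$ below those ordinals; this is precisely where the hypotheses $\mu,\nu\in C^{(n)}$, i.e.\ $V_\mu\prec_{\Sigma_n}V$ and $V_\nu\prec_{\Sigma_n}V$, are used, and it is the reason the argument needs $n\geq 1$.
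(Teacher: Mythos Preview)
Your proof is correct and follows exactly the paper's approach: the paper proves the same cycle (i)$\Rightarrow$(ii)$\Rightarrow$(iii)$\Rightarrow$(i), citing Lemmas~\ref{star_choiceless_extendible_to_choiceless_supercompact} and~\ref{choiceless_supercompact_to_choiceless_extendible} for the last two arrows and saying (i)$\Rightarrow$(ii) is ``clear using Proposition~\ref{correctness_of_choiceless_extendibles},'' which is precisely the argument you spell out. One small inaccuracy: the restriction $n\geq 1$ in the theorem is not needed for your (i)$\Rightarrow$(ii) step (the reflection identity $(C^{(n)})^{V_\rho}=C^{(n)}\cap\rho$ holds for all $n\geq 0$ by the semantic argument you give, and is trivial for $n=0$); rather, $n\geq 1$ is required for (iii)$\Rightarrow$(i) via Lemma~\ref{choiceless_supercompact_to_choiceless_extendible}, as the paper notes immediately after that lemma.
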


\begin{proof}
    (i) implies (ii): Clear using Proposition~\ref{correctness_of_choiceless_extendibles}.
    (ii) implies (iii): Lemma~\ref{star_choiceless_extendible_to_choiceless_supercompact}.
    (iii) implies (i): Lemma~\ref{choiceless_supercompact_to_choiceless_extendible}.
\end{proof}

%%%%%%%%%%%%%%%%%%%%%%%%%%%%%%%%%%%%%%%%%%%%%%%%%%
%%%%%%%%%%%%%%%%%%%%%%%%%%%%%%%%%%%%%%%%%%%%%%%%%%
%%%%%%%%%%%%%%%%%%%%%%%%%%%%%%%%%%%%%%%%%%%%%%%%%%

\section{VP and Choiceless Extendible Cardinals}\label{section_vopenka_and_choiceless_extendibles}

Recall that \emph{Vop\v enka's Principle} is the axiom schema stating that for every proper class $\mathcal{C}$ of structures of the same type that is definable, with parameters, there exist $A\neq B$ in $\mathcal{C}$ such that $A$ is elementarily embeddable into $B.$

Say that a class $\mathcal{C}$ is $\Sigma_n(X)$ (respectively $\Pi_n(X)$) for some class (or set) $X$ iff $\mathcal{C}$ is definable with parameters from $X$ by a $\Sigma_n$ (respectively $\Pi_n$) formula. The boldface symbols $\mathbf{\Sigma_n}$ and $\mathbf{\Pi_n}$ are used in place of $\Sigma_n(V)$ and $\Pi_n(V),$ respectively, and the lightface symbols $\Sigma_n$ and $\Pi_n$ are used in place of $\Sigma_n(\emptyset)$ and $\Pi_n(\emptyset),$ respectively.

Let $\Gamma$ be a placeholder for the symbols $\mathbf{\Sigma_n}, \mathbf{\Pi_n}, \Sigma_n, \Pi_n, \Sigma_n(X), \Pi_n(X).$ We say that $\VP(\Gamma)$ holds iff Vopenka's Principle holds for any proper class $\mathcal{C}$ (of structures of the same type) that is $\Gamma.$ The notation $\VP(X)$ for a class of parameters $X$ is used iff $\VP(\Pi_n(X))$ holds for all $n.$
We will also use $\VP$ and $\boldVP$ in place of $\VP(\emptyset)$ and $\VP(V),$ respectively.

The failure of $\VP(\Gamma)$ will be denoted by $\neg\VP(\Gamma).$ Notice that, although $\VP(\Gamma)$ is an axiom schema, $\neg\VP(\Gamma)$ can be stated by a single axiom.

\begin{comment}
    $\neg\VP(\Gamma)$ is just a single axiom stating that there are no two distinct structures in $\mathcal{C}$ such that one can be elementarily embedded into the other, where $\mathcal{C}$ is a fixed proper class (of structures of the same type) that is $\Gamma.$ 
\end{comment}

\begin{theorem}\label{equivalence_of_vp_and_n_choiceless_extendibles}
    For $n\geq1,$ the following are equivalent:
    \begin{enumerate}
        \item $\VP(\Pi_{n+1}(\OR)).$
        \item There is a proper class of $n$-choiceless extendible cardinals.
        \item $\VP(\mathbf{\Pi_{n+1}}).$
    \end{enumerate}
\end{theorem}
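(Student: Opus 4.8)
The plan is to prove the cycle of implications (iii)$\implies$(iii) trivially, (i)$\implies$(ii), and (ii)$\implies$(iii), establishing all three equivalences. Since $\Pi_{n+1}(\OR)$ classes are a special case of $\Pi_{n+1}(V)$ classes, the implication (iii)$\implies$(i) is immediate from the definitions. Thus the real content lies in the other two implications, and I would structure the argument to mirror Bagaria's proof of Theorem~\ref{bagaria_cn_extendibles_and_vp}, replacing $C^{(n)}$-extendible cardinals with $n$-choiceless extendible cardinals and being careful to use only ordinal parameters where the hypothesis demands it.

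For (ii)$\implies$(iii), I would argue by contradiction. Suppose $\boldVP(\mathbf{\Pi_{n+1}})$ fails; then there is a single $\Pi_{n+1}$ formula (with some set parameter) defining a proper class $\mathcal{C}$ of structures of the same type admitting no nontrivial elementary embedding between distinct members. The standard trick is to code each structure by the ordinal rank at which it first appears, producing a definable (proper class) function from $\OR$ into $\mathcal{C}$. Now take an $n$-choiceless extendible cardinal $\gamma$ large enough that the set parameter lies in $V_\gamma$ and, crucially, larger than the rank of some fixed structure $A\in\mathcal{C}$. Using the correctness of $\gamma$ (Proposition~\ref{correctness_of_choiceless_extendibles} gives $\Sigma_{n+2}$-correctness, which handles the $\Pi_{n+1}$ definability) together with an elementary embedding $j:V_\mu\to V_\nu$ with critical point above the rank of $A$ and $j(\gamma)>\mu$, I would produce two distinct members of $\mathcal{C}$ with an embedding between them: the image $j(B)$ of a suitable $B\in\mathcal{C}$ of rank between $\crit j$ and $\mu$ gives $B\neq j(B)$, and $j\restriction B$ is the desired elementary embedding $B\to j(B)$, both structures lying in $\mathcal{C}$ because the defining formula is preserved. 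The $\Pi_{n+1}$-correctness of the relevant ranks is exactly what the $\Sigma_{n+2}$-correctness of $\gamma$ buys us.

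For (i)$\implies$(ii), the harder direction, I would again argue by contradiction: suppose there is an ordinal $\xi$ above which no cardinal is $n$-choiceless extendible. The goal is to manufacture from this failure a $\Pi_{n+1}(\OR)$-definable proper class of structures that refutes $\boldVP$. The idea is to attach to each ordinal $\gamma>\xi$ a structure encoding a witness to the non-$n$-choiceless-extendibility of $\gamma$ — that is, encoding a specific pair $(\alpha,\mu)$ with $\mu\in C^{(n)}$ for which no appropriate embedding $j:V_\mu\to V_\nu$ exists. One builds structures of the form $\langle V_\gamma, \in, \{\alpha_\gamma\}, \dots\rangle$ (of a common type) so that an elementary embedding between two such structures would transfer a witnessing embedding and thereby contradict the chosen failure of extendibility at the larger index; the key is that $C^{(n)}$ is $\Pi_n$ (for $n\geq 2$; $\Delta_2$ for $n=1$) so membership can be expressed within the allowed complexity, and only ordinal parameters are needed because the failure threshold $\xi$ and the canonical witnesses are definable from ordinals alone. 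Applying $\boldVP(\Pi_{n+1}(\OR))$ to this class yields an embedding that pushes the extendibility witness upward, contradicting that no $\gamma>\xi$ is $n$-choiceless extendible.

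The main obstacle will be the bookkeeping in (i)$\implies$(ii): one must ensure that the class of coding structures is genuinely $\Pi_{n+1}$ and uses only \emph{ordinal} parameters, while simultaneously guaranteeing that a Vop\v enka embedding between two structures really does yield an elementary embedding $j:V_\mu\to V_\nu$ of the right shape (correct critical point, $j(\gamma)>\mu$, and landing target in $C^{(n)}$) rather than merely an abstract embedding of structures. Absent $\AC$, one cannot freely well-order witnesses, so the coding must be canonical — chosen by least rank or least ordinal witness — and the verification that the resulting embedding respects $C^{(n)}$ must lean on the correctness properties of the ambient levels, precisely as in the supporting lemmas of Section~\ref{section_choiceless_extendibles_and_supercompacts}. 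Managing the complexity calculation for $C^{(1)}$, where the definition is only $\Delta_2$ rather than $\Pi_1$, may require a separate remark or a slightly different coding.
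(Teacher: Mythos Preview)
Your plan for (ii)$\implies$(iii) has a real gap. Working with an extendibility embedding $j:V_\mu\to V_\nu$ (the definition gives only $\nu\in C^{(n)}$), you take $B\in\mathcal C$ of rank in $(\crit j,\mu)$ and assert $j(B)\in\mathcal C$. But the defining formula $\phi$ is $\Pi_{n+1}$: even if $\mu\in C^{(n+1)}$ so that $V_\mu\models\phi(B,p)$, elementarity yields only $V_\nu\models\phi(j(B),p)$, and from $\nu\in C^{(n)}$ you get downward absoluteness of $\Pi_{n+1}$ to $V_\nu$, not upward. So $V\models\phi(j(B),p)$ does not follow. The $\Sigma_{n+2}$-correctness of $\gamma$ cannot rescue this, since $j(B)$ has rank above $\mu>\gamma$. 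The paper sidesteps the issue by invoking Theorem~\ref{equivalence_of_choiceless_extendibles_and_choiceless_supercompacts} to pass to $(n{+}1)$-choiceless supercompactness: one obtains $j:V_{\bar\lambda}\to V_\lambda$ with \emph{both} $\bar\lambda,\lambda\in C^{(n+1)}$, chooses $B\in\mathcal C$ of large rank, and reflects \emph{down} to $\bar B$ with $j(\bar B)=B$; then $\bar B\in\mathcal C$ follows from $\bar\lambda\in C^{(n+1)}$. The direction of the reflection is exactly what makes the correctness bookkeeping close.

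For (i)$\implies$(ii) you correctly flag the obstacle but your proposed fix does not meet it. Attaching to each $\gamma>\xi$ a canonical witness $(\alpha_\gamma,\mu_\gamma)$ leaves $\alpha_\gamma$ varying with $\gamma$, so a Vop\v enka embedding between two coding structures need not have critical point above the relevant $\alpha$, and no forbidden extendibility embedding is produced. The paper's device is different: fix a \emph{single} $\alpha$, assume no $\gamma>\alpha$ is $\alpha$-$n$-choiceless extendible, and use structures $(V_\beta,D(\alpha,n)\cap\beta,\xi)_{\xi\le\alpha}$ whose language carries a constant for every $\xi\le\alpha$. These constants force $\crit j>\alpha$ outright, while the closure class $D(\alpha,n)$ guarantees a bad $\mu\in C^{(n)}$ already lies below $\beta$, so that restricting $j$ yields the contradiction. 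One then iterates $\omega$ times to pass from $\alpha$-$n$-choiceless extendibles above every $\alpha$ to a genuine $n$-choiceless extendible. Your worry about $C^{(1)}$ is unnecessary: the relevant non-embeddability statement is $\Pi_{n+1}$ uniformly for $n\ge 1$.
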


\begin{proof}
(i) implies (ii): We will show that $\VP(\Pi_{n+1}(\OR))$ implies that, for any $\alpha,$ there is a $\gamma_\alpha>\alpha$ that is $\alpha$-$n$-choiceless extendible. Given this, we can then construct the sequence $\langle \alpha_n\rangle_{n\in\omega}$ by letting $\alpha_0=\alpha$ and $\alpha_{n+1}=\gamma_{\alpha_n}.$ Clearly then $\delta=\lim \alpha_n$ will be a proper $n$-choiceless extendible cardinal. Moreover, since $\delta>\alpha$ and $\alpha$ was arbitrary, it will follow that there is a proper class of $n$-choiceless extendible cardinals.

So, let $\alpha$ be an arbitrary fixed ordinal and suppose, towards a contradiction, that no ordinal $\gamma>\alpha$ is $\alpha$-$n$-choiceless extendible. This means that for any ordinal $\gamma>\alpha$ there exists $\mu>\gamma$ in $C^{(n)}$ for which the following holds:
$$
\neg \exists j:V_\mu\prec V_\nu(\crit j > \alpha \wedge j(\gamma)>\mu \wedge V_\nu\prec_{\Sigma_n} V).
$$
Let $\psi(\alpha,\gamma,\mu)$ the displayed formula above. Its complexity, for $n\geq1,$ is $\Pi_{n+1}.$ Define $D(\alpha,n)$ to be the set of all $\beta$ such that $\beta$ is a limit ordinal above $\alpha$ and for every $\gamma$ strictly between $\alpha$ and $\beta$ there already is a $\mu\in C^{(n)}$ also below $\beta$ for which $\psi(\alpha,\gamma,\mu)$ holds. Formally, 
\begin{multline*}
    \beta\in D(\alpha,n)\iff \beta\in(\operatorname{Lim}(\OR)-(\alpha+1)) \\
        \wedge \forall \gamma(\alpha<\gamma<\beta) \exists\mu < \beta (\mu\in C^{(n)}\wedge\psi(\alpha,\gamma,\mu)).
\end{multline*}
It is easy to see that $D(\alpha,n)$ is a club subclass of $\operatorname{Lim}(C^{(n)})$ that is $\Pi_{n+1}$-definable. Now let $\mathcal{C}$ be the class of structures of the form $(V_\beta, D(\alpha,n)\cap\beta, \xi)_{\xi\in\alpha}$ such that $\beta\in D(\alpha,n)$ and $D(\alpha,n)\cap\beta\in V_\beta$ (this last condition ensures that $D(\alpha,n)\cap\beta$ is bounded below $\beta$ so that $\beta$ is not a limit of ordinals in $D(\alpha,n)).$

The class $\mathcal{C}$ is $\Pi_{n+1}(\OR)$ so we apply $\VP(\Pi_{n+1}(\OR))$ and get an elementary embedding
$$
j:(V_{\beta_1}, D(\alpha,n)\cap\beta_1,\xi)_{\xi\leq\alpha}\rightarrow (V_{\beta_2},D(\alpha,n)\cap\beta_2,\xi)_{\xi\leq\alpha}
$$
where $\beta_1\neq\beta_2.$ Let $\sigma_i=\sup(D(\alpha,n)\cap\beta_i)$ for $i=1,2.$  Notice that the $\sigma_i$ are both in $D(\alpha,n)$ and that $j(\sigma_1)=\sigma_2.$ Since each $\sigma_i$ is uniquely identified by their respective $\beta_i$ and since $\beta_1\neq \beta_2,$ we also have $\sigma_1\neq \sigma_2.$ In particular, $j$ is not the identity. We have $j(\xi)=\xi$ for all $\xi\leq \alpha$ due to the constants $\xi$ for all $\xi\leq\alpha,$ so the critical point of $j$ must be above $\alpha.$  As $\beta_1\in D(\alpha,n)\subset \operatorname{Lim}(C^{(n)}),$ there are $\mu\in C^{(n)}$ arbitrarily high in $\beta_1.$ For any such $\mu,\ j(\mu)$ is in $C^{(n)}$ as well, by elementarity of $j:V_{\beta_1}\rightarrow V_{\beta_2}$ and the fact that the $\beta_i$ are both themselves in $C^{(n)}.$ Thus, the restriction of $j$ to $V_\mu$ for any $\mu\in C^{(n)}$ with $\sigma_1<\mu<\beta_1$ will give us an elementary embedding $j\vert_{V_\mu}:V_\mu\rightarrow V_{j(\mu)}$ with $\crit(j\vert_{V_\mu}) >\alpha,\ j\vert_{V_\mu}(\sigma_1)=\sigma_2\geq \beta_1>\mu,$ and $j(\mu)\in C^{(n)}.$ But this cannot be possible since we know that, as $\beta_1\in D(\alpha,n),$ for some $\mu\in C^{(n)}$ with $\sigma_1<\mu<\beta_1$ the formula $\psi(\alpha,\sigma_1,\mu)$ holds, hence such an elementary embedding cannot exist.

(ii) implies (iii): Let $\mathcal{C}$ be a proper class of structures of the same type $\tau$ that is $\mathbf{\Pi_{n+1}}.$ Let $\gamma$ be $n$-choiceless extendible and sufficiently large so that there exists some $\alpha<\gamma$ such that $\tau$ along with any parameters $p$ of some defining $\Pi_{n+1}$ formula for $\mathcal{C}$ are all in $V_\alpha.$ Fix such an $\alpha.$ Using Theorem~\ref{equivalence_of_choiceless_extendibles_and_choiceless_supercompacts} we know that $\gamma$ is $n+1$-choiceless supercompact. Let $B\in \mathcal{C}$ have rank above $\gamma$ and let $\lambda\in C^{(n+1)}$ be an ordinal above this rank. By $n+1$-choiceless supercompactness let $j:V_{\bar\lambda}\rightarrow V_\lambda$ be an elementary embedding with $\bar\lambda\in C^{(n+1)},\ j(\bar B)=B,$ and $\crit j>\alpha.$ By correctness of $\bar\lambda$ and elementarity of $j$ we must have $\bar B\in \mathcal{C},$ and obviously $\bar B\neq B$ by considering their respective ranks. Hence, the restriction of $j$ to $\bar B$ is an elementary embedding from $\bar B$ into $B,$ and we are done.

(iii) implies (i): Trivial.
\end{proof}

\begin{corollary}\label{vp_to_boldvp}
    The following are equivalent:
        \item \textup{(i)} $\VP,$ \textup{(ii)} $\VP(\OR),$ and \textup{(iii)} $\boldVP.$
\end{corollary}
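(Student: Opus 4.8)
The plan is to run the cycle (iii) $\Rightarrow$ (ii) $\Rightarrow$ (i) $\Rightarrow$ (iii). The first two implications are immediate from monotonicity of $\VP(\Gamma)$ in the collection $\Gamma$ of admissible defining formulas: every class definable without parameters is definable with ordinal parameters, and every class definable with ordinal parameters is definable with arbitrary parameters, so $\boldVP=\VP(V)$ implies $\VP(\OR)$ implies $\VP=\VP(\emptyset)$. All the content is in closing the loop, which I would split as $\VP \Rightarrow \VP(\OR)$ and $\VP(\OR) \Rightarrow \boldVP$.

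For $\VP(\OR) \Rightarrow \boldVP$ I would simply feed the hypothesis into Theorem~\ref{equivalence_of_vp_and_n_choiceless_extendibles}. By definition $\VP(\OR)$ asserts $\VP(\Pi_{n+1}(\OR))$ for every $n$, so the equivalence of (i) and (iii) in that theorem yields $\VP(\mathbf{\Pi_{n+1}})$ for every $n\geq 1$. Since any proper class of structures of the same type that is definable with parameters is $\mathbf{\Pi_{n+1}}$ for some $n\geq 1$ (a $\mathbf{\Sigma_k}$\-/ or $\mathbf{\Pi_k}$\-/class is $\mathbf{\Pi_{k+1}}$, and one may always raise the level), this gives $\boldVP$.

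The heart of the proof is $\VP \Rightarrow \VP(\OR)$, and here the obstacle is precisely that an ordinal parameter must be removed before lightface $\VP$ can be applied. I would use a least-counterexample reflection. Fix $n$ and a $\Pi_{n+1}$\-/formula $\varphi(x,\xi)$ with a single ordinal variable $\xi$; it suffices to show that whenever an ordinal $\xi_0$ makes $\mathcal{C}_{\xi_0}=\{x:\varphi(x,\xi_0)\}$ a proper class of structures of one type, $\mathcal{C}_{\xi_0}$ admits an elementary embedding between two distinct members. Suppose not, so that for some ordinal the class $\mathcal{C}_{\xi_0}$ is a counterexample to Vop\v enka's Principle. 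For the fixed $\varphi$ the predicate ``$\xi$ makes $\mathcal{C}_\xi$ a proper-class counterexample of a single type'' carries no further parameters, so the least such ordinal $\xi^{*}$ is definable without parameters. Inlining this definition turns $\mathcal{C}_{\xi^{*}}$ into a lightface-definable proper class of structures of a single (definable, set-sized) type which is still a counterexample, contradicting lightface $\VP$. Hence no bad $\xi_0$ exists; as $\varphi$ and $n$ were arbitrary, $\VP(\OR)$ follows.

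The subtlety I expect to be decisive, and would check carefully, is that this minimization genuinely produces a lightface object: one must confirm that, relative to the fixed $\varphi$, being a proper-class counterexample of a single type is a parameter-free property of $\xi$, so that $\xi^{*}$ and hence $\mathcal{C}_{\xi^{*}}$ are definable without parameters, and that the complexity of $\mathcal{C}_{\xi^{*}}$, although possibly raised above $\Pi_{n+1}$ by the minimization, remains at some finite level --- which is harmless because $\VP$ abbreviates lightface $\VP(\Pi_m(\emptyset))$ for all $m$ at once. Note also that the type of $\mathcal{C}_{\xi^{*}}$ may be large (for instance with $\xi^{*}$-many constants) but is still a set, so $\VP$ applies. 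Crucially, this device uses only the well-ordering of the ordinals and so does not absorb arbitrary set parameters; that is exactly why the final step $\VP(\OR)\Rightarrow\boldVP$ must instead be routed through the reflection supplied by the choiceless extendible cardinals of Theorem~\ref{equivalence_of_vp_and_n_choiceless_extendibles}.
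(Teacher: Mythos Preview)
Your proposal is correct and follows essentially the same route as the paper: the trivial implications come from monotonicity, $\VP \Rightarrow \VP(\OR)$ is proved by taking the least ordinal parameter witnessing a failure (making the counterexample lightface-definable), and $\VP(\OR) \Rightarrow \boldVP$ is obtained by invoking Theorem~\ref{equivalence_of_vp_and_n_choiceless_extendibles} at every level $n$. Your additional remarks on the complexity bump from the minimization and on the size of the type are correct refinements, but the core argument matches the paper's.
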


\begin{proof}
    (i) implies (ii): Let $\phi(x,\alpha)$ be a formula that defines, for some ordinal $\alpha,$ a class of structures for which $\VP(\OR)$ fails. Notice that the least $\alpha$ for which this happens is definable without parameters. Thus, $\VP$ fails too.

    (ii) implies (iii) by the previous theorem, and (iii) implies (i) trivially.
\end{proof}

%%%%%%%%%%%%%%%%%%%%%%%%%%%%%%%%%%%%%%%%%%%%%%%%%%
%%%%%%%%%%%%%%%%%%%%%%%%%%%%%%%%%%%%%%%%%%%%%%%%%%
%%%%%%%%%%%%%%%%%%%%%%%%%%%%%%%%%%%%%%%%%%%%%%%%%%

\section{Berkeley Cardinals}\label{section_berkeley_and_rank_berkeley_cardinals}

We start by recalling the definition of and some basic facts about Berkeley cardinals from \cite{lcbc}. 

\begin{definition}[\cite{lcbc}]\label{def:berkeley}
    A cardinal $\delta$ is \emph{$\zeta$-proto Berkeley,} for some ordinal $\zeta<\delta,$ iff for all transitive $M$ with $\delta\in M$ there is an elementary embedding $j:M\rightarrow M$ with $\zeta<\crit{j}<\delta.$ A cardinal $\delta$ is \emph{Berkeley} iff it is $\zeta$-proto Berkeley for all $\zeta<\delta.$
\end{definition}

\begin{lemma}[\cite{lcbc}]\label{proto_berkeley_fix_any_b_trick}
    A cardinal $\delta$ is $\zeta$-proto Berkeley iff for all transitive $M$ with $\delta\in M$ and all $b\in M$ there is an elementary embedding $j:M\rightarrow M$ with $\zeta<\crit{j}<\delta$ and $j(b)=b.$
\end{lemma}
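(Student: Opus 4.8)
The plan is to prove the nontrivial (left\-/to\-/right) direction by a \emph{coding} trick, since the reverse implication is immediate: the right\-/hand condition, instantiated at a single element (say $b=\delta$, which lies in $M$), already hands us an elementary $j:M\to M$ with $\zeta<\crit{j}<\delta,$ and this is exactly what $\zeta$-proto Berkeleyness of $\delta$ demands. So all the content is in the forward direction.

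For that direction, fix a transitive $M$ with $\delta\in M$ and a target $b\in M$; I want a self\-/embedding of $M$ that \emph{fixes} $b.$ The difficulty is that $\zeta$-proto Berkeleyness only produces \emph{some} elementary $j:M\to M,$ and such a $j$ may well move $b.$ Because transitive sets are rigid, I cannot relocate $b$ by re\-/presenting $M$; instead I will force the witnessing embedding to fix $b$ by making $b$ \emph{definable} in the structure to which I apply the principle, and I do this by packing both $M$ and $b$ into a single point. Let $\bar M=\tc{\{\langle M,b\rangle\}},$ the transitive closure of the singleton of the Kuratowski pair $\langle M,b\rangle.$ Then $\bar M$ is a transitive \emph{set}, and since $M\in\bar M$ with $\bar M$ transitive we get $M\subseteq\bar M,$ hence $\delta\in\bar M.$

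Now apply $\zeta$-proto Berkeleyness to $\bar M$ to obtain an elementary $j:\bar M\to\bar M$ with $\zeta<\crit{j}<\delta.$ The key point is that $\langle M,b\rangle$ is the unique $\in$-maximal element of $\bar M$: by Foundation no set belongs to its own transitive closure, so $\langle M,b\rangle$ is the unique element of $\bar M$ that is not a member of any element of $\bar M.$ This is a first\-/order property, so $j$ fixes $\langle M,b\rangle,$ and since the two coordinates of a Kuratowski pair are first\-/order definable from the pair, $j(M)=M$ and $j(b)=b.$ Finally, $j(M)=M$ yields a map $j\vert_M:M\to M,$ and the usual relativization argument (using that $M$ is a definable element of $\bar M$) shows this restriction is elementary; its critical point is $\crit{j},$ which lies in $(\zeta,\delta)$ because $\delta\in M$ forces $\crit{j}\in M.$

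The main thing to get right is the bookkeeping of the last paragraph: verifying that ``is $\in$-maximal'' and ``is the first (resp. second) Kuratowski coordinate of'' really are expressible by $\in$-formulas, so that elementarity transfers them, and that $j\vert_M$ is genuinely elementary with the same critical point as $j.$ I expect no serious obstacle here, but I want to stress what makes the argument \emph{choiceless}: nowhere do I well\-/order or ordinal\-/code $b,$ so the whole proof goes through in $\ZF,$ the pairing trick fixing $b$ purely by definability.
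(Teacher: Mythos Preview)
Your proof is correct and follows the standard coding argument: pack $M$ and $b$ into a single transitive set in which both are definable, apply $\zeta$-proto Berkeleyness there, and read off the fixing of $b$ from definability. The paper does not supply its own proof of this lemma but simply cites \cite{lcbc}, where precisely this trick (up to cosmetic variation in how one encodes the pair) is used; so your approach matches the intended one.

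One small point worth tightening: when you say ``its critical point is $\crit{j},$ which lies in $(\zeta,\delta)$ because $\delta\in M$ forces $\crit{j}\in M$,'' you should also note explicitly that $j(M)=M$ guarantees $j(\crit{j})\in M$, so that $j\vert_M$ is genuinely nontrivial with the same critical point. You hint at this, but since the whole argument hinges on $j\vert_M$ being a \emph{nontrivial} self-embedding of $M$, it is worth saying outright.
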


\begin{proposition}[\cite{lcbc}]\label{berkeley_from_proto}
    For any fixed ordinal $\zeta,$ the least $\zeta$-proto Berkeley cardinal, if it exists, is also a Berkeley cardinal.
\end{proposition}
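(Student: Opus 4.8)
The plan is to argue by contradiction, exploiting the minimality of $\delta$ together with the parameter-fixing trick of Lemma~\ref{proto_berkeley_fix_any_b_trick}. Write $\delta$ for the least $\zeta$-proto Berkeley cardinal and suppose it is not Berkeley. First I would note that $\delta$ is automatically $\eta$-proto Berkeley for every $\eta\le\zeta$ (any embedding with $\zeta<\crit{j}<\delta$ also has $\eta<\crit{j}$), so the set of $\eta<\delta$ for which $\delta$ fails to be $\eta$-proto Berkeley is contained in $(\zeta,\delta)$; let $\eta$ be its least element. Fix a transitive witness $M\ni\delta$ for this failure, i.e.\ one admitting no elementary $j\colon M\to M$ with $\eta<\crit{j}<\delta$. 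The goal is to show that $\eta$ is itself a $\zeta$-proto Berkeley cardinal, which contradicts the minimality of $\delta$ since $\eta<\delta$.

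The heart of the argument is a reflection. Given an arbitrary transitive $N\ni\eta$, I would choose a transitive set $P$ containing $\delta$, $M$, $N$, $\eta$ (e.g.\ a suitable $V_\lambda$), and apply the $\zeta$-proto Berkeleyness of $\delta$ to $P$. Using Lemma~\ref{proto_berkeley_fix_any_b_trick} with the single parameter $b=\langle M,N,\eta\rangle$, I obtain an elementary $j\colon P\to P$ with $\zeta<\crit{j}<\delta$ and $j(M)=M$, $j(N)=N$, $j(\eta)=\eta$. Set $\kappa=\crit{j}$. Since $j(M)=M$ and $M$ is transitive, $j\upharpoonright M\colon M\to M$ is elementary with critical point $\kappa$ (note $\kappa<\delta$, so $\kappa\in M$); the witnessing property of $M$ then forces $\kappa\le\eta$. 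On the other hand $j(\eta)=\eta$ while $j(\kappa)>\kappa$, so $\kappa\neq\eta$, and hence $\zeta<\kappa<\eta$. Because $j(N)=N$ and $\kappa\in\eta\subseteq N$, the restriction $j\upharpoonright N\colon N\to N$ is elementary with critical point $\kappa$, so it witnesses exactly what is needed for $N$. As $N$ was arbitrary, $\eta$ satisfies the defining property of $\zeta$-proto Berkeleyness.

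It remains to check that $\eta$ is a cardinal, so that it genuinely contradicts the minimality of $\delta$ as a $\zeta$-proto Berkeley cardinal. Here I would use the standard fact that the critical point of an elementary self-embedding of a $V_\lambda$ is a cardinal: if $\eta$ were not a cardinal, then $\delta$, being $|\eta|$-proto Berkeley (as $|\eta|<\eta$), would for every transitive $M'\ni\delta$ yield, via Lemma~\ref{proto_berkeley_fix_any_b_trick} applied inside a large $V_\lambda\ni M'$, an embedding whose critical point is a cardinal $>|\eta|$, hence $>\eta$; this would make $\delta$ in fact $\eta$-proto Berkeley, contradicting the choice of $\eta$. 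Thus $\eta$ is a cardinal and we are done.

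I expect the main obstacle to be pinning the critical point $\kappa$ into the open interval $(\zeta,\eta)$: the lower bound comes for free from $\zeta$-proto Berkeleyness, but the strict upper bound $\kappa<\eta$ relies on the delicate interplay between the witness $M$ (which pushes $\kappa$ down to $\le\eta$) and the fixing of $\eta$ as a parameter (which rules out $\kappa=\eta$). Getting this pairing right, while simultaneously arranging $j$ to fix the arbitrary target $N$ so that $j\upharpoonright N$ is a genuine self-embedding with critical point $\kappa$, is the crux; the cardinality of $\eta$ is a comparatively routine afterthought.
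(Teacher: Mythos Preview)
The paper does not give its own proof of this proposition; it is cited from \cite{lcbc}. Your argument is correct and follows the standard route---indeed, it is the natural analogue of the proof the paper \emph{does} supply for the rank-Berkeley version (Proposition~\ref{rank_berkeley_from_proto}), with Lemma~\ref{proto_berkeley_fix_any_b_trick} playing the role that Lemma~\ref{iterate_to_fix} plays there. Your extra care in verifying that the witness $\eta$ is a cardinal is a point the paper's proof of Proposition~\ref{rank_berkeley_from_proto} glosses over for its own witness $\alpha$.
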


For our purposes, we will also be interested in a somewhat weakened version of Berkeley cardinals. We will simply restrict the definitions to transitive sets $M$ of the form $V_\lambda$ for some $\lambda.$

\begin{definition}\label{def:rank_berkeley}
    A cardinal $\delta$ is \emph{$\zeta$-proto rank-Berkeley,} for some ordinal $\zeta<\delta,$ iff for all $\lambda>\delta$ there is an elementary embedding $j:V_\lambda \rightarrow V_\lambda$ with $\zeta<\crit{j}<\delta$ and $j(\delta)=\delta.$ A cardinal $\delta$ is \emph{rank-Berkeley} iff it is $\zeta$-proto rank-Berkeley for all $\zeta<\delta.$
\end{definition}

Let $\mathcal{E}_\lambda$ denote the set of all nontrivial elementary embeddings $j:V_\lambda\rightarrow V_\lambda$ and let $\mathcal{E}_\lambda^\delta=\{j\in\mathcal{E}_\lambda\mid \crit{j}<\delta \text{ and } j(\delta)=\delta\}.$
We want to have an analogue of Lemma~\ref{proto_berkeley_fix_any_b_trick} which will allow us to impose the extra condition of fixing an arbitrary ordinal $\alpha\in V_\lambda$ on $j:V_\lambda\rightarrow V_\lambda.$ For this, we first need to define an operation from $\mathcal{E}_\lambda^\delta \times \mathcal{E}_\lambda^\delta$ to $\mathcal{E}_\lambda^\delta.$

\begin{definition}
    If $\lambda$ is a limit ordinal, then for any $j,k:V_\lambda\rightarrow V_\lambda$ define the operation $j[k],$ the \emph{application of $j$ to $k,$} by setting $j[k] = \bigcup_{\gamma<\lambda}j(k\vert_{V_\gamma}).$
\end{definition}

The following lemma is similar to Lemma~1.6 in \cite{dehornoy}.

\begin{lemma}\label{application_op_and_critical_point}
    If $\lambda$ is a limit and $j,k\in \mathcal{E}_\lambda^\delta,$ then $j[k]$ is also in $\mathcal{E}_\lambda^\delta.$ Moreover, $\crit j[k] = j(\crit{k}).$
\end{lemma}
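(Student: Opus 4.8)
The plan is to realize $j[k]$ as the directed union of the functions $j(k\vert_{V_\gamma})$ and then to transfer the defining properties of $k$ through $j$ one first-order sentence at a time. I would begin with the structural bookkeeping. Since $\lambda$ is a limit and $k$ carries $V_\gamma$ into $V_{k(\gamma)}$, each $k\vert_{V_\gamma}$ has domain and pointwise image bounded in $V_\lambda$, hence is itself an element of $V_\lambda$; thus $j(k\vert_{V_\gamma})$ is defined and lies in $V_\lambda$. From $\gamma<\gamma'\Rightarrow k\vert_{V_\gamma}\subseteq k\vert_{V_{\gamma'}}$ and elementarity of $j$ I obtain an increasing chain $j(k\vert_{V_\gamma})\subseteq j(k\vert_{V_{\gamma'}})$, so $j[k]$ is a function and $j[k]\vert_{V_{j(\gamma)}}=j(k\vert_{V_\gamma})$. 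Since $\domain{k\vert_{V_\gamma}}=V_\gamma$ forces $\domain{j(k\vert_{V_\gamma})}=j(V_\gamma)=V_{j(\gamma)}$, and $j(\gamma)\geq\gamma$ gives $\sup_{\gamma<\lambda}j(\gamma)=\lambda$, I conclude $\domain{j[k]}=V_\lambda$ with range contained in $V_\lambda$.

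The crux, and the step I expect to be the main obstacle, is elementarity, because elementarity of $k$ is a schema that must be repackaged as a single formula for $j$ to act on. Fixing an $\in$-formula $\varphi(\bar x)$ and a tuple $\bar a\in V_\lambda$, I would choose $\gamma<\lambda$ with $\bar a\in V_\gamma$ and consider the single sentence $S(f)$, depending on this fixed $\varphi$ and on a parameter $f$, asserting that $f$ is a function and that $\varphi(\bar x)\leftrightarrow\varphi(f(\bar x))$ holds for every $\bar x\in\domain{f}$. Evaluated in $V_\lambda$ with $f=k\vert_{V_\gamma}$, this sentence holds precisely because $k$ is elementary. Applying elementarity of $j$ then yields $V_\lambda\satisfies S[\,j(k\vert_{V_\gamma})\,]$, that is $V_\lambda\satisfies S[\,j[k]\vert_{V_{j(\gamma)}}\,]$; instantiating at $\bar a\in V_\gamma\subseteq V_{j(\gamma)}$ produces $V_\lambda\satisfies\varphi[\bar a]\Leftrightarrow V_\lambda\satisfies\varphi[\,j[k](\bar a)\,]$. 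As $\varphi$ and $\bar a$ were arbitrary and $\domain{j[k]}=V_\lambda$, this shows $j[k]$ is an elementary self-embedding of $V_\lambda$.

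Finally I would pin down the critical point and verify membership in $\mathcal{E}_\lambda^\delta$. Writing $\kappa=\crit k$ and fixing $\gamma>\kappa$, the sentence ``$\kappa$ is the least ordinal moved by $k\vert_{V_\gamma}$'' is first-order in the parameter $k\vert_{V_\gamma}$, so $j$ transfers it to ``$j(\kappa)$ is the least ordinal moved by $j(k\vert_{V_\gamma})=j[k]\vert_{V_{j(\gamma)}}$''. Because $j(\kappa)<j(\gamma)$ and $j[k]$ agrees with this restriction below $j(\gamma)$, the map $j[k]$ fixes every ordinal below $j(\kappa)$ and moves $j(\kappa)$, giving $\crit(j[k])=j(\kappa)=j(\crit k)$; in particular $j[k]$ is nontrivial. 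Since $\kappa<\delta$ and $j(\delta)=\delta$, this yields $\crit(j[k])=j(\kappa)<\delta$. For $j[k](\delta)=\delta$, I pick $\gamma>\delta$ and note that the pair $(\delta,\delta)=(\delta,k(\delta))$ belongs to $k\vert_{V_\gamma}$, so its $j$-image $(j(\delta),j(\delta))=(\delta,\delta)$ belongs to $j(k\vert_{V_\gamma})$, whence $j[k](\delta)=\delta$. Assembling these facts, $j[k]\in\mathcal{E}_\lambda^\delta$ and $\crit(j[k])=j(\crit k)$, as claimed.
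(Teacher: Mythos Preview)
Your argument is correct. The paper does not actually supply a proof of this lemma; it simply remarks that the statement is similar to Lemma~1.6 of Dehornoy and moves on. Your write-up is the standard verification: realize $j[k]$ as the directed union of the $j(k\vert_{V_\gamma})$, push elementarity of $k$ through $j$ one formula at a time, and read off $\crit(j[k])=j(\crit k)$ and $j[k](\delta)=\delta$ by applying $j$ to the corresponding first-order facts about $k\vert_{V_\gamma}$. There is nothing substantive to compare against in the paper itself.
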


Now, for limit $\lambda$ and $j\in\mathcal{E}_\lambda^\delta,$ define $j_0=j$ and $j_{n+1}=j[j_n].$ By induction and the previous lemma, each $j_n$ is in $\mathcal{E}_\lambda^\delta.$ The lemma below, which is due to Schlutzenberg, will now work as an analogue of Lemma~\ref{proto_berkeley_fix_any_b_trick}.

\begin{lemma}[{\cite[Lemma~1.3]{schlutzenberg}}]\label{iterate_to_fix}
    For limit $\lambda,$ any $j\in \mathcal{E}_\lambda^\delta,$ and each $\alpha\in V_\lambda$ there is $n$ such that $j_m(\alpha)=\alpha$ for all $m\geq n.$
\end{lemma}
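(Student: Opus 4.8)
The plan is to reduce everything to understanding the critical points of the iterates and the way $j$ acts on its own range. First I would record, by induction on $m$ using Lemma~\ref{application_op_and_critical_point}, that every $j_m$ lies in $\mathcal{E}_\lambda^\delta$ and that $\crit(j_m)=\kappa_m$, where $\langle\kappa_m\rangle$ is the critical sequence of $j$ (so $\kappa_0=\crit j$ and $\kappa_{m+1}=j(\kappa_m)$). Since $j(\delta)=\delta$ and $\kappa_0<\delta$, elementarity gives $\kappa_m<\delta<\lambda$ for all $m$, so the critical points are bounded below $\delta$. This is exactly what makes the statement nontrivial: unlike the usual rank\-/into\-/rank situation, we cannot fix an $\alpha$ of rank above $\delta$ simply by arranging $\crit(j_m)>\rank(\alpha)$, so stabilization must come from the algebraic structure of the $j_m$ rather than from critical points.

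The engine of the argument is the commutation identity $j_{m+1}(j(x))=j(j_m(x))$, which I would extract directly from the definition $j_{m+1}=j[j_m]=\bigcup_{\gamma<\lambda}j(j_m\restriction V_\gamma)$ by applying $j$ to the pair $(x,j_m(x))$. From it I get three facts. (a) If $j(x)=x$ then $j_m(x)=x$ for every $m$, by an easy induction on $m$. (b) If $x=j(y)\in\range(j)$ then $j_m(x)=j(j_{m-1}(y))$ for $m\ge1$, so the stabilization of $x$ reduces to that of $y=j^{-1}(x)$, an object $\in$\-/below $x$. (c) If $x$ is an ordinal not fixed by $j$, then the backward orbit $j^{-1}(x)>j^{-2}(x)>\cdots$ is strictly descending, so by well\-/foundedness there is a largest $m=d(x)<\omega$ with $x\in\range(j^m)$.

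With these in hand I would prove the explicit description
\[
 j_m(x)=\begin{cases} j(x), & x\in\range(j^m),\\ x, & x\notin\range(j^m),\end{cases}
\]
by induction on $m$; granting it, fact (c) finishes the proof, since for $\alpha\notin\operatorname{Fix}(j)$ one has $\alpha\notin\range(j^m)$ for all $m>d(\alpha)$, whence $j_m(\alpha)=\alpha$ there, while for $\alpha\in\operatorname{Fix}(j)$ we already have (a); so $n=d(\alpha)+1$ works. The ``range'' half of the description is immediate from the commutation identity and the inductive hypothesis, and the successor and gap subcases of the complementary half are routine: $j_m(\beta+1)=j_m(\beta)+1$ reduces to $\beta$, and if $\gamma_0=\sup(\range(j)\cap\alpha)<\alpha$ then $\alpha=\gamma_0+\beta$ with $\gamma_0,\beta<\alpha$ and $j_m(\gamma_0+\beta)=j_m(\gamma_0)+j_m(\beta)$ reduces to the two smaller ordinals.

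The hard part—the main obstacle—is the remaining off\-/range assertion, namely that the application $j[k]$ acts as the identity on $V_\lambda\setminus\range(j)$ for $k=j_m$. This is genuinely a statement about the operation $j[k]=\bigcup_\gamma j(k\restriction V_\gamma)$ rather than about $j$ alone: the commutation identity only constrains $j[k]$ on $\range(j)$, and the conclusion fails if one replaces the embedding $k\restriction V_\gamma$ by an arbitrary function. I would attack it by unwinding $j(k\restriction V_\gamma)$ through elementarity together with the inductively known shape of $k\restriction V_\gamma$, the delicate residual case being an $\alpha$ that is the bottom of a gap of $j$ (a limit of points of $\range(j)$ of cofinality $\ge\kappa_0$), where $j_m$ need not be continuous at $\alpha$ and one must argue straight from the definition of the application operation.
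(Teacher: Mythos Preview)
Your explicit formula is false. Take $\alpha=\kappa_1+\kappa_0$. This ordinal is not in $\range(j)$: if $j(\beta)=\kappa_1+\kappa_0$ then $\beta=\kappa_0+\gamma$ for some $\gamma$ with $j(\gamma)=\kappa_0$, impossible since $\kappa_0\notin\range(j)$. So your formula predicts $j_1(\alpha)=\alpha$. But $j_1$ is elementary with $\crit(j_1)=\kappa_1$, so $j_1(\kappa_1+\kappa_0)=j_1(\kappa_1)+j_1(\kappa_0)=\kappa_2+\kappa_0$, using $j_1(\kappa_1)=j_1(j(\kappa_0))=j(j(\kappa_0))=\kappa_2$ from your own commutation identity. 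Moreover $\alpha$ is a limit of range points (namely $\kappa_1+\xi$ for $\xi<\kappa_0$), so it lies exactly in your ``delicate residual case''; the difficulty there is not merely delicate but fatal, because the off-range clause you are trying to establish is simply not true. No amount of unwinding $j(k\restriction V_\gamma)$ will help, since $j_1=j[j]$ is just an elementary embedding with critical point $\kappa_1$, and such an embedding certainly moves ordinals above $\kappa_1$ regardless of whether they lie in $\range(j)$.

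The fix is to abandon the formula and change the pivot of your decomposition from $\gamma_0=\sup(\range(j)\cap\alpha)$ to $\kappa_\omega:=\sup_m\kappa_m$. By induction on the ordinal $\alpha$: if $\alpha<\kappa_\omega$, choose $n$ with $\alpha<\kappa_n$, and then $j_m(\alpha)=\alpha$ for all $m\ge n$ since $\crit(j_m)=\kappa_m>\alpha$. If $\alpha\ge\kappa_\omega$, write $\alpha=\kappa_\omega+\gamma$ with $\gamma<\alpha$; your fact~(a) gives $j_m(\kappa_\omega)=\kappa_\omega$ for all $m$ (as $j(\kappa_\omega)=\kappa_\omega$), whence by elementarity $j_m(\alpha)=\kappa_\omega+j_m(\gamma)$, which equals $\alpha$ for all large $m$ by the inductive hypothesis. (Note that the paper intends $\alpha$ to be an ordinal, as the paragraph preceding the lemma makes explicit; for arbitrary sets the conclusion fails outright---for $a=\{\kappa_n:n<\omega\}$ one has $j_m(a)=a\setminus\{\kappa_m\}$ for every $m$.)
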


\begin{proposition}\label{rank_berkeley_from_proto}
    For any ordinal $\zeta,$ the least $\zeta$-proto rank-Berkeley cardinal, if it exists, is also a rank-Berkeley cardinal.
\end{proposition}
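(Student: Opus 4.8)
The plan is to argue by contradiction and to show that any ordinal witnessing the failure of rank-Berkeleyness must itself be $\zeta$-proto rank-Berkeley below $\delta$, contradicting minimality. Let $\delta$ be the least $\zeta$-proto rank-Berkeley cardinal and suppose it is not rank-Berkeley. Since being $\zeta$-proto rank-Berkeley trivially implies being $\eta$-proto rank-Berkeley for every $\eta\le\zeta$ (a witnessing $j$ for $\zeta$ has $\crit j>\zeta\ge\eta$), the failure must occur at some $\zeta'$ with $\zeta<\zeta'<\delta$; fix such a $\zeta'$. Call a limit ordinal $\lambda>\delta$ \emph{bad} if $\mathcal{E}_\lambda^\delta$ contains no embedding of critical point exceeding $\zeta'$, i.e.\ no $j\colon V_\lambda\to V_\lambda$ with $\zeta'<\crit j<\delta$ and $j(\delta)=\delta$. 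The hypothesis that $\delta$ is not $\zeta'$-proto rank-Berkeley provides at least one bad level.

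I would then establish two observations. First, bad levels are cofinal: if $\lambda$ is good, witnessed by $i\in\mathcal{E}_\lambda^\delta$ with $\crit i>\zeta'$, and $\lambda'<\lambda$, then using Lemma~\ref{iterate_to_fix} to fix $\lambda'$ by some iterate $i_n$ and restricting $i_n$ to $V_{\lambda'}$ shows $\lambda'$ is good; here the critical point of $i_n$ stays above $\zeta'$ because critical points of iterates are nondecreasing (by Lemma~\ref{application_op_and_critical_point}, $\crit i_{m+1}=i(\crit i_m)\ge\crit i_m$). A short induction, using that any elementary self-embedding of $V_{\gamma+1}$ fixes $\gamma$ to handle successor levels, upgrades this to upward propagation of badness, so bad \emph{limit} levels occur cofinally. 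Second, at any bad limit level $\lambda$, every $j\in\mathcal{E}_\lambda^\delta$ has $\crit j<\zeta'$: indeed $\crit j\le\zeta'$ by definition of badness, while $\crit j=\zeta'$ is impossible since then $j[j]\in\mathcal{E}_\lambda^\delta$ would have critical point $j(\zeta')>\zeta'$ by Lemma~\ref{application_op_and_critical_point}, contradicting badness.

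The main construction then runs as follows. Fix any $\mu>\zeta'$ and, by cofinality of bad limit levels, choose a bad limit level $\nu>\mu$. By $\zeta$-proto rank-Berkeleyness of $\delta$, pick $j\in\mathcal{E}_\nu^\delta$ with $\crit j>\zeta$. Each iterate $j_m$ again lies in $\mathcal{E}_\nu^\delta$ and satisfies $\zeta<\crit j_m<\zeta'$: the lower bound because critical points of iterates are nondecreasing, the upper bound by the second observation above. Applying Lemma~\ref{iterate_to_fix} to the ordinals $\zeta'$ and $\mu$ (fixing $\gamma$ in place of $\mu$ when $\mu=\gamma+1$), for all sufficiently large $m$ we have $j_m(\zeta')=\zeta'$ and $j_m(\mu)=\mu$. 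Restricting such a $j_m$ to $V_\mu$ yields an elementary $i\colon V_\mu\to V_\mu$ with $\zeta<\crit i<\zeta'$ and $i(\zeta')=\zeta'$. Since $\mu>\zeta'$ was arbitrary, $\zeta'$ is itself $\zeta$-proto rank-Berkeley; as $\zeta'<\delta$, this contradicts the minimality of $\delta$, completing the proof.

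The conceptual heart of the argument is the reversal in the second observation: the very feature making $\zeta'$ a witness to failure—the absence of critical points in $(\zeta',\delta)$ at bad levels—forces all available self-embeddings fixing $\delta$ to have critical point below $\zeta'$, which is exactly what is needed to exhibit $\zeta'$ as $\zeta$-proto rank-Berkeley. I expect the main obstacle to be not this idea but the bookkeeping around it: securing upward propagation of badness so that bad limit levels are cofinal, and verifying that fixing both $\zeta'$ and the target level $\mu$ by iteration keeps the critical point trapped inside $(\zeta,\zeta')$ so that the restriction to $V_\mu$ is a genuine witness.
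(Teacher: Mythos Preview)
Your proof is correct and follows the same overall strategy as the paper's: assume the least $\zeta$-proto rank-Berkeley cardinal $\delta$ is not rank-Berkeley, fix a witness $\zeta'$ (the paper calls it $\alpha$) to this failure, and show $\zeta'$ is itself $\zeta$-proto rank-Berkeley, contradicting minimality. The paper's execution is leaner in two places. First, rather than proving that bad limit levels are cofinal, the paper fixes a \emph{single} bad level $\lambda$ once and for all; for arbitrary $\mu>\alpha$ it picks any limit $\nu>\max\{\mu,\lambda\}$, obtains $j\in\mathcal{E}_\nu^\delta$ with $\crit j>\zeta$, and iterates to fix $\alpha$, $\lambda$, and $\mu$ simultaneously. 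The restriction $j_n\vert_{V_\lambda}$ then lies in $\mathcal{E}_\lambda^\delta$, so badness of $\lambda$ alone forces $\crit j_n\le\alpha$---there is no need for $\nu$ itself to be bad, and hence no need for your cofinality argument. Second, the paper sidesteps your $j[j]$ trick for ruling out $\crit j=\zeta'$: since $j_n$ has already been arranged to fix $\alpha$, the bound $\crit j_n\le\alpha$ sharpens immediately to $\crit j_n<\alpha$. Your route is sound, but the paper shows that one bad level together with the ability to fix three ordinals at once is already enough; the upward-propagation bookkeeping and the strict-inequality argument, while correct, are avoidable.
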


\begin{proof}
    Let $\delta$ be the least $\zeta$-proto rank-Berkeley cardinal and suppose it is not rank-Berkeley. Fix $\alpha\in(\zeta,\delta)$ and $\lambda>\delta$ such that $\forall j\in \mathcal{E}_{\lambda}^{\delta} (\crit{j}\leq\alpha).$ We will show that $\alpha$ must be a $\zeta$-proto rank-Berkeley, contradicting the choice of $\delta.$

    Let $\mu>\alpha$ be arbitrary and let $\nu$ be an ordinal above $\max\{\mu, \lambda\}.$ By $\zeta$-proto rank-Berkeleyness of $\delta,$ fix a $j\in \mathcal{E}_\nu^{\delta}$ such that $\crit j>\zeta.$ By Lemma~\ref{iterate_to_fix}, there is $n$ such that $j_n$ fixes $\alpha,\lambda,$ and $\mu,$ and moreover, $\crit j_n >\zeta.$ Now, $j_n\vert_{V_{\lambda}}\in \mathcal{E}_{\lambda}^{\delta},$ hence $\crit{j_n} \leq \alpha.$ But also $j_n(\alpha)=\alpha,$ so in fact $\crit{j_n}<\alpha.$ Finally, since $j_n$ fixes $\mu,$ we can restrict $j_n$ to $V_\mu$ so that $j_n\vert_{V_\mu}$ witnesses $\zeta$-proto rank-Berkeleyness of $\alpha$ at the arbitrary ordinal $\mu.$
\end{proof}

%%%%%%%%%%%%%%%%%%%%%%%%%%%%%%%%%%%%%%%%%%%%%%%%%%
%%%%%%%%%%%%%%%%%%%%%%%%%%%%%%%%%%%%%%%%%%%%%%%%%%
%%%%%%%%%%%%%%%%%%%%%%%%%%%%%%%%%%%%%%%%%%%%%%%%%%

\section{VP and Berkeley Cardinals}\label{section_vp_and_berkeley_cardinals}

Let $\VP^\alpha(X)$ be $\VP(X)$ restricted to structures of type $\tau\in V_\alpha.$ The following result is similar to \cite[Corollary~2.3]{goldberg_measurable_choiceless}, but for Berkeley cardinals.

\begin{proposition}\label{berkeley_to_vp}
    If $\delta$ is a Berkeley cardinal, then $\boldVP^\delta$ holds.
\end{proposition}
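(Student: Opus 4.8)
The plan is to obtain the Vop\v enka conclusion for a given class $\mathcal{C}$ from a single elementary self-embedding of a large $V_\lambda$ that carries one member of $\mathcal{C}$ onto a \emph{distinct} member. Fix a formula $\varphi(x,p)$ defining $\mathcal{C}$, and recall that the common type satisfies $\tau\in V_\delta$.

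\emph{Reduction.} First I would fix a limit ordinal $\lambda$ with $p,\tau\in V_\lambda$ and $V_\lambda\prec_{\Sigma_n}V$, where $n$ is the complexity of $\varphi$; such $\lambda$ exist (indeed form a club) by L\'evy reflection, which is available in $\ZF$. This guarantees that $V_\lambda$ computes $\mathcal{C}$ correctly, i.e.\ $x\in\mathcal{C}\iff V_\lambda\satisfies\varphi(x,p)$ for $x\in V_\lambda$. Applying Lemma~\ref{proto_berkeley_fix_any_b_trick} to the transitive set $M=V_\lambda$ and the parameter $b=\langle p,\tau\rangle$ yields a nontrivial elementary $j\colon V_\lambda\to V_\lambda$ with $\crit j<\delta$, $j(p)=p$, and $j(\tau)=\tau$ (the last even comes for free once $\crit j>\rank\tau$, which is possible since $\tau\in V_\delta$). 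Because $j(p)=p$ and $V_\lambda$ is correct, $j$ maps $\mathcal{C}\cap V_\lambda$ into itself; and because $j(\tau)=\tau$, for every $A\in\mathcal{C}\cap V_\lambda$ the restriction $j\vert_A$ is an elementary embedding of the $\tau$-structure $A$ into the $\tau$-structure $j(A)\in\mathcal{C}$. Hence it suffices to produce one $A\in\mathcal{C}\cap V_\lambda$ with $j(A)\neq A$: then $A$ and $j(A)$ are two distinct comparable members, as required for $\boldVP^\delta$.

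\emph{Moving a member.} The natural lever is the action of $j$ on ranks. If $\rho=\rank(A)$ then $\rank(j(A))=j(\rho)$, so $A\neq j(A)$ as soon as $j$ moves $\rho$. Writing $\kappa_0=\crit j$ and $\kappa_{m+1}=j(\kappa_m)$, every ordinal $\rho$ with $\kappa_0\le\rho<\sup_m\kappa_m$ is moved, since $j$ is strictly increasing and $\rho\in[\kappa_m,\kappa_{m+1})$ forces $j(\rho)\ge\kappa_{m+1}>\rho$. As $\mathcal{C}$ is a proper class its members have unboundedly large rank, so I would take $B\in\mathcal{C}$ of least rank $\rho_0$ and aim to arrange that the critical sequence of $j$ climbs past $\rho_0$, which moves $B$ and finishes the argument.

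\emph{The main obstacle.} Securing an embedding that genuinely moves $\rho_0$ is the technical heart, and the step I expect to be hardest. Under $\AC$ it is automatic, since the critical sequence of any nontrivial $j\colon V_\lambda\to V_\lambda$ is then cofinal in $\lambda$ (part of Kunen's analysis), moving every member of rank above $\crit j$; but that argument uses choice and is unavailable here. In the choiceless setting a Berkeley embedding may have a bounded critical sequence, and if every member of $\mathcal{C}$ has rank $\ge\delta$ while the available $j$ fixes $\delta$, then its critical sequence — and likewise the critical points produced by the application operation of Lemma~\ref{application_op_and_critical_point} — stay below $\delta$, so no such $j$ moves the rank of any member. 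Overcoming this requires extracting from the Berkeleyness of $\delta$ an embedding $j$ with $\crit j<\delta$ whose critical sequence nonetheless passes $\rho_0$ (so that $\rho_0\in[\crit j,\,\sup_m\kappa_m)$), which, when $\rho_0\ge\delta$, amounts to finding one with $j(\delta)>\delta$. I expect to resolve this by drawing on the reflective strength of $\delta$ — that it is a limit of critical points and $\cof(\delta)=\omega$, giving witnesses over arbitrary transitive domains with critical points cofinal in $\delta$ — possibly combined with the fixing technique of Lemma~\ref{iterate_to_fix} to control parameters. With such a $j$ in hand the least-rank member $B$ is moved, $j(B)\in\mathcal{C}$ differs from $B$, and $j\vert_B$ witnesses $\boldVP^\delta$ for $\mathcal{C}$.
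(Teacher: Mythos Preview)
You set up the argument correctly and you have correctly located the real difficulty: once every member of $\mathcal{C}$ has rank $\ge\delta$, nothing in the Berkeley definition forces the critical sequence of $j$ to climb past $\delta$, and your proposed resolution---extracting a $j$ with $j(\delta)>\delta$ from ``reflective strength,'' cofinality $\omega$, or Lemma~\ref{iterate_to_fix}---does not actually work. Lemma~\ref{iterate_to_fix} produces iterates that \emph{fix} prescribed ordinals, which is the opposite of what you need, and neither the cofinality of $\delta$ nor the density of critical points below $\delta$ yields an embedding of $V_\lambda$ whose critical sequence escapes $\delta$. So the proof as written has a genuine gap at exactly the point you flagged.

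The paper bypasses this obstacle rather than overcoming it: instead of pushing the critical sequence up to the members of $\mathcal{C}$, it pulls a member of $\mathcal{C}$ down to the critical point. Define $F(A)=\ot\bigl((\rank(A)+1)\cap\{\rank(B):B\in\mathcal{C}\}\bigr)$, so $F(A)$ counts the distinct ranks of $\mathcal{C}$-members at or below $\rank(A)$. Since $\mathcal{C}$ is proper, $F$ is onto $\OR$; choose $\lambda$ large enough that every $\alpha<\delta$ lies in $\range(F\vert_{\mathcal{C}_\lambda})$, where $\mathcal{C}_\lambda=\mathcal{C}\cap V_\lambda$. Now apply Lemma~\ref{proto_berkeley_fix_any_b_trick} with $M=V_{\lambda+1}$ and $b=\mathcal{C}_\lambda$ (fixing the \emph{set} $\mathcal{C}_\lambda$, not the defining parameter, so no correctness of $V_\lambda$ is needed); take $\crit j>\rank(\tau)$. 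Then $F\vert_{\mathcal{C}_\lambda}$ is definable from $\mathcal{C}_\lambda$ inside $V_{\lambda+1}$, so $j$ commutes with it. Pick $A\in\mathcal{C}_\lambda$ with $F(A)=\kappa=\crit j$; then $F(j(A))=j(\kappa)\neq\kappa$, hence $j(A)\neq A$, and $j\vert_A:A\to j(A)$ witnesses $\boldVP^\delta$ for $\mathcal{C}$. The whole point is that $\kappa<\delta$ is automatic, so a member at $F$-level $\kappa$ always exists---no control over the critical sequence is required.
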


\begin{proof}
    Let $\mathcal{C}$ be a definable, with parameters, proper class of structures of the same type $\tau\in V_\delta.$ Let $\rank(x)$ denote the rank function and $\ot(x)$ the order-type function on sets of ordinals. Consider the class function $F: \mathcal{C}\rightarrow \OR$ defined by 
    $$
    F(A) = \ot(\rank(A)+1\cap\range(\operatorname{rank}\vert_{\mathcal{C}})).
    $$
    Since $\mathcal{C}$ is a proper class, $\range(F)=\OR.$

    Denote $\mathcal{C}\cap V_\alpha$ by $\mathcal{C}_\alpha.$ Let $\lambda>\delta$ be large enough so that $\delta\subset \range(F\vert_{\mathcal{C}_\lambda}).$ By Berkeleyness, fix an elementary embedding $j:V_{\lambda+1}\rightarrow V_{\lambda+1}$ such that $j(\mathcal{C}_\lambda)=\mathcal{C}_\lambda$ and $\tau\in V_\kappa,$ where $\kappa$ is the critical point. We know that there is $A\in V_\lambda$ such that $F(A)=\kappa.$ Now, since $F(j(A))=j(F(A))=j(\kappa)\neq \kappa,$ we must have $A\neq j(A).$ Also, as $j$ fixes $\mathcal{C}_\lambda,\ j(A)$ is in $\mathcal{C}_\lambda$ as well. So $j$ restricted to the structure $A$ of $\mathcal{C}$ elementarily embeds it into the different structure $j(A)$ of $\mathcal{C},$ and we are done.
\end{proof}

In the other direction, by building on the proof of Theorem~\ref{equivalence_of_vp_and_n_choiceless_extendibles}, we get:

\begin{theorem}[$\ZF + \neg\boldVP$]\label{vp_to_berkeley_levels}
    If $\boldVP^\omega$ holds, then there is an ordinal $\delta>\omega$ for which $\boldVP^\delta$ holds while $\boldVP^{\delta+1}$ fails, and moreover, $\delta$ is a Berkeley cardinal.
\end{theorem}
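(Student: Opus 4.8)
The plan is to locate the level at which $\boldVP$ first breaks and show that level is Berkeley, building on the structure of the proof of Theorem~\ref{equivalence_of_vp_and_n_choiceless_extendibles}. First I would note that $\beta\mapsto\boldVP^\beta$ is decreasing: if $\alpha\le\beta$ then $V_\alpha\subseteq V_\beta$, so $\boldVP^\beta$ implies $\boldVP^\alpha$. Hence $S=\{\mu\mid\boldVP^\mu\text{ holds}\}$ is downward closed. It contains $\omega$ by hypothesis, and since $\neg\boldVP$ holds there is a single proper class of structures of one type $\tau$ with no elementary embedding between distinct members, witnessing $\neg\boldVP^\eta$ for every $\eta$ with $\tau\in V_\eta$; thus $S$ is bounded. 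A limit $\sup S$ would lie in $S$ (a witness with type in $V_{\sup S}=\bigcup_{\mu<\sup S}V_\mu$ already falsifies some $\boldVP^\mu$), so either $\sup S\in S$, giving $\boldVP^{\sup S}$ true and $\boldVP^{\sup S+1}$ false, or $\sup S=\gamma+1$ is a successor and $\delta=\gamma$ works. In every case I obtain $\delta$ with $\boldVP^\delta$ holding and $\boldVP^{\delta+1}$ failing; that $\delta>\omega$ will come for free once $\delta$ is shown Berkeley.

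Next I fix a witness $\mathcal{C}$ to $\neg\boldVP^{\delta+1}$: a parametrically definable proper class of $\tau$-structures, $\tau\in V_{\delta+1}$, admitting no elementary embedding between two distinct members. Since $\boldVP^\delta$ holds, $\tau\notin V_\delta$, so $\rank(\tau)=\delta$ exactly. As in Proposition~\ref{berkeley_to_vp} let $F\colon\mathcal{C}\to\OR$ be the normalized rank function (onto $\OR$) and let $D$ be the club of its closure points, i.e.\ those $\beta$ with $F[\mathcal{C}\cap V_\beta]=\beta$. Now, imitating the club-and-constants construction of Theorem~\ref{equivalence_of_vp_and_n_choiceless_extendibles}(i$\Rightarrow$ii), for a given $\zeta<\delta$ and a given transitive $M\ni\delta$ I form the proper class of structures
$$\mathcal{E}_\beta=(V_\beta,\in,\mathcal{C}\cap V_\beta,M,\delta,\langle\xi\rangle_{\xi\le\zeta}),\qquad \beta\in D,\ \beta>\rank(M),$$
whose signature comprises $\in$, one unary predicate for $\mathcal{C}\cap V_\beta$, two constants naming $M$ and $\delta$, and one constant for each $\xi\le\zeta$. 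Crucially this signature has low rank and lies in $V_\delta$ (for every $\zeta<\delta$ once $\delta$ is a limit), so $\boldVP^\delta$ applies and yields an elementary $j\colon\mathcal{E}_{\beta_1}\to\mathcal{E}_{\beta_2}$ with $\beta_1\neq\beta_2$ and $\beta_1\in D$.

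The heart of the argument is pinning the critical point into $(\zeta,\delta)$. The constants force $j(\xi)=\xi$ for all $\xi\le\zeta$, $j(M)=M$, and $j(\delta)=\delta$; since $M$ is transitive and named, $j\restriction M\colon M\to M$ is an elementary self-embedding, and $\crit(j)>\zeta$ while $\crit(j)\neq\delta$. It remains to force $\crit(j)<\delta$, and this is exactly where $\neg\boldVP^{\delta+1}$ enters through the embedded copy of $\mathcal{C}$. Suppose $\kappa:=\crit(j)>\delta$. Then $\tau\in V_{\delta+1}\subseteq V_\kappa$, so $j\restriction V_\kappa$ is the identity and fixes $\tau$; as $\beta_1\in D$, pick $A\in\mathcal{C}\cap V_{\beta_1}$ with $F(A)=\kappa$. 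Elementarity sends $A$ into $\mathcal{C}$ with $F(j(A))=j(\kappa)>\kappa=F(A)$, so $A\neq j(A)$, and because $j$ fixes $\tau$ and preserves $\in$, the map $j\restriction A\colon A\to j(A)$ is a genuine $\tau$-elementary embedding between two distinct members of $\mathcal{C}$ — contradicting the choice of $\mathcal{C}$. Hence $\kappa\le\delta$, and with $\kappa\neq\delta$ we get $\zeta<\crit(j\restriction M)<\delta$.

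Finally, as $\zeta$ and $M$ were arbitrary (subject only to the signature lying in $V_\delta$), $\delta$ is $\zeta$-proto Berkeley for cofinally many $\zeta<\delta$; since the resulting critical points are uncountable ordinals below $\delta$, this forces $\delta$ to be a limit, whereupon the signature condition holds for every $\zeta<\delta$, giving $\zeta$-proto Berkeleyness for all $\zeta<\delta$, i.e.\ $\delta$ is Berkeley by Definition~\ref{def:berkeley}, and in particular $\delta>\omega$. I expect the main obstacle to be precisely the upper bound $\crit(j)<\delta$: the delicate point is wiring the failure witness $\mathcal{C}$ (type of rank exactly $\delta$) into $\mathcal{E}_\beta$ so that a critical point above $\delta$ necessarily resurrects a forbidden $\tau$-embedding inside $\mathcal{C}$, while a secondary technical nuisance is the rank bookkeeping that keeps every auxiliary signature inside $V_\delta$ — a condition that both forces and is forced by the limit-ness of $\delta$, and which must be handled carefully for $\zeta$ near $\delta$.
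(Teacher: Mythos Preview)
Your route is genuinely different from the paper's. The paper never tries to push the critical point below $\delta$ directly; instead, using Theorem~\ref{equivalence_of_vp_and_n_choiceless_extendibles} it fixes $n$ and an ordinal $\alpha>\delta$ above which no $\alpha$-$n$-choiceless extendible exists, and reuses the club $D(\alpha,n)$ from that proof: the auxiliary structures are $(V_\beta,D(\alpha,n)\cap\beta,\alpha,M,\xi)_{\xi\le\zeta}$, and the critical point is forced below $\alpha$ because a critical point above $\alpha$ would manufacture a forbidden extendibility witness. Having shown that $\alpha$ is $\zeta$-proto Berkeley for every $\zeta<\delta$, the paper then invokes Propositions~\ref{berkeley_from_proto} and~\ref{berkeley_to_vp} to squeeze the resulting Berkeley cardinals $\delta_\zeta$ into $(\zeta,\delta]$, so that $\delta$ is their limit (or one of them). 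Your mechanism---wiring the failure witness $\mathcal C$ into the structure as a predicate so that $\crit j>\delta$ would resurrect a forbidden $\tau$-embedding inside $\mathcal C$---is more direct and sidesteps the choiceless-extendible detour entirely; it is a nice idea.

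There is, however, a real gap in how you obtain $\delta$. The assertion ``$\boldVP^\mu$ holds'' is a schema, one instance per defining formula, so $S=\{\mu\mid\boldVP^\mu\}$ is not a definable class in $\ZF$ and its supremum is not available to you as an object of the theory. The paper handles this by first fixing an $m$ large enough that its auxiliary class is $\Sigma_m$ and $\VP(\Sigma_m(V))$ already fails, setting $\delta=\sup\{\mu\mid\VP^\mu(\Sigma_m(V))\text{ holds}\}$, which \emph{is} first-order; full $\boldVP^\delta$ is recovered only at the end, from Berkeleyness via Proposition~\ref{berkeley_to_vp}. You cannot simply port this fix over to your argument: if you define $\delta$ via level $m$, then the failure witness $\mathcal C$ you must use is $\Sigma_m$, but the auxiliary class $\{\mathcal E_\beta\}$ you build from it---carrying $\mathcal C\cap V_\beta$ as a predicate and indexed by a club $D$ itself defined from $\mathcal C$---has complexity strictly above $\Sigma_m$, while you only have $\VP^\delta(\Sigma_m(V))$ to apply to it. The paper escapes exactly this self-reference because the complexity of $D(\alpha,n)$ depends only on $n$, fixed in advance of and independently from the later choice of $m$. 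So while your upper-bound trick is elegant, the complexity bookkeeping needed to make the argument first-order in $\ZF$ is precisely what pushes one back toward the paper's indirect route through extendibility.
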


\begin{proof}
    Since $\boldVP$ fails, by Theorem~\ref{equivalence_of_vp_and_n_choiceless_extendibles}, for some $n\geq1,$ there are only boundedly many $n$-choiceless extendible cardinals, if any. By the first paragraph of the proof of \ref{equivalence_of_vp_and_n_choiceless_extendibles}, for some $\alpha,$ there is no $\alpha$-$n$-choiceless extendible ordinal above $\alpha.$ We notice that, by the definition of $\gamma$-$n$-choiceless extendibility, all $\alpha>\gamma$ also satisfy that there is no $\alpha$-$n$-choiceless extendible ordinal above $\alpha.$

    Define the class $\mathcal{C}(\alpha,M,\zeta)$ (with arbitrary parameters $\alpha,M,\zeta)$ to be the class of structures of the form $(V_\beta,D(\alpha,n)\cap\beta,\alpha,M,\xi)_{\xi\leq\zeta}$ where $D(\alpha,n)$ is the same as in Theorem~\ref{equivalence_of_vp_and_n_choiceless_extendibles}, $\beta\in D(\alpha,n),\ D(\alpha,n)\cap\beta\in V_\beta,$ and $M\in V_\beta.$ Let $m$ be a natural number large enough so that $\mathcal{C}(\alpha,M,\zeta)$ is $\Sigma_m(V)$ and, moreover, $\VP(\Sigma_m(V))$ fails. Note that $m$ does not change regardless of the choice of $\zeta.$ 
    
    Since $\VP^\omega(\Sigma_m(V))$ holds while $\VP(\Sigma_m(V))$ fails, define $\delta\geq\omega$ to be $\sup\{\mu\mid \VP^\mu(\Sigma_m(V)) \textrm{ holds}\},$ and let $\alpha$ be some cardinal greater than both $\gamma$ and $\delta.$ We will show that $\alpha$ is a $\zeta$-proto Berkeley cardinal for all $\zeta<\delta.$ By Proposition~\ref{berkeley_from_proto}, this implies that, for each $\zeta<\delta,$ there is a Berkeley cardinal $\delta_\zeta$ such that $\zeta<\delta_\zeta\leq\alpha.$ By the definition of $\delta$ and Proposition~\ref{berkeley_to_vp}, none of the $\delta_\zeta$s can be greater than $\delta.$ In particular, this means that $\delta$ will either be $\delta_\zeta$ for some $\zeta$ or the limit of the $\delta_\zeta$s. In both cases, $\delta$ will be a Berkeley cardinal. Then, again using Proposition~\ref{berkeley_to_vp}, we must have $\boldVP^\delta,$ and, by the definition of $\delta,\ \boldVP^{\delta+1}$ must fail. Finally, $\delta$ being Berkeley means $\delta>\omega,$ and we will have found our $\delta.$

    So all we need to do is to show that $\alpha$ is a $\zeta$-proto Berkeley for any $\zeta<\delta.$ Fix $\zeta<\delta.$ Let $M$ be any transitive set that contains $\alpha.$ An application of $\VP^\delta(\Sigma_m(V))$ to the class $\mathcal{C}(\alpha, M, \zeta)$ will give us an elementary embedding
    $$
    j:(V_{\beta_1},D(\alpha,n)\cap\beta_1,\alpha,M,\xi)_{\xi\leq\zeta}\rightarrow (V_{\beta_2},D(\alpha,n)\cap\beta_2,\alpha,M,\xi)_{\xi\leq\zeta}
    $$
    for some $\beta_1\neq \beta_2.$
    $j$ is not the identity, because $\beta_1\neq\beta_2$ implies $j(\sigma_1)=\sigma_2\neq\sigma_1,$ where $\sigma_i=\sup(D(\alpha,n)\cap\beta_i),\ i\in\{1,2\}.$ The critical point of $j$ cannot be $\alpha$ as $\alpha$ is fixed by $j,$ and it cannot be higher than $\alpha$ either by the definition of $\mathcal{C}.$ Moreover, the critical point has to be strictly greater than $\zeta$ due to the constants $\xi\leq\zeta.$ So, we are left with $\zeta<\crit j<\alpha.$ Thus, the restriction of $j$ to $M$ is our desired elementary embedding.
\end{proof}

\begin{corollary}[$\ZF + \neg\boldVP$]\label{cor:vp_to_berkeley_levels_sup}
    If $\boldVP^\omega$ holds, then
    $$
    \sup\{\mu\mid \boldVP^\mu\textrm{ holds}\}=\sup\{\delta\mid \delta \text{ is Berkeley}\}.
    $$
\end{corollary}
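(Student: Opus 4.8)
The plan is to derive this as an essentially immediate consequence of Theorem~\ref{vp_to_berkeley_levels} and Proposition~\ref{berkeley_to_vp}, by establishing two inequalities. Write $S_{\VP}=\sup\{\mu\mid \boldVP^\mu\textrm{ holds}\}$ and $S_{\BC}=\sup\{\delta\mid \delta\textrm{ is Berkeley}\}$. Since we work in $\ZF+\neg\boldVP+\boldVP^\omega$, Theorem~\ref{vp_to_berkeley_levels} furnishes a specific ordinal $\delta_0>\omega$ that is Berkeley and for which $\boldVP^{\delta_0}$ holds while $\boldVP^{\delta_0+1}$ fails.

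For the inequality $S_{\VP}\leq S_{\BC}$, the key observation is that $\delta_0$ witnesses the failure of $\boldVP^\mu$ for every $\mu>\delta_0$: if $\boldVP^\mu$ held for some $\mu>\delta_0$, then since $\boldVP^\mu$ trivially implies $\boldVP^{\delta_0+1}$ (a restriction to structures of smaller type), we would contradict the failure of $\boldVP^{\delta_0+1}$. Hence $\boldVP^\mu$ fails for all $\mu>\delta_0$, so $S_{\VP}\leq\delta_0+1$. On the other hand, $\delta_0$ itself is Berkeley, so $S_{\BC}\geq\delta_0$. Combining these (and noting $S_{\VP}$ is at most $\delta_0+1$ while being a supremum of values at most $\delta_0$, so in fact $S_{\VP}\le\delta_0\leq S_{\BC}$) gives the first inequality; I would phrase this cleanly by noting that $\boldVP^{\delta_0}$ holds but $\boldVP^\mu$ fails for all $\mu>\delta_0$, which pins $S_{\VP}=\delta_0$.

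For the reverse inequality $S_{\BC}\leq S_{\VP}$, I would invoke Proposition~\ref{berkeley_to_vp}: every Berkeley cardinal $\delta$ satisfies $\boldVP^\delta$. Thus for any Berkeley $\delta$ we have $\delta\in\{\mu\mid\boldVP^\mu\textrm{ holds}\}$, so $\delta\leq S_{\VP}$, and taking the supremum over all Berkeley cardinals yields $S_{\BC}\leq S_{\VP}$. Since $\delta_0$ is Berkeley, the set of Berkeley cardinals is nonempty, so both suprema are well-defined ordinals. Putting the two inequalities together gives $S_{\VP}=S_{\BC}$, as desired.

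The argument is short and I do not anticipate a genuine obstacle, since all the heavy lifting is done in the cited results. The one point requiring mild care is the bookkeeping at the top, namely confirming that $S_{\VP}$ equals $\delta_0$ rather than $\delta_0+1$: because $S_{\VP}$ is the supremum of the ordinals $\mu$ for which $\boldVP^\mu$ holds, and the largest such $\mu$ is exactly $\delta_0$ (with failure at $\delta_0+1$ and above), the supremum of this set of ordinals is $\delta_0$ itself. I would state this explicitly to avoid any off-by-one confusion, and then the equality of the two suprema follows directly.
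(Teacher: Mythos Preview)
Your proposal is correct and follows essentially the same route as the paper, which simply cites ``the proof of Theorem~\ref{vp_to_berkeley_levels} and Proposition~\ref{berkeley_to_vp}.'' You spell out the two inequalities more explicitly than the paper does, but the content is the same: the $\delta_0$ furnished by Theorem~\ref{vp_to_berkeley_levels} pins down $S_{\VP}=\delta_0$ and gives $S_{\BC}\geq\delta_0$, while Proposition~\ref{berkeley_to_vp} gives $S_{\BC}\leq S_{\VP}$.
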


\begin{proof}
    By the proof of Theorem~\ref{vp_to_berkeley_levels} and Proposition~\ref{berkeley_to_vp}.
\end{proof}

\begin{corollary}[$\ZF + \neg\boldVP$]\label{berkeley_equivalent_to_finite_vp}
    The existence of a Berkeley cardinal is equivalent to $\boldVP^\omega.$
\end{corollary}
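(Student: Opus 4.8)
The plan is to prove the biconditional by combining results already established in the paper, working under the standing assumption $\ZF + \neg\boldVP$. The forward direction, that the existence of a Berkeley cardinal implies $\boldVP^\omega$, is the easier half. First I would take $\delta$ to be a Berkeley cardinal. By Proposition~\ref{berkeley_to_vp}, $\boldVP^\delta$ holds, that is, Vop\v enka's Principle holds for all definable (with parameters) proper classes of structures of type $\tau \in V_\delta$. Since $\delta$ is a Berkeley cardinal it is in particular a (large) limit cardinal well above $\omega$, so every \emph{finite} type $\tau$ is coded by some element of $V_\omega \subset V_\delta$. Hence $\boldVP^\omega$, being $\boldVP$ restricted to structures of finite type, is a sub-schema of $\boldVP^\delta$, and therefore follows immediately.

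For the reverse direction, that $\boldVP^\omega$ implies the existence of a Berkeley cardinal, I would simply invoke Theorem~\ref{vp_to_berkeley_levels}. That theorem, proved under the same background theory $\ZF + \neg\boldVP$, asserts precisely that if $\boldVP^\omega$ holds then there is an ordinal $\delta > \omega$ which is a Berkeley cardinal (along with the additional level information about $\boldVP^\delta$ holding and $\boldVP^{\delta+1}$ failing, which we do not need here). Thus assuming $\boldVP^\omega$ directly yields a Berkeley cardinal, closing the loop.

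The only subtle point, and the place I would be most careful, is the comparison of finite types with the notion $\boldVP^\omega$ as defined in the paper. I would make explicit that $\boldVP^\omega$ denotes $\boldVP$ restricted to proper classes of structures of the same finite type, as stated in the introduction, whereas $\boldVP^\delta$ (via the notation $\VP^\alpha(X)$ introduced before Proposition~\ref{berkeley_to_vp}) is $\boldVP$ restricted to types $\tau \in V_\delta$. So the containment $\boldVP^\delta \Rightarrow \boldVP^\omega$ requires knowing that every finite type is represented by a structure-type lying in $V_\delta$; since a finite type is a finite object coded in $V_\omega$ and $\omega < \delta$, this is immediate. I expect no genuine obstacle here — the entire content has been front-loaded into Proposition~\ref{berkeley_to_vp} and Theorem~\ref{vp_to_berkeley_levels} — so the proof is essentially a two-line citation, and the main task is to phrase the two directions cleanly so that the type-restriction bookkeeping is transparent.
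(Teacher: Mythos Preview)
Your proposal is correct and follows exactly the intended approach: the paper states this corollary without proof precisely because both directions are immediate from Proposition~\ref{berkeley_to_vp} (forward, since any Berkeley $\delta$ satisfies $\delta>\omega$) and Theorem~\ref{vp_to_berkeley_levels} (reverse). Your attention to the type-restriction bookkeeping is appropriate but, as you note, routine.
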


\begin{corollary}\label{berkeley_weak_vp_to_strong_vp}
    If $\AC$ holds, then $\boldVP^\omega$ implies $\boldVP.$
\end{corollary}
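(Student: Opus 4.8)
The plan is to argue by contradiction, leveraging the fact (established in the preceding results) that under $\neg\boldVP$ the principle $\boldVP^\omega$ yields a Berkeley cardinal, together with the classical fact that $\AC$ rules out Berkeley cardinals via Kunen's inconsistency.

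Assume $\AC$ and $\boldVP^\omega$, and suppose toward a contradiction that $\boldVP$ fails. Then we are working in the theory $\ZF + \neg\boldVP$, so Theorem~\ref{vp_to_berkeley_levels} applies (equivalently, Corollary~\ref{berkeley_equivalent_to_finite_vp}): since $\boldVP^\omega$ holds, there exists a Berkeley cardinal $\delta$.

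Next I derive a contradiction from the existence of $\delta$ in the presence of $\AC$. Since $\delta$ is Berkeley, it is in particular $\zeta$-proto Berkeley for some fixed $\zeta<\delta$, so applying Definition~\ref{def:berkeley} to the transitive set $M = V_{\delta+2}$ (which indeed contains $\delta$) yields a nontrivial elementary embedding $j:V_{\delta+2}\rightarrow V_{\delta+2}$ with $\zeta<\crit{j}<\delta$. This directly contradicts the alternative form of Kunen's inconsistency, Theorem~\ref{kunen_inconsistency_alternare_form}, which asserts that under $\AC$ there is no nontrivial elementary embedding $j:V_{\eta+2}\rightarrow V_{\eta+2}$ for any $\eta$. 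Hence the assumption $\neg\boldVP$ is untenable, and $\boldVP$ holds.

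The substantive content has already been carried out in Theorem~\ref{vp_to_berkeley_levels}; the only point requiring care here is to invoke Kunen's inconsistency in the correct form. A Berkeley cardinal supplies embeddings $j:M\rightarrow M$ for \emph{all} transitive $M$ with $\delta\in M$, so one must specialize $M$ to a set of the form $V_{\eta+2}$ in order to apply Theorem~\ref{kunen_inconsistency_alternare_form}; the choice $M = V_{\delta+2}$ accomplishes this, and nontriviality is immediate since $\crit{j}<\delta$ forces $j\neq\id$. There is no real obstacle beyond this bookkeeping.
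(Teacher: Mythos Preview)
Your proof is correct and is exactly the argument the paper intends: the corollary is stated without proof, but it follows immediately from Corollary~\ref{berkeley_equivalent_to_finite_vp} (or Theorem~\ref{vp_to_berkeley_levels}) together with the incompatibility of Berkeley cardinals and $\AC$ via Theorem~\ref{kunen_inconsistency_alternare_form}, which is precisely what you spell out.
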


\begin{remark}
    The proof of Theorem~\ref{vp_to_berkeley_levels} actually shows that $\VP^\omega(\mathbf{\Pi}_{n+1})$ implies the existence of a Berkeley cardinal in $\ZF + \neg \VP(\mathbf{\Pi}_{n+1}),$ for $n\geq1.$ With that in mind, Corollary~\ref{berkeley_equivalent_to_finite_vp} becomes an equivalence between the existence of a Berkeley cardinal, $\boldVP^\omega,$ and $\VP^\omega(\mathbf{\Pi}_{n+1})$ in $\ZF + \neg \VP(\mathbf{\Pi}_{n+1}),$ for $n\geq1.$
\end{remark}

Similar relations hold between rank-Berkeley cardinals and $\VP(\OR).$ For example, in the proof of Proposition~\ref{berkeley_to_vp}, if $\lambda$ is chosen to be correct enough and $j$ is so that it fixes the ordinal defining the class of structures, then we would have:

\begin{proposition}[{\cite[Cor.~2.3]{goldberg_measurable_choiceless}}]\label{rank_berkeley_to_vp}
    If $\delta$ is a rank-Berkeley cardinal, then $\VP^\delta(\OR)$ holds.
\end{proposition}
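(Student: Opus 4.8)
The plan is to run the argument of Proposition~\ref{berkeley_to_vp} with the rank-Berkeley embeddings $j:V_\lambda\to V_\lambda$ in place of the Berkeley embeddings on an arbitrary transitive $M$. The one genuine change is that we can no longer invoke Lemma~\ref{proto_berkeley_fix_any_b_trick} to fix $\mathcal{C}\cap V_\lambda$ outright as an element; instead we must force $j$ to respect $\mathcal{C}\cap V_\lambda$ by choosing $\lambda$ correct enough that $\mathcal{C}$ is captured over $V_\lambda$ and by arranging that $j$ fixes the defining ordinal parameter. This is exactly why the conclusion is about $\VP^\delta(\OR)$ (ordinal parameters) rather than $\boldVP^\delta$.

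In detail: given a proper class $\mathcal{C}$ of structures of a common type $\tau\in V_\delta$, defined by $\phi(x,\xi)$ with a single ordinal parameter $\xi$ (coding all the parameters), set $F(A)=\ot((\rank(A)+1)\cap\range(\rank\vert_{\mathcal{C}}))$ as before, so $\range(F)=\OR$. Write $\mathcal{C}_\lambda=\mathcal{C}\cap V_\lambda$. I would pick $\lambda>\max\{\delta,\xi\}$ lying in $C^{(m)}$ for $m$ large enough that $\phi$, and hence the restriction of $F$ to $V_\lambda$, is absolute between $V_\lambda$ and $V$, and large enough that $\delta\subseteq\range(F\vert_{\mathcal{C}_\lambda})$. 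Rank-Berkeleyness then yields some $j\in\mathcal{E}_\lambda^\delta$, so that $\crit j<\delta$ and $j(\delta)=\delta$.

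Next I would apply Lemma~\ref{iterate_to_fix} to replace $j$ by an iterate $j_n$ fixing both $\tau$ and $\xi$; by Lemma~\ref{application_op_and_critical_point} every iterate remains in $\mathcal{E}_\lambda^\delta$, so $\kappa:=\crit{j_n}<\delta$ and $j_n(\delta)=\delta$. Since $j_n$ is elementary on $V_\lambda$ and $j_n(\xi)=\xi$, absoluteness gives $A\in\mathcal{C}\iff j_n(A)\in\mathcal{C}$ for $A\in V_\lambda$, so $j_n$ maps $\mathcal{C}_\lambda$ to itself and commutes with $F$ there. Choosing $A\in\mathcal{C}_\lambda$ with $F(A)=\kappa$ (available because $\kappa<\delta\subseteq\range(F\vert_{\mathcal{C}_\lambda})$), we get $F(j_n(A))=j_n(\kappa)>\kappa$, whence $j_n(A)\neq A$; and since $j_n(A)\in\mathcal{C}_\lambda$ and $j_n(\tau)=\tau$, the restriction $j_n\vert_A\colon A\to j_n(A)$ is an elementary embedding between two distinct members of $\mathcal{C}$, as required.

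The main obstacle is the step of making $j$ fix $\xi$. Because membership in $\mathcal{E}_\lambda^\delta$ traps $\crit j$ strictly below $\delta$, we cannot simply take the critical point above $\xi$ when $\xi\geq\delta$, as the Berkeley argument effectively does by fixing the single set $\mathcal{C}_\lambda$. The resolution is precisely the iteration trick of Lemma~\ref{iterate_to_fix}: iterating $j$ fixes any prescribed ordinal in $V_\lambda$ after finitely many steps while staying inside $\mathcal{E}_\lambda^\delta$. This is the one feature special to rank-Berkeley cardinals that the argument relies on, and it is the reason ordinal parameters, rather than arbitrary ones, appear in the statement.
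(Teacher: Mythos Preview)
Your approach is correct and is essentially what the paper sketches: run the proof of Proposition~\ref{berkeley_to_vp} with $\lambda$ chosen correct enough that membership in $\mathcal{C}$ (and hence $F$) is absolute to $V_\lambda$, and arrange that the embedding fixes the defining ordinal parameter so that $j_n[\mathcal{C}_\lambda]\subseteq\mathcal{C}_\lambda$ without having to fix $\mathcal{C}_\lambda$ as an element. Your identification of Lemma~\ref{iterate_to_fix} as the device replacing Lemma~\ref{proto_berkeley_fix_any_b_trick} is exactly the intended point.

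One small imprecision: Lemma~\ref{iterate_to_fix}, as set up in the paper, fixes an arbitrary \emph{ordinal} in $V_\lambda$, so invoking it to fix $\tau$ directly is not quite justified when $\tau$ is not an ordinal. The clean repair is to use the full strength of rank-Berkeleyness at the outset: since $\tau\in V_\delta$, choose the initial $j\in\mathcal{E}_\lambda^\delta$ with $\crit j>\rank(\tau)$. By Lemma~\ref{application_op_and_critical_point} the critical points of the iterates satisfy $\crit j_{m+1}=j(\crit j_m)>\crit j_m$, so every $j_m$ still has critical point above $\rank(\tau)$ and hence fixes $\tau$; then iterate only to fix $\xi$. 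With that adjustment your argument goes through verbatim.
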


An analogue of Theorem~\ref{vp_to_berkeley_levels} for rank-Berkeley cardinals is achieved in an essentially similar way. The only detail that is not outright obvious is the fact that a limit of rank-Berkeley cardinals is again a rank-Berkeley cardinal. In the case of Berkeley cardinals, this is straightforward from the definition. But, for rank-Berkeley cardinals, Lemma~\ref{iterate_to_fix} is necessary. Thus, let $\delta$ be a limit of rank Berkeley cardinals and let $\alpha, \lambda$ be arbitrary ordinals satisfying $\alpha<\delta<\lambda.$ Fix a rank-Berkeley cardinal $\delta_0$ such that $\alpha<\delta_0<\delta,$ and let $j\in\mathcal{E}_\lambda^{\delta_0}$ be such that $\crit k>\alpha.$ Lemma~\ref{iterate_to_fix} is now necessary to find a $j_n\in\mathcal{E}_\lambda^{\delta_0}$ that fixes $\delta,$ so that $j_n\in\mathcal{E}_\lambda^\delta.$

\begin{theorem}[$\ZF + \neg\boldVP$]\label{vp_to_rank_berkeley_levels}
    If $\VP^\omega(\OR)$ holds, then there is an ordinal $\delta>\omega$ for which $\VP^\delta(\OR)$ holds while $\VP^{\delta+1}(\OR)$ fails, and moreover, $\delta$ is a rank-Berkeley cardinal.
\end{theorem}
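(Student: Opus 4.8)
The plan is to replay the proof of Theorem~\ref{vp_to_berkeley_levels} with two uniform substitutions: the ambient principle $\boldVP=\VP(V)$ is replaced everywhere by $\VP(\OR)$, and ``Berkeley'' by ``rank-Berkeley''. The opening move is identical. Since we work in $\ZF+\neg\boldVP$, Corollary~\ref{vp_to_boldvp} gives $\neg\VP(\OR)$, and Theorem~\ref{equivalence_of_vp_and_n_choiceless_extendibles} produces an $n\geq1$ for which there is no proper class of $n$-choiceless extendible cardinals; the contrapositive of the first paragraph of that proof then yields an ordinal $\alpha_0$ such that no ordinal above $\alpha_0$ is $\alpha_0$-$n$-choiceless extendible, and the same remains true for every larger ordinal in place of $\alpha_0$.

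The one genuinely new ingredient is to keep the witnessing class definable from ordinal parameters alone, so that $\VP(\OR)$ (rather than $\boldVP$) can be invoked. I would therefore replace the transitive-set parameter $M$ of Theorem~\ref{vp_to_berkeley_levels} by an ordinal $\lambda$ coding $V_\lambda$: for ordinals $\alpha,\lambda,\zeta$ set $\mathcal{C}(\alpha,\lambda,\zeta)$ to be the class of structures $(V_\beta,D(\alpha,n)\cap\beta,\alpha,\lambda,\xi)_{\xi\leq\zeta}$, with $D(\alpha,n)$ as in Theorem~\ref{equivalence_of_vp_and_n_choiceless_extendibles}, $\beta\in D(\alpha,n)$, $\lambda<\beta$, and $D(\alpha,n)\cap\beta\in V_\beta$. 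This class is $\Sigma_m(\OR)$ for an $m$ independent of $\lambda$ and $\zeta$; enlarging $m$ if needed, I also arrange that $\VP(\Sigma_m(\OR))$ fails, which is possible because $\VP(\OR)$ fails. I then put $\delta=\sup\{\mu\mid \VP^\mu(\Sigma_m(\OR))\text{ holds}\}$, which is at least $\omega$ since $\VP^\omega(\OR)$ forces $\VP^\omega(\Sigma_m(\OR))$ while $\VP(\Sigma_m(\OR))$ fails, and I fix a cardinal $\alpha$ above both $\alpha_0$ and $\delta$.

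Next I would verify that $\alpha$ is $\zeta$-proto rank-Berkeley for every $\zeta<\delta$. Fix such a $\zeta$ and an arbitrary $\lambda>\alpha$; the type of $\mathcal{C}(\alpha,\lambda,\zeta)$ lies in $V_\delta$ because $\zeta<\delta$, so $\VP^\delta(\Sigma_m(\OR))$ hands us an elementary embedding $j:(V_{\beta_1},\dots)\to(V_{\beta_2},\dots)$ with $\beta_1\neq\beta_2$. Exactly as before, $j$ is nontrivial (it sends $\sigma_1=\sup(D(\alpha,n)\cap\beta_1)$ to $\sigma_2=\sup(D(\alpha,n)\cap\beta_2)\neq\sigma_1$), its critical point exceeds $\zeta$ because of the constants $\xi\leq\zeta$, and it lies strictly below $\alpha$: it cannot be $\alpha$ since $\alpha$ is a distinguished constant, and it cannot exceed $\alpha$ since otherwise the restriction of $j$ to $V_\mu$ for a suitable $\mu\in C^{(n)}$ with $\sigma_1<\mu<\beta_1$ would contradict the instance of $\psi(\alpha,\sigma_1,\mu)$ supplied by $\beta_1\in D(\alpha,n)$. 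The point of the ordinal coding now pays off: $\lambda$ is a fixed constant, so $j(\lambda)=\lambda$, and hence $j\vert_{V_\lambda}:V_\lambda\to V_\lambda$ is a nontrivial elementary embedding with $\zeta<\crit j<\alpha$ and $j(\alpha)=\alpha$. As $\lambda>\alpha$ was arbitrary, this is precisely $\zeta$-proto rank-Berkeleyness of $\alpha$.

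Finally I would assemble the conclusion as in Theorem~\ref{vp_to_berkeley_levels}. By Proposition~\ref{rank_berkeley_from_proto}, for each $\zeta<\delta$ there is a rank-Berkeley cardinal $\delta_\zeta$ with $\zeta<\delta_\zeta\leq\alpha$, and by Proposition~\ref{rank_berkeley_to_vp} each $\delta_\zeta$ satisfies $\VP^{\delta_\zeta}(\OR)$, forcing $\delta_\zeta\leq\delta$ by the choice of $\delta$. Thus $\delta=\sup_\zeta\delta_\zeta$ is either one of the $\delta_\zeta$ or a limit of rank-Berkeley cardinals, so $\delta$ is rank-Berkeley in either case; then Proposition~\ref{rank_berkeley_to_vp} gives $\VP^\delta(\OR)$, the choice of $\delta$ together with $\Sigma_m\subseteq\Pi_{m+1}$ gives $\neg\VP^{\delta+1}(\OR)$, and $\delta$ being rank-Berkeley forces $\delta>\omega$. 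The main obstacle is exactly the new class construction together with the last reduction: one must confirm that coding $V_\lambda$ by the ordinal $\lambda$ both keeps $\mathcal{C}(\alpha,\lambda,\zeta)$ within $\Sigma_m(\OR)$ and still lets $j\vert_{V_\lambda}$ witness rank-Berkeleyness, and---as flagged in the discussion preceding the statement---that a limit of rank-Berkeley cardinals is again rank-Berkeley, which is not immediate and rests on Lemma~\ref{iterate_to_fix}.
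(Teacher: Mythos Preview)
Your proposal is correct and follows essentially the same approach as the paper: the paper states that the argument is the analogue of Theorem~\ref{vp_to_berkeley_levels} with $\VP(\OR)$ in place of $\boldVP$ and rank-Berkeley in place of Berkeley, singling out as the only nontrivial point that a limit of rank-Berkeley cardinals is rank-Berkeley via Lemma~\ref{iterate_to_fix}. You have correctly identified both required modifications---replacing the transitive-set parameter $M$ by an ordinal $\lambda$ so the witnessing class stays $\Sigma_m(\OR)$, and invoking Lemma~\ref{iterate_to_fix} for the limit step---and your detailed write-up matches what the paper only sketches.
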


Recall that $\boldVP$ and $\VP(\OR)$ are equivalent by Corollary~\ref{vp_to_boldvp}, so the two background theories $\ZF + \neg\boldVP$ and $\ZF + \neg\VP(\OR)$ are the same.

\begin{corollary}[$\ZF + \neg\boldVP$]
    If $\VP^\omega(\OR)$ holds, then
    $$
    \sup\{\mu\mid \VP^\mu(\OR)\textrm{ holds}\}=\sup\{\delta\mid \delta \text{ is rank-Berkeley}\}.
    $$
\end{corollary}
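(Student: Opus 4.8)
The plan is to mirror the proof of Corollary~\ref{cor:vp_to_berkeley_levels_sup}, reading it off from Theorem~\ref{vp_to_rank_berkeley_levels} and Proposition~\ref{rank_berkeley_to_vp} exactly as the Berkeley-case corollary is read off from Theorem~\ref{vp_to_berkeley_levels} and Proposition~\ref{berkeley_to_vp}. First I would establish the inequality $\sup\{\mu\mid \VP^\mu(\OR)\text{ holds}\}\leq\sup\{\delta\mid \delta\text{ is rank-Berkeley}\}$. For this, fix any $\mu$ with $\VP^\mu(\OR)$ holding; since $\boldVP$ fails, we may invoke Theorem~\ref{vp_to_rank_berkeley_levels}. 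Its proof in fact produces, for each $\zeta$ below the relevant supremum, a rank-Berkeley cardinal above $\zeta$ (via the proto-rank-Berkeley analysis and Proposition~\ref{rank_berkeley_from_proto}); in particular there are rank-Berkeley cardinals cofinally up to $\sup\{\mu\mid \VP^\mu(\OR)\text{ holds}\}$. Hence the supremum of rank-Berkeley cardinals is at least as large.

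For the reverse inequality $\sup\{\delta\mid \delta\text{ is rank-Berkeley}\}\leq\sup\{\mu\mid \VP^\mu(\OR)\text{ holds}\}$, I would use Proposition~\ref{rank_berkeley_to_vp}: every rank-Berkeley cardinal $\delta$ satisfies $\VP^\delta(\OR)$, so $\delta$ is one of the $\mu$ in the left-hand set. Thus every rank-Berkeley cardinal is bounded by the supremum on the right, giving the inequality directly. Combining the two inequalities yields the claimed equality.

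Concretely, the cleanest write-up simply cites the machinery already in place, in complete parallel to the one-line proof of Corollary~\ref{cor:vp_to_berkeley_levels_sup}. The one point requiring a touch of care is that Theorem~\ref{vp_to_rank_berkeley_levels} is stated as producing a \emph{single} witnessing $\delta$ with $\VP^\delta(\OR)$ holding and $\VP^{\delta+1}(\OR)$ failing; to conclude the cofinality statement I rely on the fact, noted in its proof, that the construction yields proto-rank-Berkeley (hence, by Proposition~\ref{rank_berkeley_from_proto}, genuine rank-Berkeley) cardinals above every $\zeta<\delta$, so that rank-Berkeley cardinals are cofinal up to the supremum of the levels at which $\VP^\mu(\OR)$ holds. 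I expect no real obstacle here: the delicate structural work has already been absorbed into Theorem~\ref{vp_to_rank_berkeley_levels} and the remark preceding it (which, via Lemma~\ref{iterate_to_fix}, guarantees that a limit of rank-Berkeley cardinals is again rank-Berkeley), so the corollary is a bookkeeping consequence.

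\begin{proof}
    By the proof of Theorem~\ref{vp_to_rank_berkeley_levels} and Proposition~\ref{rank_berkeley_to_vp}.
\end{proof}
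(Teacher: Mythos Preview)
Your proposal is correct and matches the paper's approach exactly: the paper gives no explicit proof for this corollary, leaving it to the reader as the direct rank-Berkeley analogue of Corollary~\ref{cor:vp_to_berkeley_levels_sup}, and your one-line proof ``By the proof of Theorem~\ref{vp_to_rank_berkeley_levels} and Proposition~\ref{rank_berkeley_to_vp}'' is precisely that analogue.
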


\begin{corollary}[$\ZF + \neg\boldVP$]
    The existence of a rank-Berkeley cardinal is equivalent to $\VP^\omega(\OR).$
\end{corollary}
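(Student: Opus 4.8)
The plan is to derive both implications from results already established for rank-Berkeley cardinals, mirroring exactly the proof of Corollary~\ref{berkeley_equivalent_to_finite_vp} for ordinary Berkeley cardinals, with Theorem~\ref{vp_to_rank_berkeley_levels} and Proposition~\ref{rank_berkeley_to_vp} playing the roles that Theorem~\ref{vp_to_berkeley_levels} and Proposition~\ref{berkeley_to_vp} play there.

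For the direction ``$\VP^\omega(\OR)$ implies the existence of a rank-Berkeley cardinal,'' I would simply invoke Theorem~\ref{vp_to_rank_berkeley_levels}: working in $\ZF+\neg\boldVP$ and assuming $\VP^\omega(\OR)$, that theorem directly produces an ordinal $\delta>\omega$ which is a rank-Berkeley cardinal. No further argument is needed.

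For the converse, suppose $\delta$ is a rank-Berkeley cardinal. By Proposition~\ref{rank_berkeley_to_vp}, $\VP^\delta(\OR)$ holds. The one observation to record is the monotonicity of the restricted principle in its superscript: whenever $\alpha\leq\beta$ one has $V_\alpha\subseteq V_\beta$, so any structure whose type lies in $V_\alpha$ also has its type in $V_\beta$, and hence $\VP^\beta(\OR)$ implies $\VP^\alpha(\OR)$. It therefore suffices to check that $\delta>\omega$. This is immediate from Definition~\ref{def:rank_berkeley}: for each $\zeta<\delta$ there is a nontrivial $j\in\mathcal{E}_\lambda^\delta$ with $\crit j>\zeta$, and the critical point of any nontrivial elementary embedding of a $V_\lambda$ is an uncountable cardinal, so the requirement $\crit j<\delta$ for arbitrarily large $\zeta<\delta$ forces $\delta$ to lie well above $\omega$. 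Combining monotonicity with $\delta>\omega$ gives $\VP^\delta(\OR)\Rightarrow\VP^\omega(\OR)$, completing the converse.

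There is essentially no obstacle in this argument, since each direction reduces to a previously cited fact; the only points warranting an explicit word are the monotonicity of $\VP^\alpha(\OR)$ in $\alpha$ and the elementary but necessary remark that every rank-Berkeley cardinal strictly exceeds $\omega$.
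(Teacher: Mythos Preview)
Your proposal is correct and matches the paper's intended argument: the paper states this corollary without proof, leaving it to follow immediately from Theorem~\ref{vp_to_rank_berkeley_levels} and Proposition~\ref{rank_berkeley_to_vp} in exactly the way you describe, parallel to Corollary~\ref{berkeley_equivalent_to_finite_vp}. The monotonicity remark and the observation that $\delta>\omega$ are the only details to supply, and you handle both correctly.
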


\begin{corollary}\label{rank_berkeley_weak_vp_to_strong_vp}
    If $\AC$ holds, then $\VP^\omega(\OR)$ implies $\VP(\OR).$
\end{corollary}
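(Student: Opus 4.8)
The plan is to argue by contraposition using the machinery already assembled for rank-Berkeley cardinals, mirroring exactly how Corollary~\ref{berkeley_weak_vp_to_strong_vp} follows from Theorem~\ref{vp_to_berkeley_levels}. I would work in $\ZFC$ and assume $\VP^\omega(\OR).$ Suppose, toward a contradiction, that $\VP(\OR)$ fails. By Corollary~\ref{vp_to_boldvp}, the schemas $\VP(\OR)$ and $\boldVP$ are equivalent, so their negations coincide; hence $\neg\boldVP$ holds, and I may freely invoke any result proved in the background theory $\ZF + \neg\boldVP.$

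With $\neg\boldVP$ in hand, Theorem~\ref{vp_to_rank_berkeley_levels} applies directly. Its hypothesis is precisely $\VP^\omega(\OR),$ which we are assuming, so it yields an ordinal $\delta$ that is a rank-Berkeley cardinal. By Definition~\ref{def:rank_berkeley}, for every $\lambda>\delta$ there is a nontrivial elementary embedding $j:V_\lambda\rightarrow V_\lambda;$ in particular, taking $\lambda=\delta+2,$ we obtain a nontrivial elementary embedding $j:V_{\delta+2}\rightarrow V_{\delta+2}.$ But this contradicts Kunen's inconsistency in the form of Theorem~\ref{kunen_inconsistency_alternare_form}, which asserts that under $\AC$ no such embedding exists. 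This contradiction shows that $\VP(\OR)$ cannot fail, completing the argument.

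The only substantive point is the observation that the existence of a rank-Berkeley cardinal is outright inconsistent with $\AC$, via the self-embeddings of rank initial segments it provides together with Kunen's inconsistency; everything else is a direct chaining of previously established facts. I expect the main obstacle, such as it is, to be bookkeeping: confirming that the hypothesis $\VP^\omega(\OR)$ is exactly what Theorem~\ref{vp_to_rank_berkeley_levels} requires, and that the passage from $\neg\VP(\OR)$ to $\neg\boldVP$—needed to legitimately enter the $\ZF + \neg\boldVP$ context—is licensed by Corollary~\ref{vp_to_boldvp}. No genuinely new construction is required; the result is the rank-Berkeley analogue of Corollary~\ref{berkeley_weak_vp_to_strong_vp}, with the self-embedding $j:V_\lambda\rightarrow V_\lambda$ supplied by rank-Berkeleyness playing the role that the Berkeley-cardinal embeddings play there.
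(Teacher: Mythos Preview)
Your proposal is correct and follows precisely the route the paper intends: the corollary is stated without proof because it is the immediate contrapositive consequence of Theorem~\ref{vp_to_rank_berkeley_levels} (equivalently, the preceding corollary) together with the fact that under $\AC$ a rank-Berkeley cardinal is impossible by Theorem~\ref{kunen_inconsistency_alternare_form}. Your use of Corollary~\ref{vp_to_boldvp} to pass from $\neg\VP(\OR)$ to $\neg\boldVP$ is exactly the bookkeeping step the paper alludes to in the sentence following Theorem~\ref{vp_to_rank_berkeley_levels}.
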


This last corollary is a much stronger result than the analogous Corollary~\ref{berkeley_weak_vp_to_strong_vp}. We already know that $\VP(\OR)$ is equivalent to $\boldVP$ (Corollary~\ref{vp_to_boldvp}). By using the proof of the case (i) implies (ii) of \ref{vp_to_boldvp}, we can also show that $\VP^\omega$ implies $\VP^\omega(\OR).$ Putting everything together, we now have the following:

\begin{corollary}\label{weakest_vp_to_strongest_vp}
    If $\AC$ holds, then $\VP^\omega$ implies (and hence is equivalent to) $\boldVP.$
\end{corollary}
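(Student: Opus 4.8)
The plan is to chain together the three implications that the paragraph preceding the statement has already set up, so the proof of Corollary~\ref{weakest_vp_to_strongest_vp} is essentially a bookkeeping argument. First I would invoke Corollary~\ref{rank_berkeley_weak_vp_to_strong_vp}, which gives (under $\AC$) that $\VP^\omega(\OR)$ implies $\VP(\OR)$. Next I would use the observation, already noted in the excerpt, that $\VP^\omega$ implies $\VP^\omega(\OR)$; this follows by the argument used for the implication (i)$\implies$(ii) of Corollary~\ref{vp_to_boldvp}, where one replaces an arbitrary-parameter counterexample by the ordinal-definable least instance. Finally, $\VP(\OR)$ is equivalent to $\boldVP$ by Corollary~\ref{vp_to_boldvp}. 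Composing these three facts yields that $\VP^\omega$ implies $\boldVP$, and since $\boldVP$ trivially implies $\VP^\omega$, they are equivalent.

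Concretely, I would write: assume $\AC$ and suppose $\VP^\omega$ holds. I would first upgrade $\VP^\omega$ to $\VP^\omega(\OR)$. The mechanism is the same trick used in the proof of Corollary~\ref{vp_to_boldvp}: if some class $\mathcal{C}$ of structures of finite type, definable by a formula $\phi(x,\bar\beta)$ with ordinal parameters $\bar\beta$, were a counterexample to $\VP^\omega(\OR)$, then the lexicographically least tuple $\bar\beta$ for which $\phi(x,\bar\beta)$ defines such a counterexample is itself definable \emph{without parameters}, producing a parameter-free counterexample of finite type and contradicting $\VP^\omega$. Then $\VP^\omega(\OR)$ gives $\VP(\OR)$ by Corollary~\ref{rank_berkeley_weak_vp_to_strong_vp}, and $\VP(\OR)$ gives $\boldVP$ by Corollary~\ref{vp_to_boldvp}.

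The step I expect to carry the real content is not in this corollary at all but upstream, in Corollary~\ref{rank_berkeley_weak_vp_to_strong_vp}, which passes from the weak hypothesis $\VP^\omega(\OR)$ (only finite types) to the full schema $\VP(\OR)$ over all types; that is where the rank-Berkeley machinery of Theorem~\ref{vp_to_rank_berkeley_levels} is actually doing the work. Within Corollary~\ref{weakest_vp_to_strongest_vp} itself, the only nontrivial point to be careful about is the parameter-elimination in the first step: I must check that reducing ordinal parameters to no parameters genuinely preserves the restriction to \emph{finite} types, so that the resulting counterexample still lies in the scope of $\VP^\omega$ rather than merely $\VP(\OR)$. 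Since definability of the least witnessing tuple adds no new structure to the type, this is routine, but it is the one place where the finite-type bookkeeping must be verified explicitly.
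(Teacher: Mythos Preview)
Your proposal is correct and follows essentially the same route as the paper: upgrade $\VP^\omega$ to $\VP^\omega(\OR)$ via the least-ordinal-parameter trick from Corollary~\ref{vp_to_boldvp}, then apply Corollary~\ref{rank_berkeley_weak_vp_to_strong_vp} to get $\VP(\OR)$, and finish with Corollary~\ref{vp_to_boldvp} to reach $\boldVP$. Your observation that the parameter-elimination step preserves the finite-type restriction is exactly the one detail worth noting, and it is indeed routine since passing to the definable least tuple does not alter the type of the structures involved.
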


In other words, assuming $\AC,$ the weakest form of Vop\v enka's Principle, where we only allow for definable, with no parameters, proper classes of structures of the same finite type, implies the strongest form of Vop\v enka's Principle, where we allow for all definable, with parameters, proper classes of structures of any type.

Let $\BC$ and $\rBC$ denote the axioms asserting the existence of a Berkeley cardinal and a rank-Berkeley cardinal, respectively. We show next that the theories $\ZF+\boldVP^\omega+\neg\boldVP$ and $\ZF+\BC$ are equiconsistent. A cardinal $\kappa$ is \emph{inaccessible} iff there is no cofinal map $f:V_\alpha\rightarrow \kappa,$ for any $\alpha<\kappa.$ It is easy to show that if $\kappa$ is inaccessible then $(V_\kappa,V_{\kappa+1})\satisfies \ZF_2,$ where $\ZF_2$ is the second-order Zermelo-Fraenkel set theory. Moreover, the critical point of any nontrivial elementary embedding $j:V_\mu\rightarrow V_\nu$ is an inaccessible cardinal.

\begin{theorem}\label{equiconsistency_theories_berkeley_and_finite_vp}
    The theory $\ZF+\BC$ is equiconsistent with $\ZF+\boldVP^\omega+\neg\boldVP.$
\end{theorem}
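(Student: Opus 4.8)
The plan is to establish the two consistency implications separately, with all the real work in one direction. The easy implication is immediate from what precedes: by Corollary~\ref{berkeley_equivalent_to_finite_vp}, the theory $\ZF+\neg\boldVP+\boldVP^\omega$ proves the existence of a Berkeley cardinal, i.e.\ $\ZF+\boldVP^\omega+\neg\boldVP\vdash\BC$. Hence every model of $\ZF+\boldVP^\omega+\neg\boldVP$ is already a model of $\ZF+\BC$, and $\con{\ZF+\boldVP^\omega+\neg\boldVP}$ implies $\con{\ZF+\BC}$ with nothing further to prove.

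For the substantive direction I would start from a model $M\models\ZF+\BC$ and fix a Berkeley cardinal $\delta$ in $M$. The goal is to produce a rank-initial segment of $M$ satisfying $\ZF+\BC+\neg\boldVP$; by Proposition~\ref{berkeley_to_vp} any such model automatically satisfies $\boldVP^\omega$, so it witnesses $\con{\ZF+\boldVP^\omega+\neg\boldVP}$. Two facts drive the construction. First, if $\kappa>\delta$ is worldly (i.e.\ $V_\kappa\models\ZF$) then $\delta$ remains Berkeley in $V_\kappa$: any transitive $N\ni\delta$ with $N\in V_\kappa$ also lies in $M$, so $M$ supplies an embedding $j:N\to N$ with $\zeta<\crit j<\delta$, and since $j\subseteq N\times N$ has rank below the limit ordinal $\kappa$ we get $j\in V_\kappa$, with elementarity and the bounds on $\crit j$ being absolute; thus $V_\kappa\models\ZF+\BC$ for every worldly $\kappa>\delta$. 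Second, and this is the crux, $\boldVP$ implies that there are unboundedly many inaccessible cardinals: by Theorem~\ref{equivalence_of_vp_and_n_choiceless_extendibles}, $\boldVP$ yields a proper class of $1$-choiceless extendible cardinals, and for each such $\gamma$ and each $\alpha<\gamma$ the witnessing embedding has critical point above $\alpha$ and at most $\gamma$, which is inaccessible by the fact recalled immediately before this theorem; hence inaccessible cardinals are cofinal in $\OR$.

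With these in hand, let $\lambda$ be the least worldly cardinal above $\delta$, if one exists. Then $V_\lambda\models\ZF+\BC$, and I claim $V_\lambda\models\neg\boldVP$: otherwise $V_\lambda\models\boldVP$ would, by the crux fact applied inside $V_\lambda$, produce an inaccessible—hence worldly—cardinal $\kappa$ with $\delta<\kappa<\lambda$, contradicting the minimality of $\lambda$; here one uses that inaccessibility of a $\kappa<\lambda$ is absolute between $V_\lambda$ and $M$, since all relevant cofinal maps lie in $V_\lambda$. If instead $M$ has no worldly cardinal above $\delta$, then $M$ has no inaccessible above $\delta$, so by the contrapositive of the crux fact $M\models\neg\boldVP$ already, and $M$ itself is the desired model. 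In either case we obtain a model of $\ZF+\BC+\neg\boldVP$, completing the reverse implication.

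The main obstacle is the crux fact that $\boldVP$ forces unboundedly many inaccessibles, since this is precisely what makes cutting at a minimal worldly cardinal destroy $\boldVP$ while preserving the Berkeley cardinal. The supporting absoluteness claims—that Berkeleyness of $\delta$ passes to each $V_\kappa$ and that inaccessibility is absolute below $\lambda$—are routine but must be verified to make the cut legitimate. I would also be careful to work with \emph{worldly} cardinals rather than merely $\Sigma_n$-correct ones throughout, since the clubs $C^{(n)}$ contain singular cardinals which need not satisfy $\ZF$; it is for this reason that the crux step is phrased in terms of inaccessibility (obtained from critical points of embeddings) rather than correctness.
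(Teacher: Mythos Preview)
Your argument is correct, but it follows a genuinely different route from the paper's. The paper handles the hard direction by a G\"odelian contradiction: assuming $\ZF+\BC$ is consistent while $\ZF+\boldVP^\omega+\neg\boldVP$ is not, every model of $\ZF+\BC$ satisfies $\boldVP$, so by completeness $\ZF+\BC\vdash\VP(\Pi_1(V))$; applying $\VP(\Pi_1(V))$ to the explicit class $\{(V_{\alpha+1},\xi)_{\xi\leq\delta}\mid\alpha>\delta\}$ produces an inaccessible $\kappa>\delta$ with $V_\kappa\models\ZF+\BC$, so $\ZF+\BC$ proves its own consistency, contradicting the second incompleteness theorem. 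You instead construct the desired model directly by cutting at the least worldly cardinal above $\delta$, and you derive the nonexistence of $\boldVP$ there from your ``crux fact'' that $\boldVP$ forces cofinally many inaccessibles, obtained via Theorem~\ref{equivalence_of_vp_and_n_choiceless_extendibles}.

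Both proofs rest on the same underlying observation---a fragment of Vop\v enka yields an inaccessible above the Berkeley cardinal---but package it differently. The paper's route is shorter and needs only the very weak fragment $\VP(\Pi_1(V))$ applied to one concrete class, at the cost of invoking completeness and incompleteness. Your route avoids metamathematical machinery entirely and gives an explicit inner model, but leans on the heavier Theorem~\ref{equivalence_of_vp_and_n_choiceless_extendibles} to extract the inaccessibles; you could streamline this step by using the paper's simple $\Pi_1$ class directly inside $V_\lambda$ rather than passing through $1$-choiceless extendibles. The absoluteness checks you flag (Berkeleyness of $\delta$ down to $V_\lambda$, inaccessibility of $\kappa<\lambda$ up to $M$) are handled correctly.
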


\begin{proof}
    Theorem~\ref{vp_to_berkeley_levels} deals with one direction. For the other direction, assume, for a contradiction, that $\ZF+\BC$ is consistent while $\ZF+\boldVP^\omega+\neg\boldVP$ is not. That means every model of the former is also a model of $\boldVP$ (by Proposition~\ref{berkeley_to_vp}). In particular, all models of $\ZF+\BC$ are models of $\VP(\Pi_1(V)).$ By G\"odel's completeness theorem, this means that there is a proof of $\VP(\Pi_1(V))$ from $\ZF+\BC.$ But, the theory $\ZF+\BC+\VP(\Pi_1(V))$ proves $\con{\ulcorner\ZF+\BC\urcorner}$ as shown in the next paragraph. This contradicts G\"odel's second incompleteness theorem.

    To show that $\ZF+\BC+\VP(\Pi_1(V))$ proves $\con{\ulcorner\ZF+\BC\urcorner},$ let $\delta$ be a Berkeley cardinal and consider the $\Pi_1(V)$ class $\mathcal{C}=\{(V_{\alpha+1},\xi)_{\xi\leq\delta}\mid \alpha>\delta\}.$ By $\VP(\Pi_1(V)),$ fix an elementary embedding $j:(V_{\alpha_1+1},\xi)_{\xi\leq\delta}\rightarrow (V_{\alpha_2+1},\xi)_{\xi\leq\delta},$ where $\alpha_1\neq \alpha_2.$ Notice that $j(\alpha_1)=\alpha_2$ and $j(\xi)=\xi$ for all $\xi\leq\delta.$ Thus, $j$ has a critical point $\kappa>\delta,$ and therefore $V_\kappa\satisfies\ulcorner\ZF+\BC\urcorner.$
\end{proof}

The case of rank-Berkeley cardinals is as follows:

\begin{theorem}
    The theory $\ZF+\rBC$ is equiconsistent with $\ZF+\VP^\omega(\OR)+\neg\VP(\OR).$
\end{theorem}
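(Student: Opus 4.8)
The plan is to transcribe the proof of Theorem~\ref{equiconsistency_theories_berkeley_and_finite_vp}, systematically replacing Berkeley cardinals by rank-Berkeley cardinals, $\boldVP$ by $\VP(\OR)$, and Proposition~\ref{berkeley_to_vp} by Proposition~\ref{rank_berkeley_to_vp}. One direction is handled by Theorem~\ref{vp_to_rank_berkeley_levels}: since $\neg\VP(\OR)$ and $\neg\boldVP$ coincide (Corollary~\ref{vp_to_boldvp}), that theorem shows $\ZF + \VP^\omega(\OR) + \neg\VP(\OR)$ proves $\rBC$, so every model of the former is a model of $\ZF + \rBC$, whence $\con{\ulcorner\ZF + \rBC\urcorner}$ follows from $\con{\ulcorner\ZF + \VP^\omega(\OR) + \neg\VP(\OR)\urcorner}$.

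For the converse, I would argue by contradiction exactly as before. Suppose $\ZF + \rBC$ is consistent but $\ZF + \VP^\omega(\OR) + \neg\VP(\OR)$ is not. By Proposition~\ref{rank_berkeley_to_vp} a rank-Berkeley cardinal $\delta$ yields $\VP^\delta(\OR)$, and as $\delta > \omega$ this gives $\VP^\omega(\OR)$; hence $\ZF + \rBC \vdash \VP^\omega(\OR)$. The assumed inconsistency forces $\ZF + \VP^\omega(\OR) \vdash \VP(\OR)$, so $\ZF + \rBC \vdash \VP(\OR)$ and in particular $\ZF + \rBC \vdash \VP(\Pi_1(\OR))$. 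It then suffices to show $\ZF + \rBC + \VP(\Pi_1(\OR)) \vdash \con{\ulcorner\ZF + \rBC\urcorner}$, which contradicts G\"odel's second incompleteness theorem and refutes the assumption.

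To obtain the consistency proof, let $\delta$ be rank-Berkeley and form the class $\mathcal{C} = \{(V_{\alpha+1}, \xi)_{\xi\leq\delta} \mid \alpha > \delta\}$; its only parameter is the ordinal $\delta$, so it is $\Pi_1(\OR)$ by the same complexity count used for the $\Pi_1(V)$ class in Theorem~\ref{equiconsistency_theories_berkeley_and_finite_vp}. Applying $\VP(\Pi_1(\OR))$ produces a nontrivial elementary embedding $j : (V_{\alpha_1+1}, \xi)_{\xi\leq\delta} \to (V_{\alpha_2+1}, \xi)_{\xi\leq\delta}$ with $\alpha_1 \neq \alpha_2$. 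Since $j$ fixes every $\xi \leq \delta$ but sends $\alpha_1$ to $\alpha_2$, its critical point $\kappa$ satisfies $\kappa > \delta$; being the critical point of an elementary embedding of rank initial segments, $\kappa$ is inaccessible, so $V_\kappa \satisfies \ZF$. It remains to check $V_\kappa \satisfies \rBC$, and here I would verify that $\delta$ is still rank-Berkeley in $V_\kappa$: for each $\lambda$ with $\delta < \lambda < \kappa$, a witnessing self-embedding of $V_\lambda$ from $V$ lies in $V_{\lambda+2} \subseteq V_\kappa$, and its elementarity is correctly computed there because $\kappa$ is inaccessible. Thus $V_\kappa \satisfies \ZF + \rBC$, as required.

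The only genuinely new point over Theorem~\ref{equiconsistency_theories_berkeley_and_finite_vp}, and the step I expect to need the most care, is this last reflection of rank-Berkeleyness down to $V_\kappa$: whereas the Berkeley definition quantifies over arbitrary transitive sets, here the witnesses are self-embeddings of the $V_\lambda$, so one must confirm both that they belong to $V_\kappa$ and that $V_\kappa$ recognizes their elementarity. Both follow from inaccessibility of $\kappa$ together with the fact that the relevant witnesses sit well below $\kappa$; everything else is a routine transcription.
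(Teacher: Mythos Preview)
Your proposal is correct and follows exactly the route the paper intends: the paper gives no explicit proof for the rank-Berkeley theorem, leaving it as a straightforward transcription of Theorem~\ref{equiconsistency_theories_berkeley_and_finite_vp} with the substitutions you describe. Your extra care in verifying that $\delta$ remains rank-Berkeley in $V_\kappa$ (via $V_\lambda^{V_\kappa}=V_\lambda$ and $j\in V_{\lambda+2}\subseteq V_\kappa$) is the one point that genuinely needs checking beyond the Berkeley case, and you handle it correctly; the paper's proof of the Berkeley version simply asserts $V_\kappa\satisfies\ulcorner\ZF+\BC\urcorner$ without comment, and the analogous assertion here requires precisely the observation you supply.
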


\section{Class Many Rank-Berkeley Cardinals}\label{section_equiconsistency_or_not_mahlo_theory}

In this section, we will consider the failure in $\ZF$ of two results; Theorem~\ref{bagaria_cn_extendibles_and_vp} and a consequence of it. Since a proper class of $C^{(n)}$-extendible cardinals always implies $\VP(\mathbf{\Pi_{n+1}})$ (Theorem~\ref{equivalence_of_vp_and_n_choiceless_extendibles}), failure of Theorem~\ref{bagaria_cn_extendibles_and_vp} can only happen if $\VP(\mathbf{\Pi_{n+1}})$ holds while there are no $C^{(n)}$-extendible cardinals beyond some ordinal $\xi.$ We will show that this failure implies the existence of unboundedly many rank-Berkeley cardinals, and then establish the equiconsistency of these two theories.

An interesting consequence of Theorem~\ref{bagaria_cn_extendibles_and_vp} is that $\boldVP$ implies $\OR$ is Mahlo (Definition~\ref{definition_or_is_mahlo}) \cite[Lemma 6.3]{more_on_preservation}. Since this equivalence may fail in the context of $\ZF,$ and since there is no guarantee so far that $n$-choiceless extendible cardinals are inaccessible, one must wonder whether it is possible to have $\boldVP$ while $\OR$ fails to be Mahlo. We will show that this implies the existence of unboundedly many-rank Berkeley cardinals as well, and then prove the equiconsistency of the two theories.

We start by proving some intermediary results. Lemma~\ref{gluing_choiceless_supercompacts} and Lemma~\ref{creating_choiceless_supercompacts} below are generalizations to $n$-choiceless supercompact cardinals of results by Menachem Magidor \cite[Lemmas 1 \& 2, resp.]{magidor}, modulo slightly stronger assumptions.

\begin{lemma}\label{gluing_choiceless_supercompacts}
    For $n\geq0,$ given $\alpha<\kappa<\delta<\lambda$ such that $(\kappa,\delta)\cap C^{(n)}\neq \emptyset,$ if $\kappa$ is $(\alpha,<\!\delta,<\!V_\delta,n)$-choiceless supercompact and $\delta$ is $(\alpha,\lambda,<\!V_\lambda,n)$-choiceless supercompact, then $\kappa$ is $(\alpha,\lambda,<\!V_\lambda,n)$-choiceless supercompact as well.
\end{lemma}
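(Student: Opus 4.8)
The plan is to mirror the proof of the $\Sigma_n$-supercompact gluing lemma à la Magidor: first reflect the target $V_\lambda$ down below $\delta$ using the supercompactness of $\delta$, then push it further down below $\kappa$ using the supercompactness of $\kappa$, and finally compose the two embeddings. Fix an arbitrary $a\in V_\lambda$; I will produce a witness to $(\alpha,\lambda,a,n)$-choiceless supercompactness of $\kappa$. Applying the $(\alpha,\lambda,<\!V_\lambda,n)$-choiceless supercompactness of $\delta$ to $a$ gives $\bar\lambda<\delta$ in $C^{(n)}$, a set $\bar a\in V_{\bar\lambda}$, and an elementary $j:V_{\bar\lambda}\to V_\lambda$ with $\crit j>\alpha$ and $j(\bar a)=a$.

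Next I would split on how $\bar\lambda$ compares to $\kappa$. If $\bar\lambda<\kappa$, then $j$ itself already witnesses $(\alpha,\lambda,a,n)$-choiceless supercompactness of $\kappa$, since its source has rank $\bar\lambda<\kappa$ and lies in $C^{(n)}$, so there is nothing more to do. If $\bar\lambda>\kappa$, then $\bar\lambda\in(\kappa,\delta)\cap C^{(n)}$ is a legitimate target for the $(\alpha,<\!\delta,<\!V_\delta,n)$-choiceless supercompactness of $\kappa$, which I apply to the parameter $\bar a\in V_{\bar\lambda}\subseteq V_\delta$ to obtain $\dbar{\lambda}<\kappa$ in $C^{(n)}$, a set $\dbar{a}\in V_{\dbar{\lambda}}$, and an elementary $k:V_{\dbar{\lambda}}\to V_{\bar\lambda}$ with $\crit k>\alpha$ and $k(\dbar{a})=\bar a$. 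The composite $l=j\circ k:V_{\dbar{\lambda}}\to V_\lambda$ is then elementary, satisfies $l(\dbar{a})=j(\bar a)=a$, and has $\crit l=\min(\crit j,\crit k)>\alpha$ (for $\xi<\min(\crit j,\crit k)$ both maps fix $\xi$, while $\crit k$ is moved); since $\dbar{\lambda}<\kappa$ lies in $C^{(n)}$, the map $l$ is the desired witness. As $a$ was arbitrary, this settles the conclusion in these two cases.

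The one remaining possibility is $\bar\lambda=\kappa$, which forces $\kappa\in C^{(n)}$ (as $\bar\lambda\in C^{(n)}$), and this is exactly where the hypothesis $(\kappa,\delta)\cap C^{(n)}\neq\emptyset$ enters — it is the ``slightly stronger assumption'' alluded to in the text. Here I would fix $\eta\in(\kappa,\delta)\cap C^{(n)}$ and apply the $(\alpha,<\!\delta,<\!V_\delta,n)$-choiceless supercompactness of $\kappa$ at the target $\eta$, to the parameter $\langle\bar a,\kappa\rangle\in V_\eta$ (note $\bar a\in V_\kappa\subseteq V_\eta$ and $\kappa<\eta$), obtaining $k:V_{\dbar{\lambda}}\to V_\eta$ with $\dbar{\lambda}<\kappa$ in $C^{(n)}$, $\crit k>\alpha$, $k(\dbar{a})=\bar a$, and $k(\dbar{\kappa})=\kappa$ for $\dbar{\kappa}:=k^{-1}(\kappa)<\dbar{\lambda}$. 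Since $k$ commutes with the rank function, $\dbar{\kappa}\in C^{(n)}$ by elementarity together with $\kappa\in(C^{(n)})^{V_\eta}$, and $k\vert_{V_{\dbar{\kappa}}}:V_{\dbar{\kappa}}\to V_{k(\dbar{\kappa})}=V_\kappa$; moreover $\mathrm{rank}(\bar a)=k(\mathrm{rank}(\dbar{a}))<\kappa=k(\dbar{\kappa})$ gives $\mathrm{rank}(\dbar{a})<\dbar{\kappa}$, so $\dbar{a}\in V_{\dbar{\kappa}}$ and $k(\dbar{a})=\bar a$. Composing with $j:V_\kappa\to V_\lambda$ produces $l=j\circ(k\vert_{V_{\dbar{\kappa}}}):V_{\dbar{\kappa}}\to V_\lambda$ with $l(\dbar{a})=a$, $\crit l>\alpha$, and $\dbar{\kappa}<\kappa$ in $C^{(n)}$, as required.

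The main obstacle, and the step I expect to demand the most care, is ensuring that the restricted embedding $k\vert_{V_{\dbar{\kappa}}}$ in this last case is genuinely nontrivial, i.e.\ that $\crit k<\dbar{\kappa}$ rather than $\crit k=\dbar{\kappa}$. In the degenerate situation $\crit k=\dbar{\kappa}$, rank preservation forces $\bar a$ to have rank below $\crit k$ and hence be fixed by $k$, so that $\bar a=\dbar{a}$; I would then argue that $\bar a$ is itself small enough to be reflected directly, or re-choose the parameter fed to $\kappa$'s supercompactness (adding an ordinal strictly between $\mathrm{rank}(\bar a)$ and $\kappa$) to push $\crit k$ strictly below $\dbar{\kappa}$. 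Verifying that one of these always yields a nontrivial witness with domain in $C^{(n)}$ is the delicate point on which the argument hinges; the easier cases $\bar\lambda<\kappa$ and $\bar\lambda>\kappa$, and the composition bookkeeping for the critical point, should go through routinely.
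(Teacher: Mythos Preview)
Your proof is correct and mirrors the paper's exactly: the same three-way split on the position of $\bar\lambda$ relative to $\kappa$, and in the case $\bar\lambda=\kappa$ the same passage through an auxiliary target in $(\kappa,\delta)\cap C^{(n)}$ (the paper calls it $\gamma$, you call it $\eta$) followed by restriction and composition. Your final-paragraph worry about nontriviality of $k\vert_{V_{\dbar\kappa}}$ is unnecessary: the condition ``$\crit l>\alpha$'' in the definition should be read simply as ``$l$ fixes every ordinal $\le\alpha$,'' which the composite satisfies regardless of whether it moves any ordinal at all --- the paper does not pause over this point, and neither need you.
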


\begin{proof}
    Fix $a\in V_\lambda.$ We need to show that $\kappa$ is $(\alpha,\lambda,a,n)$-choiceless supercompact. Since $\delta$ is $(\alpha,\lambda,<\!V_\lambda,n)$-choiceless supercompact, we get an elementary embedding $j:V_{\bar\lambda}\rightarrow V_\lambda$ such that $\crit j>\alpha,$\ $\bar\lambda<\delta$ and is in $C^{(n)},$ and there is some $\bar a\in V_{\bar\lambda}$ for which $j(\bar a)=a.$ Now, if it so happens that $\bar \lambda<\kappa,$ then this $j$ witnesses that $\kappa$ is indeed $(\alpha,\lambda,a,n)$-choiceless supercompact.

    Else, if $\bar\lambda = \kappa,$ we proceed as follows. Fix a $\gamma\in (\kappa,\delta)\cap C^{(n)}.$ By using $(\alpha,<\!\delta,<\!V_\delta,n)$-choiceless supercompactness of $\kappa,$ fix an elementary embedding $k:V_{\bar\gamma}\rightarrow V_\gamma$ such that $\crit k>\alpha,$\ $\bar\gamma<\kappa$ and is in $C^{(n)},$ and there are $\dbar a, \dbar\lambda\in V_{\bar\gamma}$ for which $k(\dbar a)=\bar a$ and $k(\dbar\lambda)=\bar\lambda.$ Since $\bar a\in V_{\bar\lambda},$ we must have $\dbar a\in V_{\dbar \lambda}$ by elementarity. Now, the composite map $j\circ k\vert_{V_{\dbar\lambda}}:V_{\dbar\lambda}\rightarrow V_\lambda$ is an elementary embedding with critical point strictly above $\alpha$ and $(j\circ k\vert_{V_{\dbar\lambda}})(\dbar a) = j(\bar a) = a.$ It remains to show that $\dbar\lambda<\kappa$ and is in $C^{(n)}.$ But this is clear, since $\dbar\lambda<\bar \gamma<\kappa,$ while $\dbar\lambda\in C^{(n)}$ by elementarity of $k$ and the fact that $\bar \gamma, \gamma, \bar \lambda \in C^{(n)}.$

    Finally, we need to consider the case where $\bar\lambda>\kappa.$ This is similar to the above, but simpler, because now we do not need a $\gamma,$ and we can just use $\bar\lambda$ itself in place of $\gamma$ in the above argument.
\end{proof}

%In Magidor's work, whenever there is an elementary embedding $j:V_\mu\rightarrow V_\nu$ witnessing some level of extendibility, one is interested in its critical sequence. This sequence was the ``first sequence'' satisfying the relevant properties. Here, however, in order to derive the same results for the $n$-choiceless case, the sequence of interest is not the first one, but rather, the ``last'' one. 

\begin{definition}
    Given a nontrivial elementary embedding $j:V_\mu\rightarrow V_\nu$ such that $\sup\{\beta\mid j(\beta)=\beta\}<\mu,$ we define the \emph{last sequence} of $j$ to be the longest sequence $\langle\gamma_i\rangle_{i\in m(j)}$ satisfying $\gamma_0=\sup\{\beta\mid j(\beta)=\beta\}$ and $\gamma_i=j(\gamma_{i-1})$ for all nonzero $i\in m(j),$ where $m(j)\leq \omega.$ The ordinal $\gamma_0$ will be called the \emph{last point} of $j.$
\end{definition}

%Notice that in the case of $j:V_\mu\rightarrow V_\nu$ where there is no $\beta>\crit j$ fixed by $j,$ the last sequence and the critical sequence coincide.

The following lemma is easy.

\begin{lemma}\label{correctness_trick}
    For any $n\geq0,$ if for some ordinals $\alpha<\beta<\gamma$ we have $\gamma \in C^{(n)},$ $\beta\in C^{(n+1)},$ and $\alpha\in (C^{(n+1)})^{V_\gamma},$ then $\alpha\in C^{(n+1)}.$
\end{lemma}

\begin{comment}
\begin{proof}
    $\gamma\in C^{(n)}$ and $\beta\in C^{(n+1)}$ imply $\beta\in (C^{(n+1)})^{V_\gamma}.$ That along with $\alpha\in (C^{(n+1)})^{V_\gamma}$ imply $V_\gamma\satisfies ``V_\alpha \prec_{\Sigma_{n+1}}V_\beta".$ The satisfaction relation, $\satisfies,$ for set models is $\Delta_1,$ so actually $V\satisfies ``V_\alpha \prec_{\Sigma_{n+1}}V_\beta".$ That is to say $V_\beta \satisfies ``\alpha\in C^{(n+1)}".$ But, $\beta\in C^{(n+1)},$ so it is correct in its identification of $\Sigma_{n+1}$-correct ordinals. Therefore, $\alpha\in C^{(n+1)}.$
\end{proof}
\end{comment}

\begin{lemma}\label{creating_choiceless_supercompacts}
    For $n\geq 0,$ if $j:V_\mu\rightarrow V_\nu,$ for some $\mu\in \lim{C^{(n+1)}}$ and $\nu\in C^{(n)},$ is an elementary embedding with critical point $\kappa$ and last sequence $\langle \gamma_i\rangle_{i\in m(j)},$ then $\gamma_0,$ the last point of $j,$ is $(<\!\kappa, <\!\mu, <\!V_\mu, n+1)$-choiceless supercompact.
\end{lemma}

\begin{proof}
    First, let us consider the segment $(\gamma_0,\gamma_1\cap\mu).$ Fix some $\alpha<\kappa,$ $\lambda\in(\gamma_0,\gamma_1\cap\mu)\cap C^{(n+1)},$ and $a\in V_\lambda.$ We must show that $\gamma_0$ is $(\alpha,\lambda,a,n+1)$-choiceless supercompact. Notice that $\mu,\lambda\in C^{(n+1)}$ implies $V_\mu\satisfies ``\lambda\in C^{(n+1)}",$ and so $V_{\nu}\satisfies ``j(\lambda)\in C^{(n+1)}"$ by elementarity. Also, $\nu \in C^{(n)}$ and $\lambda\in C^{(n+1)}$ imply $V_\nu\satisfies ``\lambda\in C^{(n+1)}".$ Therefore, the restricted map $j\vert_{V_\lambda}:V_\lambda \rightarrow V_{j(\lambda)}$ witnesses that $\gamma_1=j(\gamma_0)$ is $(\alpha,j(\lambda),j(a),n+1)$-choiceless supercompact in $V_\nu.$\footnote{$j\vert_{V_\lambda}$ is in $V_\nu,$ as $\nu$ is a limit by elementarity and the fact that $\mu\in \lim{C^{(n+1)}}.$} Again by elementarity, $\gamma_0$ is $(\alpha,\lambda,a,n+1)$-choiceless supercompact in $V_\mu.$ But $\mu$ is correct enough so that any such witness is a witness in $V$ as well. Thus we have shown that $\gamma_0$ is $(<\!\kappa,<\!\gamma_1\cap\mu,<\!V_{\gamma_1\cap\mu},n+1)$-choiceless supercompact.

    If $m(j)=2,$ then $\gamma_1\cap\mu=\mu,$ and so the above argument finishes the proof. However, if $m(j)>2,$ then we just get that $\gamma_0$ is $(<\!\kappa,<\!\gamma_1,<\!V_{\gamma_1},n+1)$-choiceless supercompact. This will serve as the base case for an inductive argument. For the inductive step, we will prove that, for $\gamma_0,$ being $(<\!\kappa,<\!\gamma_m,<\!V_{\gamma_m},n+1)$-choiceless supercompact implies being $(<\!\kappa,<\!\gamma_{m+1},<\!V_{\gamma_{m+1}},n+1)$-choiceless supercompact, whenever $0<m<m(j)-1.$ Since $\sup\{\gamma_i\mid i\in m(j)\}\cap\mu=\mu$ by definition, we will have shown that $\gamma_0$ is $(<\!\kappa, <\!\mu, <\!V_\mu, n+1)$-choiceless supercompact in the case $m(j)>2$ as well.

    So, assume $\gamma_0$ is $(<\!\kappa,<\!\gamma_m,<\!V_{\gamma_m},n+1)$-choiceless supercompact, where $0<m<m(j)-1.$ This last inequality means that $\gamma_m\in V_\mu,$ so, by correctness of $\mu,$\ $V_\mu$ satisfies that $\gamma_0$ is $(<\!\kappa,<\!\gamma_m,<\!V_{\gamma_m},n+1)$-choiceless supercompact as well. By elementarity of $j,$\ $V_\nu$ satisfies that $\gamma_1$ is $(<\!j(\kappa),<\!\gamma_{m+1},<\!V_{\gamma_{m+1}},n+1)$-choiceless supercompact. Since $\kappa<j(\kappa),$ we actually have that
    \begin{equation}\label{in_V_nu_choiceless_case}
        V_\nu\satisfies ``\gamma_1 \textrm{ is $(<\!\kappa,<\!\gamma_{m+1},<\!V_{\gamma_{m+1}},n+1)$-choiceless supercompact}".
    \end{equation}
    
    Let us show that $V$ also satisfies the formula in \eqref{in_V_nu_choiceless_case}. Fix $\alpha<\kappa,\lambda\in C^{(n+1)}\cap (\gamma_1.\gamma_{m+1}),$ and $a\in V_\lambda.$ As $\lambda\in (C^{(n+1)})^{V_\nu},$ by \eqref{in_V_nu_choiceless_case}, we are given an elementary embedding $k:V_{\bar\lambda}\rightarrow V_\lambda$ such that $\crit k>\alpha,$\ $\bar\lambda<\gamma_1$ is in $(C^{(n+1)})^{V_\nu},$ and $j(\bar a)=a$ for some $\bar a \in V_{\bar\lambda}.$ An application of Lemma~\ref{correctness_trick} with $\bar\lambda<\lambda<\nu$ in place of $\alpha<\beta<\gamma$ yields $\bar\lambda\in C^{(n+1)}.$ Hence, $k$ witnesses that $\gamma_1$ is $(\alpha,\lambda,a,n+1)$\-/choiceless supercompact in $V.$

    We have, by inductive assumption, that $\gamma_0$ is $(<\!\kappa,<\!\gamma_m,<\!V_{\gamma_m},n+1)$-choiceless supercompact, and we just showed that $\gamma_1$ is $(<\!\kappa,<\!\gamma_{m+1},<\!V_{\gamma_{m+1}},n+1)$-choiceless supercompact. The proof of the inductive step will be over by an application of Lemma~\ref{gluing_choiceless_supercompacts}, if we can show that $(\gamma_0,\gamma_1)\cap C^{(n+1)}\neq 0.$ We do this by showing $\gamma_1\in\lim C^{(n+1)}.$ First notice that, since $\sup\{\gamma_i\mid i\in m(j)\}\geq\mu$ and $\mu\in\lim C^{(n+1)},$ there exist $\gamma_s>\gamma_1$ and $\delta\in C^{(n+1)}$ such that $\gamma_1<\delta<\gamma_s.$ Fix any $\beta_0\in\{\beta\mid j(\beta)=\beta\}.$ In $V,$ and hence also in $V_\nu,$ $(\beta_0,\gamma_s)\cap C^{(n+1)}\neq \emptyset$ (as witnessed by $\delta).$ By elementarity of $j$ and correctness of $\mu,$ we get $(\beta_0,\gamma_{s-1})\cap C^{(n+1)}\neq \emptyset$ in $V.$ Repeating this argument finitely many times, we get $(\beta_0,\gamma_{0})\cap C^{(n+1)}\neq \emptyset.$ As $\beta_0$ was arbitrary, we have shown that $\gamma_0\in\lim C^{(n+1)}.$ By correctness of $\mu$ and elementarity of $j,$ we have that $\gamma_1\in\lim (C^{(n+1)})^{V_\nu}.$ Finally, since $\gamma_1<\mu<\nu,$\ $\mu\in C^{(n+1)},$ and $\nu\in C^{(n)},$ we may apply Lemma~\ref{correctness_trick} to show that in fact $\gamma_1\in\lim C^{(n+1)}.$
\end{proof}

We can use the above lemmas to prove the following properties of the least $\alpha$-$n$-choiceless extendible cardinal, for any fixed $\alpha.$

\begin{proposition}\label{properties_of_least_alpha_n_extendible}
    For any $n\geq 1,$ if $\alpha$ is an ordinal and $\gamma>\alpha$ is the least $\alpha$-n-choiceless extendible ordinal, then:
    \begin{enumerate}
        \item There exists $\mu_0$ such that for all $\mu\geq\mu_0$ in $\lim{C^{(n+1)}}$ and for all elementary embeddings $j:V_\mu\rightarrow V_\nu$ with $\nu\in C^{(n)},$\ $\crit j>\alpha$ and $j(\gamma)>\mu,$ we must have $\sup\{\beta\mid j(\beta)=\beta\}=\gamma.$
        \item $\gamma\in \lim C^{(n+1)}.$
        \item There is no cofinal map $f:V_\xi\rightarrow \gamma,$ for any $\xi\leq\alpha.$ In particular, $\cof \gamma > \alpha.$
    \end{enumerate}
\end{proposition}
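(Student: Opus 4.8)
The three clauses build on one another, with (i) carrying essentially all the weight; (ii) and (iii) are short consequences once (i) is in hand.

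Before attacking (i), I would record two elementary facts about $(\alpha,\rho,n)$-choiceless extendibility, valid for any ordinal $\gamma'$. The first is a \emph{downward closure} in the second coordinate: if $\gamma'$ is $(\alpha,\rho,n)$-choiceless extendible, witnessed by $j:V_\rho\to V_\nu$, and $\gamma'<\rho'<\rho$ with $\rho'\in C^{(n)}$, then $j\vert_{V_{\rho'}}$ witnesses $(\alpha,\rho',n)$-choiceless extendibility (one uses $\nu\in C^{(n)}$ and $\rho\in C^{(n)}$ to see $j(\rho')\in C^{(n)}$, and $j(\gamma')>\rho>\rho'$). Hence, since by minimality every $\gamma'\in(\alpha,\gamma)$ fails to be $\alpha$-$n$-choiceless extendible, the set of $\rho\in C^{(n)}$ at which $\gamma'$ \emph{is} $(\alpha,\rho,n)$-choiceless extendible is a bounded initial segment of $C^{(n)}$ above $\gamma'$; let $s(\gamma')$ be its supremum, a genuine ordinal. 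By Replacement the $s(\gamma')$ form a set, so I would set $\mu_0$ to be the least element of $\lim C^{(n+1)}$ exceeding both $\gamma$ and $\sup_{\gamma'<\gamma}s(\gamma')$ (this is definable, so no choice is needed, and $\mu_0$ exists because $\lim C^{(n+1)}$ is unbounded).

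For (i), fix $\mu\geq\mu_0$ in $\lim C^{(n+1)}$ and a witness $j:V_\mu\to V_\nu$ with $\nu\in C^{(n)}$, $\crit j=\kappa>\alpha$, and $j(\gamma)>\mu$. Since $j(\gamma)>\mu$, no ordinal in $(\gamma,\mu)$ is fixed and $\gamma$ itself is moved, so $\gamma_0:=\sup\{\beta\mid j(\beta)=\beta\}\leq\gamma$; as all ordinals below $\kappa$ are fixed, also $\gamma_0\geq\kappa>\alpha$. Suppose for contradiction $\gamma_0<\gamma$. Then $\gamma_0$ is the last point of $j$ (its last sequence is defined because $\gamma_0<\mu$), so Lemma~\ref{creating_choiceless_supercompacts} makes $\gamma_0$ be $(<\!\kappa,<\!\mu,<\!V_\mu,n+1)$-choiceless supercompact, hence $(\alpha,\lambda,a,n+1)$-choiceless supercompact for all $\lambda<\mu$ in $C^{(n+1)}$. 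Running the argument of Lemma~\ref{choiceless_supercompact_to_choiceless_extendible} \emph{inside} $\mu$ — for a given $\rho<\mu$ in $C^{(n)}$ one picks the auxiliary $\lambda\in C^{(n+1)}$ with $\rho<\lambda<\mu$, available as $\mu\in\lim C^{(n+1)}$ — shows $\gamma_0$ is $(\alpha,\rho,n)$-choiceless extendible for every $\rho<\mu$ in $C^{(n)}$. But $\gamma_0\in(\alpha,\gamma)$, so $\mu\geq\mu_0>s(\gamma_0)$, and I may choose $\rho\in C^{(n)}$ with $s(\gamma_0)<\rho<\mu$; by definition of $s(\gamma_0)$ this $\gamma_0$ is \emph{not} $(\alpha,\rho,n)$-choiceless extendible, a contradiction. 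Therefore $\gamma_0=\gamma$.

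For (ii), fix any such $\mu$ and witness $j$; by (i) the fixed points of $j$ are cofinal in $\gamma$, and $\gamma_1:=j(\gamma)>\mu$. Given a fixed point $\beta_0<\gamma$, choose $\delta\in C^{(n+1)}$ with $\gamma<\delta<\mu$, so $\beta_0<\delta<\gamma_1$; since $\nu\in C^{(n)}$ and $\delta\in C^{(n+1)}$ give $V_\nu\satisfies``\delta\in C^{(n+1)}"$, we get $V_\nu\satisfies``(\beta_0,\gamma_1)\cap C^{(n+1)}\neq\emptyset"$. Because $j(\beta_0)=\beta_0$ and $j(\gamma)=\gamma_1$, elementarity pulls this back to $V_\mu\satisfies``(\beta_0,\gamma)\cap C^{(n+1)}\neq\emptyset"$, and correctness of $\mu$ (namely $(C^{(n+1)})^{V_\mu}=C^{(n+1)}\cap\mu$, as $\mu\in C^{(n+1)}$) produces a genuine $C^{(n+1)}$-point strictly between $\beta_0$ and $\gamma$. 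Letting $\beta_0$ range over the cofinal set of fixed points yields $\gamma\in\lim C^{(n+1)}$. For (iii), suppose toward a contradiction that $f:V_\xi\to\gamma$ is cofinal with $\xi\leq\alpha$, and again fix $\mu\geq\mu_0$ with witness $j$; then $\xi<\kappa$ forces $j$ to fix $V_\xi$ pointwise, and $f\in V_\mu$, so by elementarity $j(f):V_\xi\to j(\gamma)$ is cofinal with $j(f)(x)=j(f(x))$ for $x\in V_\xi$. By (i) the fixed points are cofinal in $\gamma$, so each $f(x)<\gamma$ lies below some fixed $\beta<\gamma$, whence $j(f(x))<j(\beta)=\beta<\gamma$; thus $j(\gamma)=\sup_{x}j(f(x))\leq\gamma$, contradicting $j(\gamma)>\mu>\gamma$. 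The final clause $\cof\gamma>\alpha$ then follows by converting a hypothetical cofinal map from an ordinal $\leq\alpha$ into a cofinal map from $V_\alpha$. The genuine obstacle is (i): Lemma~\ref{creating_choiceless_supercompacts} yields only \emph{partial} ($<\!\mu$) extendibility of the last point, so no single witness contradicts minimality of $\gamma$ on its own; it is precisely the uniform threshold $\mu_0$, assembled from the bounded failure-levels $s(\gamma')$, that converts this partial information into a contradiction.
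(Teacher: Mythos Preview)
Your proof is correct, and your treatments of (ii) and (iii) are essentially the paper's (the paper replaces $f$ by the modified map $f^*(x)=$ least fixed point above $f(x)$ rather than bounding each $j(f(x))$ by a fixed point, but this is the same idea).

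The genuine divergence is in (i). The paper argues by contradiction without ever naming $\mu_0$: if no threshold works, then for a proper class of $\mu\in\lim C^{(n+1)}$ there is a bad witness $j$ with last point strictly below $\gamma$; since the pair $(\crit j,\ \sup\{\beta\mid j(\beta)=\beta\})$ always lies in the \emph{set} $\gamma\times\gamma$, a pigeonhole over this proper class produces a single pair $(\kappa,\delta)$ recurring for unboundedly many $\mu$. Lemma~\ref{creating_choiceless_supercompacts} applied at all those $\mu$ then makes $\delta$ \emph{fully} $\alpha$-$(n{+}1)$-choiceless supercompact, so Lemma~\ref{choiceless_supercompact_to_choiceless_extendible} applies verbatim and $\delta<\gamma$ is $\alpha$-$n$-choiceless extendible, contradicting minimality. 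Your route is more explicit: you build $\mu_0$ in advance from the definable failure bounds $s(\gamma')$ via Replacement, and then extract a contradiction from a \emph{single} $\mu$ by reading Lemma~\ref{choiceless_supercompact_to_choiceless_extendible} as a bounded implication (supercompactness below $\mu$ yields extendibility at each $\rho<\mu$, using $\mu\in\lim C^{(n+1)}$ to supply the auxiliary $\lambda$). The paper's version is shorter and sidesteps the downward-closure lemma and the map $\gamma'\mapsto s(\gamma')$; yours gives a concrete, choice-free description of $\mu_0$ and isolates precisely why a single sufficiently large $\mu$ already forces $\gamma_0=\gamma$.
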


\begin{proof}
    Part (i): Suppose that this is not the case. By $\alpha$-$n$-choiceless extendibility of $\gamma,$ we have a proper class of $\mu\in \lim{C^{(n+1)}}$ for which there are elementary embeddings $j:V_\mu\rightarrow V_\nu$ with $\nu\in C^{(n)},$\ $\crit j>\alpha,$\ $j(\gamma)>\mu,$ and $\sup\{\beta\mid j(\beta)=\beta\}<\gamma.$ This means that there exist $\kappa$ and $\delta,$ satisfying $\alpha<\kappa\leq\delta<\gamma,$ and there is a proper class of $\mu\in \lim{C^{(n+1)}}$ for which there are elementary embeddings $j:V_\mu\rightarrow V_\nu$ with $\nu\in C^{(n)},$\ $\crit j=\kappa,$\ $j(\gamma)>\mu,$ and $\sup\{\beta\mid j(\beta)=\beta\}=\delta.$ Thus, for any such $j$ the ordinal $\delta$ is the last point. Hence, by Lemma~\ref{creating_choiceless_supercompacts}, we must have that $\delta$ is $\alpha$-$n+1$-choiceless supercompact. By Lemma~\ref{choiceless_supercompact_to_choiceless_extendible}, we then have that $\delta<\gamma$ is $\alpha$-$n$-choiceless extendible. But this cannot be by minimality of $\gamma.$

    Part (ii): Fix an elementary embedding $j:V_\mu\rightarrow V_\nu$ as in (i). Let $\xi<\gamma$ be arbitrary. Fix $\beta$ such that $\xi<\beta<\gamma$ and $j(\beta)=\beta.$ Notice that $V_\nu$ satisfies $(\beta,j(\gamma))\cap C^{(n+1)}\neq \emptyset$ (as witnessed by $\mu).$ By elementarity of $j$ and correctness of $\mu,$ we have that $(\beta,\gamma)\cap C^{(n+1)}\neq \emptyset.$

    Part (iii): Assume towards a contradiction that $f:V_\xi\rightarrow \gamma$ is cofinal for some $\xi\leq\alpha.$ Fix $j$ as in (i). Define $f^*:V_\xi\rightarrow \gamma$ so that $f^*(x)$ is the least $\beta$ above $f(x)$ such that $j(\beta)=\beta.$ Then, $f^*$ must also be a cofinal map in $\gamma.$ Hence, by elementarity of $j,$ the map $j(f^*):V_\xi\rightarrow j(\gamma)$ must be a cofinal map in $j(\gamma).$ But that cannot be, since $j(\gamma)>\gamma,$ while $j(f^*)(x)=j(f^*)(j(x))= j(f^*(x)) = f^*(x),$ for all $x\in V_\xi.$
\end{proof}

\begin{definition}[\cite{cn-cardinals}]\label{cn_extendibles}
    For each $n\geq0,$ a cardinal $\kappa$ is said to be \emph{$\mu$-$C^{(n)}$\-/extendible} for some ordinal $\mu>\kappa$ in $C^{(n)}$ iff there is a $\nu>\mu$ in $C^{(n)}$ and an elementary embedding $j:V_\mu \rightarrow V_\nu$ such that $\crit j=\kappa$ and $j(\kappa)>\mu.$
    $\kappa$ is said to be \emph{${<}\delta$-$C^{(n)}$\-/extendible} iff it is $\mu$-$C^{(n)}$\-/extendible for all $\kappa<\mu<\delta$ in $C^{(n)}.$ Finally, $\kappa$ is said to be \emph{$C^{(n)}$\-/extendible} iff it is $\mu$-$C^{(n)}$\-/extendible for all $\mu>\kappa$ in $C^{(n)}.$\footnote{It is easy to see that this notion is equivalent to Bagaria's $C^{(n)}$-extendibility \cite{cn-cardinals} by an argument similar to the one leading to Theorem~\ref{equivalence_of_choiceless_extendibles_and_choiceless_supercompacts}.}
\end{definition}

\begin{theorem}\label{vp_no_extendibles}
    For $n\geq1,$ if $\VP(\mathbf{\Pi_{n+1}})$ holds while there are no $C^{(n)}$-extendible cardinals above some ordinal $\xi,$ then there are unboundedly many rank-Berkeley cardinals.
\end{theorem}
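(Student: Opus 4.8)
The plan is to reduce to producing, for every $\zeta\geq\xi$, a single $\zeta$-proto rank-Berkeley cardinal; Proposition~\ref{rank_berkeley_from_proto} then upgrades the least such cardinal to a genuine rank-Berkeley cardinal, which is moreover ${>}\zeta$, so letting $\zeta$ grow yields unboundedly many. By Theorem~\ref{equivalence_of_vp_and_n_choiceless_extendibles}, $\VP(\mathbf{\Pi_{n+1}})$ gives a proper class of $n$-choiceless extendible cardinals, so for each $\zeta\geq\xi$ the least $\zeta$-$n$-choiceless extendible ordinal $\gamma$ exists and satisfies Proposition~\ref{properties_of_least_alpha_n_extendible}: for cofinally many $\mu\in\lim C^{(n+1)}$ every witnessing $j\colon V_\mu\to V_\nu$ (with $\nu\in C^{(n)}$, $\crit j>\zeta$, $j(\gamma)>\mu$) has $\sup\{\beta\mid j(\beta)=\beta\}=\gamma$.

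First I would use the hypothesis that there are no $C^{(n)}$-extendible cardinals above $\xi$ to control the critical points. Since $\gamma>\xi$, $\gamma$ is not $C^{(n)}$-extendible, so by Definition~\ref{cn_extendibles} fix $\mu^*\in C^{(n)}$ witnessing this failure. For any witnessing $j\colon V_\mu\to V_\nu$ with $\mu>\mu^*$, the critical point cannot equal $\gamma$: otherwise $j\vert_{V_{\mu^*}}$ would witness $\mu^*$-$C^{(n)}$-extendibility of $\gamma$ (its target $j(\mu^*)$ lies in $C^{(n)}$ by elementarity and $\nu\in C^{(n)}$); and it cannot exceed $\gamma$, since $j(\gamma)>\mu>\gamma$ forces $\crit j\leq\gamma$. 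Hence $\crit j<\gamma$ for cofinally many $\mu$. Combining this with $\sup\{\beta\mid j(\beta)=\beta\}=\gamma$, the critical sequence $\crit j=\kappa_0<\kappa_1<\cdots$, $\kappa_{i+1}=j(\kappa_i)$, stays below $\gamma$ (each $\kappa_i$ lies below a fixed point $<\gamma$), and its supremum $\delta_j$ is the least fixed point of $j$ above $\crit j$; it lies in $(\zeta,\gamma)$ and, being a limit of the inaccessibles $\kappa_i$, is a cardinal. As $\mu$ ranges over a cofinal class while $\delta_j$ ranges over the set $(\zeta,\gamma)$, some fixed cardinal $\delta^\ast\in(\zeta,\gamma)$ equals $\delta_j$ for cofinally many $\mu$.

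The heart of the argument is to show this $\delta^\ast$ is $\zeta$-proto rank-Berkeley, i.e.\ that for every $\lambda>\delta^\ast$ there is an embedding in $\mathcal{E}_\lambda^{\delta^\ast}$ with critical point $>\zeta$. Fix such a $\lambda$ and choose $\mu>\lambda$ from the cofinal class with $\delta_{j}=\delta^\ast$, where $j\colon V_\mu\to V_\nu$ is the associated embedding. For every $\eta\in(\delta^\ast,\gamma)$ there is an endomorphism in $\mathcal{E}_\eta^{\delta^\ast}$ of critical point $\crit j>\zeta$: for a fixed point $\beta$ of $j$ with $\eta<\beta<\gamma$ (these are cofinal in $\gamma$) the restriction $j\vert_{V_\beta}$ is a genuine elementary self-embedding of $V_\beta$ fixing $\delta^\ast$ --- using that the satisfaction relation of the set $V_\beta$ is absolute and $j(\beta)=\beta$ --- and Lemma~\ref{iterate_to_fix} lets us iterate it to fix $\eta$ and restrict to $V_\eta$. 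All these witnesses have rank below $\gamma<\mu$, so they already lie in $V_\mu$, whence $V_\mu$ satisfies ``for all $\eta\in(\delta^\ast,\gamma)$ there is $k\in\mathcal{E}_\eta^{\delta^\ast}$ with $\crit k>\zeta$.'' Applying the elementarity of $j$, which fixes $\zeta$ and $\delta^\ast$, $V_\nu$ satisfies the same statement with $\gamma$ replaced by $j(\gamma)$; since $\delta^\ast<\lambda<\mu<j(\gamma)$, this produces $k\in V_\nu\cap\mathcal{E}_\lambda^{\delta^\ast}$ with $\zeta<\crit k<\delta^\ast$, which is a true self-embedding of $V_\lambda$ because $V_\nu$ computes satisfaction of $V_\lambda$ correctly.

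The main obstacle is exactly the levels $\lambda\geq\gamma$: the self-embeddings obtained directly from $j$ live only at fixed points below $\gamma$, so they do not by themselves witness rank-Berkeleyness at high $\lambda$. The device above circumvents this by reflecting the ``proto rank-Berkeley below $\gamma$'' property upward through $j$ to the interval $(\delta^\ast,j(\gamma))$, with the pigeonhole step guaranteeing that a single $\delta^\ast$ is fixed by embeddings with arbitrarily large $j(\gamma)$. The delicate points to verify carefully are that the restrictions $j\vert_{V_\beta}$ are fully elementary self-maps, that the class of witnessing embeddings is absolute between $V$, $V_\mu$, and $V_\nu$ (so that the displayed statements transfer), and that $\delta^\ast$ is genuinely a cardinal so that Proposition~\ref{rank_berkeley_from_proto} applies.
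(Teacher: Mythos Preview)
Your argument is correct and takes a genuinely different route from the paper's. The paper fixes a \emph{single} witnessing embedding $j:V_\mu\to V_\nu$ with $\crit j<\gamma$ and $\sup\{\beta\mid j(\beta)=\beta\}=\gamma$, lets $\lambda$ be the first fixed point above $\crit j$, and then finds a fixed point $\delta\in C^{(n+1)}\cap(\lambda,\gamma)$ (this uses parts (ii) and (iii) of Proposition~\ref{properties_of_least_alpha_n_extendible}: $\gamma\in\lim C^{(n+1)}$ and $\cof\gamma>\omega$, so the two $\omega$-clubs intersect). Inside $V_\delta$ one argues by a least-counterexample definability trick that $\lambda$ is rank-Berkeley, and $\Sigma_2$-correctness of $V_\delta$ transfers this to $V$. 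No pigeonhole, no Proposition~\ref{rank_berkeley_from_proto}, no explicit appeal to Lemma~\ref{iterate_to_fix}.

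Your approach instead trades the correctness step for a pigeonhole: you stabilise the first fixed point $\delta^\ast$ across a proper class of $\mu$'s, manufacture self-embeddings of every $V_\eta$ with $\eta\in(\delta^\ast,\gamma)$ from a single $j$, and then use elementarity of that same $j$ to push the interval $(\delta^\ast,\gamma)$ up to $(\delta^\ast,j(\gamma))\ni\lambda$. This is more hands-on and gives only $\zeta$-proto rank-Berkeleyness, which you then upgrade via Proposition~\ref{rank_berkeley_from_proto}. The pigeonhole is legitimate in $\ZF$: if every $\delta^\ast\in(\zeta,\gamma)$ occurred for only a set of $\mu$'s, Replacement would bound the whole class. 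One small point to tighten: when you pick a fixed point $\beta$ of $j$ with $\eta<\beta<\gamma$ and invoke Lemma~\ref{iterate_to_fix}, you need $\beta$ to be a limit ordinal; this is harmless since limits of fixed points are fixed points and these are cofinal in $\gamma$, but it should be said. The paper's route is shorter and leans on correctness; yours is more constructive and makes the witnessing embeddings at every level explicit.
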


\begin{proof}
    Let $\alpha>\xi$ be an arbitrary ordinal, and let $\gamma>\alpha$ be the least $\alpha$-$n$-choiceless extendible ordinal. By Proposition~\ref{properties_of_least_alpha_n_extendible}, part (i), there are elementary embeddings $j:V_\mu \rightarrow V_\nu$ satisfying $\crit j >\alpha,$\ $j(\gamma)>\mu,$ and $\sup\{\beta\mid j(\beta)=\beta\}=\gamma,$ for arbitrarily high $\mu,\nu\in C^{(n)}.$ If all such $j$ have critical points equal to $\gamma,$ then clearly $\gamma$ would be a $C^{(n)}$-extendible cardinal, contrary to the fact that there are no $C^{(n)}$-extendible cardinals above $\xi.$ Hence, there must be some elementary embedding $j:V_\mu\rightarrow V_\nu$ satisfying $\alpha<\crit j <\gamma,$\ $j(\gamma)>\mu,$ and $\sup\{\beta\mid j(\beta)=\beta\}=\gamma,$ for some $\mu,\nu\in C^{(n)}.$ Define $\lambda<\gamma$ as the first fixed point of $j$ above $\crit j.$

    By Proposition~\ref{properties_of_least_alpha_n_extendible}, part (iii), cofinality of $\gamma$ is greater than $\omega.$ Hence, the set $\{\beta \mid j(\beta)=\beta\}$ forms an $\omega$-club below $\gamma.$ Also, by part (ii) of the same proposition, $C^{(n+1)}\cap\gamma$ must form an $\omega$-club below $\gamma$ too. Therefore, we see that $C^{(n+1)}\cap\{\beta \mid j(\beta)=\beta\}$ is nonempty and, in fact, unbounded in $\gamma.$ Let $\delta>\lambda$ be an ordinal in this intersection and notice that we now have an elementary embedding $j\vert_{V_\delta}:V_\delta\rightarrow V_\delta.$

    We claim that $V_\delta\satisfies ``\lambda\ \text{is rank-Berkeley}".$ Otherwise, there would be a least counterexample $\sigma\in V_\delta.$ $\sigma$ is definable from $\lambda$ in $V_\delta,$ and since $j\vert_{V_\delta}$ fixes $\lambda,$ it must also fix $\sigma.$ But, the restriction of $j\vert_{V_\delta}$ to $V_\sigma$ will give a contradiction, so the claim is correct. Now, as $\delta\in C^{(n+1)}\subset C^{(2)}$ and being rank-Berkeley is a $\Pi_2$ statement, the cardinal $\lambda$ must be rank-Berkeley in $V$ too. As $\alpha$ was chosen arbitrarily and $\lambda>\alpha,$ we get that there are arbitrarily high rank-Berkeley cardinals.
\end{proof}

\begin{corollary}\label{equiconsistency_vp_no_extendibles_with_many_rank_berkeleys}
    For $n\geq1,$ the following theories are equiconsistent:
    \begin{enumerate}
        \item $\ZF + \boldVP + ``\forall \kappa (\kappa \textrm{\textup{ is not $C^{(0)}$\-/extendible}})"$
        \item $\ZF + \VP(\mathbf{\Pi_{n+1}}) + ``\forall \kappa (\kappa \textrm{\textup{ is not $C^{(n)}$\-/extendible}})"$
        \item $\ZF + \VP(\mathbf{\Pi_{n+1}}) + ``\exists \xi \forall \kappa>\xi (\kappa \textrm{\textup{ is not $C^{(n)}$\-/extendible}})"$
        \item $\ZF + ``\textrm{\textup{There are unboundedly many rank-Berkeley cardinals}}"$
    \end{enumerate}
\end{corollary}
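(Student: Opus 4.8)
The four theories form a cycle in which three of the links are immediate. First, every model of (i) is a model of (ii): $\boldVP$ trivially entails each $\VP(\mathbf{\Pi_{n+1}}),$ and every $C^{(n)}$\-/extendible cardinal is $C^{(0)}$\-/extendible (given $\mu,$ pick $\mu'>\mu$ in $C^{(n)},$ take a witness $j\colon V_{\mu'}\to V_{\nu'}$ and restrict it to $V_\mu$), so ``no $C^{(0)}$\-/extendible'' yields ``no $C^{(n)}$\-/extendible''. Second, every model of (ii) is a model of (iii) (take $\xi=0$). Third, by Theorem~\ref{vp_no_extendibles} every model of (iii) has unboundedly many rank-Berkeley cardinals, hence is a model of (iv). Thus $\con{(i)}\Rightarrow\con{(ii)}\Rightarrow\con{(iii)}\Rightarrow\con{(iv)}.$

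It remains to prove $\con{(iv)}\Rightarrow\con{(i)},$ which is where the real work lies. Let $M\models$ (iv). I would first note that $\boldVP$ holds in $M$: any similarity type $\tau$ is a set, so $\tau\in V_\delta$ for some rank-Berkeley $\delta,$ and Proposition~\ref{rank_berkeley_to_vp} gives $\VP^\delta(\OR);$ letting $\delta$ range over the (unboundedly many) rank-Berkeley cardinals yields all of $\VP(\OR),$ whence $\boldVP$ by Corollary~\ref{vp_to_boldvp}. The task then reduces to exhibiting inside $M$ an ordinal $\lambda$ with $V_\lambda\models \ZF+\boldVP$ together with ``there is no $C^{(0)}$\-/extendible cardinal'', since such a set model witnesses $\con{(i)}.$

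For the choice of $\lambda$ I would imitate the proof of Theorem~\ref{vp_no_extendibles}. Fix $n\geq1$ and an ordinal $\alpha,$ and let $\gamma>\alpha$ be the least $\alpha$-$n$-choiceless extendible ordinal (these exist, since by Theorem~\ref{equivalence_of_vp_and_n_choiceless_extendibles} the class of $n$-choiceless extendibles is proper). By Proposition~\ref{properties_of_least_alpha_n_extendible} we have $\gamma\in\lim C^{(n+1)},$ so $V_\gamma\prec_{\Sigma_2}V$ (and, one checks, $V_\gamma\models\ZF$) and $\cof\gamma>\alpha;$ set $\lambda=\gamma.$ On the one hand, no $\alpha$-$n$-choiceless extendible, and hence no $C^{(n)}$\-/extendible cardinal, lies below $\gamma,$ so the fixed-point analysis of Theorem~\ref{vp_no_extendibles}, applied to the witnessing embeddings $j\colon V_\mu\to V_\nu$ furnished by Proposition~\ref{properties_of_least_alpha_n_extendible}(i) (whose fixed-point suprema equal $\gamma$), produces rank-Berkeley cardinals cofinally in $\gamma;$ as ``rank-Berkeley'' is $\Pi_2$ and $\gamma\in C^{(2)},$ these remain rank-Berkeley in $V_\gamma,$ so $V_\gamma\models$ (iv) and therefore $V_\gamma\models\boldVP$ by the argument of the previous paragraph carried out internally. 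On the other hand, I would rule out internal $C^{(0)}$\-/extendibles by minimality of $\gamma,$ aiming to convert any such cardinal, via Lemma~\ref{creating_choiceless_supercompacts} and Lemma~\ref{choiceless_supercompact_to_choiceless_extendible}, into an $\alpha$-$n$-choiceless extendible ordinal strictly below $\gamma.$

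The main obstacle is exactly this last point. Securing $\boldVP$ forces, by Theorem~\ref{equivalence_of_vp_and_n_choiceless_extendibles}, a proper class of $n$-choiceless extendibles inside the cut, and the naive cuts at rank-Berkeley cardinals tend to be first fixed points of nontrivial self-embeddings $j\colon V_\lambda\to V_\lambda$ whose critical sequence has supremum $\lambda;$ such embeddings reflect a proper class of \emph{internal} extendible cardinals into $V_\lambda$ and so destroy (i). Cutting at the \emph{least} $\alpha$-$n$-choiceless extendible $\gamma$ --- where Proposition~\ref{properties_of_least_alpha_n_extendible}(i) forces $\sup\{\beta\mid j(\beta)=\beta\}=\gamma$ for every witnessing embedding, so that no critical sequence accumulates to a first fixed point below $\gamma$ --- is precisely what blocks this phenomenon. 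The delicate remaining step, and the crux of the whole argument, is to verify rigorously that genuine $C^{(0)}$\-/extendibility fails in $V_\gamma,$ thereby completing the cycle and establishing the equiconsistency of all four theories.
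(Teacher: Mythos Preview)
Your treatment of the implications (i)$\Rightarrow$(ii)$\Rightarrow$(iii)$\Rightarrow$(iv) matches the paper's and is fine. The direction (iv)$\Rightarrow$(i), however, is where your proposal diverges substantially from the paper, and your route is both incomplete and unnecessarily intricate.

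The paper's argument for (iv)$\Rightarrow$(i) is a one-line reduction: work in a model of (iv) and, if necessary, pass to $V_\kappa$ for the \emph{least inaccessible $\kappa$ that is a limit of rank-Berkeley cardinals}. In the resulting model $\boldVP$ holds (Proposition~\ref{rank_berkeley_to_vp} and Corollary~\ref{vp_to_boldvp}, exactly as you noted), and no $C^{(0)}$\-/extendible can exist because any such cardinal is automatically an inaccessible limit of rank-Berkeley cardinals. The last point is the only thing to check: if $\tau$ is $C^{(0)}$\-/extendible and $\delta>\tau$ is rank-Berkeley, pick $\mu>\delta$ in $C^{(2)}$, take $j\colon V_\mu\to V_\nu$ with $\crit j=\tau$ and $j(\tau)>\mu$, note $V_\nu\models``\delta$ is rank-Berkeley'' and $\delta<j(\tau)$, reflect via $j$ to get a rank-Berkeley below $\tau$ in $V_\mu$, and use $\mu\in C^{(2)}$ to conclude it is genuinely rank-Berkeley. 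That is the whole proof.

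Your proposed cut at $\gamma=$ the least $\alpha$-$n$-choiceless extendible ordinal runs into three concrete problems. First, $\gamma\in\lim C^{(n+1)}$ does not by itself yield $V_\gamma\models\ZF$; Proposition~\ref{properties_of_least_alpha_n_extendible}(iii) only rules out cofinal maps from $V_\xi$ for $\xi\leq\alpha$, which is far from full inaccessibility. Second, the fixed-point analysis of Theorem~\ref{vp_no_extendibles} produces \emph{one} rank-Berkeley cardinal in $(\alpha,\gamma)$, not cofinally many; varying $\alpha$ changes $\gamma$, so you cannot iterate inside a fixed $\gamma$ without further argument. Third --- and you acknowledge this yourself --- ruling out internal $C^{(0)}$\-/extendibles in $V_\gamma$ via minimality of $\gamma$ is not carried out, and it is not clear that an internal $C^{(0)}$\-/extendible in $V_\gamma$ can be promoted to an $\alpha$-$n$-choiceless extendible ordinal below $\gamma$ in $V$. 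The paper sidesteps all three issues by choosing the cut point on grounds (inaccessibility, limit of rank-Berkeleys) that directly encode both $\ZF$ and the obstruction to $C^{(0)}$\-/extendibility.
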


\begin{proof}
    (i) to (ii) and (ii) to (iii) are trivial. (iii) to (iv) is by Theorem~\ref{vp_no_extendibles}.

    From (iv) to (i): Work in (iv). We can assume that there is no inaccessible cardinal $\kappa$ that is a limit of rank-Berkeley cardinals by simply passing to $V_\kappa,$ where $\kappa$ is the least such cardinal if it exists. It is easy to see that any $C^{(0)}$-extendible cardinal is both inaccessible and a limit of rank-Berkeley cardinals, and so cannot exist. Meanwhile, $\boldVP$ holds, by Proposition~\ref{rank_berkeley_to_vp} and Corollary~\ref{vp_to_boldvp}.
\end{proof}

\begin{definition}\label{definition_or_is_mahlo}
    We define the axiom schema $``\OR \textrm{\textup{ is Mahlo}}"$ to mean the following: Every definable, with parameters, proper club class of ordinals has an inaccessible cardinal. The negation of this, $``\OR \textrm{\textup{ is not Mahlo}},"$ will be a single axiom stating that some definable, with parameters, proper club class of ordinals contains no inaccessible cardinal.
\end{definition}

\begin{theorem}\label{or_is_not_mahlo_proves_unbounded_rank_berkeleys}
If $\boldVP$ holds but $\OR$ is not Mahlo, then there are unboundedly many rank-Berkeley cardinals.
\end{theorem}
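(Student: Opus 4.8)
The plan is to deduce this from Theorem~\ref{vp_no_extendibles}, which already produces unboundedly many rank-Berkeley cardinals from the hypothesis that, for some $n\geq 1$, $\VP(\mathbf{\Pi_{n+1}})$ holds while no $C^{(n)}$-extendible cardinal exists above a fixed ordinal $\xi$. Since $\boldVP=\VP(V)$ trivially implies $\VP(\mathbf{\Pi_{n+1}})$ for every $n$, the whole task reduces to exhibiting a single $n\geq 1$ for which the $C^{(n)}$-extendible cardinals are bounded. I would extract this bound directly from the failure of Mahloness, by a reflection-into-the-club argument.

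First I would fix, using the hypothesis that $\OR$ is not Mahlo (Definition~\ref{definition_or_is_mahlo}), a definable-with-parameters proper club class $D$ of ordinals containing no inaccessible cardinal; write $D=\{x\mid \varphi(x,p)\}$ for a parameter $p$ and a formula $\varphi$. I would then choose $n\geq 1$ at least one level above the complexity of $\varphi$, so that for an ordinal $\kappa>\rank(p)$ in $C^{(n)}$ the relation $\varphi(\cdot,p)$ is computed correctly below $\kappa$ and the (one-quantifier-higher) assertion ``$D$ is unbounded'' reflects down to $V_\kappa$. Set $\xi:=\rank(p)$. I claim there is no $C^{(n)}$-extendible cardinal above $\xi$.

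The key step is the reflection argument. Suppose $\kappa>\xi$ were $C^{(n)}$-extendible in the sense of Definition~\ref{cn_extendibles}. Then $\kappa\in C^{(n)}$, so $V_\kappa\prec_{\Sigma_n}V$; by the choice of $n$ this yields $D\cap\kappa=D^{V_\kappa}$, and since $D$ is unbounded in $V$, that $D\cap\kappa$ is unbounded in $\kappa$. As $D$ is closed, $\kappa\in D$. On the other hand, $C^{(n)}$-extendibility gives a nontrivial elementary embedding $j:V_\mu\rightarrow V_\nu$ with $\crit j=\kappa$, and the critical point of any such embedding is inaccessible. This contradicts the assumption that $D$ contains no inaccessible cardinal, so no such $\kappa$ exists, and the $C^{(n)}$-extendible cardinals are bounded by $\xi$.

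With $\VP(\mathbf{\Pi_{n+1}})$ in hand (from $\boldVP$) and the $C^{(n)}$-extendible cardinals bounded by $\xi$, Theorem~\ref{vp_no_extendibles} immediately delivers unboundedly many rank-Berkeley cardinals, finishing the proof. The step I expect to be the main obstacle is the complexity bookkeeping in the second and third paragraphs: one must check that a single level of $\Sigma_n$-correctness simultaneously computes $D\cap\kappa$ correctly \emph{and} reflects the unboundedness of $D$, and that the closure of $D$ is genuinely usable to conclude $\kappa\in D$. Once the correct $n$ relative to $\varphi$ is pinned down, the reflection and the appeal to Theorem~\ref{vp_no_extendibles} are routine.
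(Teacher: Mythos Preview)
Your argument is correct. The core idea—choose $n$ above the complexity of the witnessing club, observe that any sufficiently $\Sigma_n$-correct cardinal reflects the club and hence lands inside it, and conclude that the relevant cardinal cannot be inaccessible—is exactly the engine the paper uses. The complexity bookkeeping you flag as the obstacle is indeed the only point requiring care, and it goes through: since a $C^{(n)}$\-/extendible cardinal is in $C^{(n+2)}$ (Proposition~\ref{correctness_of_choiceless_extendibles}), taking $n$ one level above the complexity of $\varphi$ is more than enough to make both $D\cap\kappa$ and the unboundedness of $D$ reflect.

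The organization differs from the paper's. You establish the full hypothesis of Theorem~\ref{vp_no_extendibles} (no $C^{(n)}$-extendibles above $\xi$) and then invoke that theorem as a black box. The paper instead runs the reflection argument directly on the least $\alpha$-$m$-choiceless extendible ordinal $\gamma$: it shows $\gamma\in\lim C^{(m+1)}$ via Proposition~\ref{properties_of_least_alpha_n_extendible}(ii), hence $\gamma$ lies in the club $C$ and is not inaccessible, so $\crit j<\gamma$; it then recycles the last two paragraphs of the proof of Theorem~\ref{vp_no_extendibles} rather than citing the theorem. Your route is more modular and arguably cleaner, since it isolates the new content (the reflection step) from what has already been proved; the paper's route avoids restating the hypothesis of Theorem~\ref{vp_no_extendibles} at the cost of repeating part of its proof. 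Both are short and neither gains real generality over the other.
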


\begin{proof}
    Fix $C$ and $n$ such that $C$ is a $\mathbf{\Sigma_n}$ club class of non-inaccessible ordinals. Let $m\geq \max\{n,1\}$ and, for some arbitrary ordinal $\alpha,$ let $\gamma$ be the least $\alpha$-$m$-choiceless extendible ordinal. Using Proposition~\ref{properties_of_least_alpha_n_extendible} and $\alpha$-$m$-choiceless extendibility of $\gamma,$ fix an elementary $j:V_\mu \rightarrow V_\nu$ with $\alpha<\crit{j},$ $j(\gamma)>\mu,$ and $\sup\{\beta \mid j(\beta)=\beta\}=\gamma.$
    
    Since $\gamma\in\lim C^{(m+1)}$ and $m\geq n,$ we get that $C\cap \gamma$ is unbounded in $\gamma,$ and so $\gamma\in C.$ As members of $C$ are not inaccessible, we have $\crit{j}< \gamma.$ Let $\lambda=\min\{\beta>\crit j\mid j(\beta)=\beta\}.$ Now, we continue as in paragraphs 2-3 of the proof of Theorem~\ref{vp_no_extendibles}, with $m$ replacing $n.$
\end{proof}

\begin{corollary}\label{equiconsistency_theories_or_is_not_mahlo}
    The following theories are equiconsistent:
    \begin{enumerate}
        \item $\ZF + \boldVP + ``\OR \textrm{\textup{ is not Mahlo}}"$
        \item $\ZF + ``\textrm{\textup{There are unboundedly many rank-Berkeley cardinals}}"$
    \end{enumerate}
\end{corollary}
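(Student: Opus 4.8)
The plan is to prove the equiconsistency of Corollary~\ref{equiconsistency_theories_or_is_not_mahlo} by treating each direction separately, with the bulk of the work already done by Theorem~\ref{or_is_not_mahlo_proves_unbounded_rank_berkeleys}. For the direction (i)~$\to$~(ii) at the level of consistency, I would simply invoke Theorem~\ref{or_is_not_mahlo_proves_unbounded_rank_berkeleys}: any model of $\ZF + \boldVP + \text{``}\OR$ is not Mahlo'' already contains unboundedly many rank-Berkeley cardinals, so it is a model of (ii) outright. Hence $\con{\ulcorner(\text{i})\urcorner}$ implies $\con{\ulcorner(\text{ii})\urcorner}$ trivially, since every model of (i) is a model of (ii).

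The substantive direction is (ii)~$\to$~(i). Here I would mimic the structure of the proof of the direction (iv)~$\to$~(i) in Corollary~\ref{equiconsistency_vp_no_extendibles_with_many_rank_berkeleys}. Working in a model of (ii), I would first arrange that $\OR$ is not Mahlo by passing to an appropriate initial segment. The natural candidate is to let $\kappa$ be the least inaccessible cardinal that is a limit of rank-Berkeley cardinals, if one exists, and pass to $V_\kappa$; in $V_\kappa$ there are still unboundedly many rank-Berkeley cardinals (since $\kappa$ is their limit), so $\boldVP$ holds by Proposition~\ref{rank_berkeley_to_vp} together with Corollary~\ref{vp_to_boldvp}. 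The point is that, having removed all inaccessible limits of rank-Berkeley cardinals, the class of rank-Berkeley cardinals (or a club class derived from it) witnesses that $\OR$ is not Mahlo, because any inaccessible in a club of rank-Berkeley cardinals would be an inaccessible limit of rank-Berkeley cardinals, which we have ruled out.

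The step I expect to be the main obstacle is the careful construction of the definable club class of non-inaccessible ordinals witnessing $``\OR$ is not Mahlo'' in the reduced model. The set of rank-Berkeley cardinals itself need not be club, so I would instead take its limit points or the closure of the class of rank-Berkeley cardinals, which is a definable club class; the delicate part is to verify that no member of this club is inaccessible. A rank-Berkeley cardinal is itself inaccessible (being the critical point of nontrivial elementary embeddings $j:V_\lambda \to V_\lambda$, as noted before Theorem~\ref{equiconsistency_theories_berkeley_and_finite_vp}), so one must be sure to build the club out of limits that avoid the rank-Berkeley cardinals themselves, or argue that after passing to $V_\kappa$ every relevant limit point of the club fails to be inaccessible precisely because it would otherwise be an inaccessible limit of rank-Berkeley cardinals. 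Once this club is in hand, $``\OR$ is not Mahlo'' follows from Definition~\ref{definition_or_is_mahlo}, and together with $\boldVP$ we obtain a model of (i), completing the equiconsistency.
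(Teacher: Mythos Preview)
Your plan matches the paper's proof: Theorem~\ref{or_is_not_mahlo_proves_unbounded_rank_berkeleys} handles (i)$\to$(ii), and for (ii)$\to$(i) the paper likewise passes to $V_\kappa$ for the least inaccessible limit $\kappa$ of rank-Berkeley cardinals (if any) and then takes the club $C$ of limits of rank-Berkeley cardinals, which contains no inaccessibles by construction. One correction to your worry: a rank-Berkeley cardinal $\delta$ is \emph{not} the critical point of its witnessing embeddings---Definition~\ref{def:rank_berkeley} requires $\crit j<\delta$---so there is no a priori reason $\delta$ is inaccessible, and in any case your second proposed resolution (any inaccessible in $C$ would be an inaccessible limit of rank-Berkeley cardinals, which has been ruled out) is exactly the argument the paper uses.
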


\begin{proof}
    (i) to (ii): Theorem~\ref{or_is_not_mahlo_proves_unbounded_rank_berkeleys}. (ii) to (i): Suppose there are unboundedly many rank-Berkeley cardinals. Just as in the proof of case (ii) to (iii) of Theorem~\ref{equiconsistency_vp_no_extendibles_with_many_rank_berkeleys}, $\boldVP$ holds and we can assume that there is no inaccessible cardinal $\kappa$ that is a limit of rank-Berkeley cardinals. Now, the club class $C$ consisting of limits of rank-Berkeley cardinals contains no inaccessible cardinals, hence $\OR$ is not Mahlo.
\end{proof}

\printbibliography

\end{document}